\definecolor{gr}{rgb}   {0.,   0.69,   0.23 }
\definecolor{bl}{rgb}   {0.,   0.5,   1. }
\definecolor{mg}{rgb}   {0.85,  0.,    0.85}
\definecolor{yl}{rgb}   {0.8,  0.7,   0.}
\definecolor{or}{rgb}  {0.7,0.2,0.2}
\newtheorem{theorem}{Theorem} [section]
\newtheorem{lemma}[theorem]{Lemma}
\newtheorem{proposition}[theorem]{Proposition}
\newtheorem{remark}[theorem]{Remark}
\DeclareMathOperator*{\supp}{supp}
\newcommand{\noi}{\noindent}
\newcommand{\Z}{\mathbb{Z}}
\newcommand{\R}{\mathbb{R}}
\newcommand{\T}{\mathbb{T}}
\let\P= \undefined
\newcommand{\P}{\mathbf{P}}
\newcommand{\E}{\mathbb{E}}
\newcommand{\F}{\mathcal{F}}
\newcommand{\al}{\alpha}
\newcommand{\be}{\beta}
\newcommand{\dl}{\delta}
\newcommand{\nb}{\nabla}
\newcommand{\eps}{\varepsilon}
\newcommand{\kk}{\kappa}
\newcommand{\g}{\gamma}
\newcommand{\s}{\sigma}
\newcommand{\ft}{\widehat}
\newcommand{\wt}{\widetilde}
\newcommand{\cj}{\overline}
\newcommand{\dt}{\partial_t}
\newcommand{\dd}{\partial}
\newcommand{\ta}{\theta}
\renewcommand{\l}{\ell}
\renewcommand{\o}{\omega}
\renewcommand{\O}{\Omega}
\newcommand{\les}{\lesssim}
\newcommand{\jb}[1]
{\langle #1 \rangle}
\renewcommand{\b}{\be}
\newcommand{\ind}{\mathbf 1}
\newcommand{\N}{\mathbb{N}}
\newcommand{\J}{\mathcal{J}}
\newcommand{\EE}{\mathcal{E}}
\renewcommand{\H}{\vec{H}}
\newtheorem*{ackno}{Acknowledgements}
\numberwithin{equation}{section}
\numberwithin{theorem}{section}
\newcommand{\NLW}{\textup{NLW}}
\newcommand{\Q}{\mathbb{Q}}
\newcommand{\PP}{\mathbb{P}}
\newcommand{\HH}{\mathscr{E}}
\newcommand{\C}{\mathcal{C}}
\DeclareMathOperator{\Law}{Law}
\newcommand{\ZZ}{\mathcal{Z}}
\newcommand{\muu}{\vec{\mu}}
\newcommand{\nuu}{\vec{\nu}}
\newcommand{\rhoo}{\vec{\rho}}
\newcommand{\rhooo}{\vec{\wt{\rho}}}
\newcommand{\B}{B}
\newcommand{\hra}{\hookrightarrow}
\newcommand{\W}{\mathcal{W}}
\newcommand{\U}{\mathcal{U}}
\newcommand{\dr}{\theta}
\newcommand{\Dr}{\Theta}
\newcommand{\Ha}{\mathbb{H}_a}
\newcommand{\Hc}{\mathbb{H}_c}
\renewcommand{\u}{\vec{u}}
\newcommand{\DD}{Q_{s, N}( u_N)}
\newcommand{\DDD}{Q_{s}( u)}
\begin{document}
\baselineskip = 14pt

\title[Quasi-invariant Gaussian measures for the 3-$d$ NLW]
{Quasi-invariant Gaussian measures for the nonlinear wave equation in three dimensions}
\author[T.~Gunaratnam, T.~Oh,  N.~Tzvetkov, and H.~Weber]
{Trishen S.~Gunaratnam, Tadahiro Oh, Nikolay Tzvetkov,
and  Hendrik Weber}

\address{
Trishen S.~Gunaratnam\\
Section de Math\'ematiques \\
 Universit\'e de Gen\`eve\\
 Rue du Conseil-G\'en\'eral 7-9\\ 1205 Gen\`eve \\Switzerland}

 \email{trishen.gunaratnam@unige.ch}

\address{
Tadahiro Oh, School of Mathematics\\
The University of Edinburgh\\
and The Maxwell Institute for the Mathematical Sciences\\
James Clerk Maxwell Building\\
The King's Buildings\\
Peter Guthrie Tait Road\\
Edinburgh\\ 
EH9 3FD\\
 United Kingdom}

\email{hiro.oh@ed.ac.uk}

\address{
Nikolay Tzvetkov\\
Universit\'e de Cergy-Pontoise\\
 2, av.~Adolphe Chauvin\\
  95302 Cergy-Pontoise Cedex\\
  France}

\email{nikolay.tzvetkov@u-cergy.fr}

\address{
Hendrik Weber\\
Department of Mathematical Sciences\\
 University of Bath\\
 4 West\\ Claverton Down\\ Bath \\ BA2 7AY\\United Kingdom}

 \email{H.Weber@bath.ac.uk}

\subjclass[2010]{35L71; 60H30}
\keywords{nonlinear wave equation; 
 Gaussian measure; quasi-invariance;
Euclidean quantum field theory}
\begin{abstract}
We prove quasi-invariance of Gaussian measures
supported on Sobolev spaces under the dynamics of  the three-dimensional
defocusing cubic nonlinear wave equation.  
As in the previous work on  the two-dimensional case, 
we employ
a simultaneous renormalization on the energy functional and its time derivative.
Two new  ingredients in the three-dimensional case
are (i)~the construction of the weighted Gaussian measures, based on a variational formula for the partition function inspired by Barashkov and Gubinelli~(2018),
%
and (ii)~an improved argument in controlling
the growth of the truncated weighted Gaussian measures,  
where we combine a deterministic growth bound
of solutions 
with stochastic estimates on random distributions.
\end{abstract}


\maketitle

\tableofcontents

\section{Introduction}

\subsection{Main result}

We consider
 the following defocusing cubic nonlinear wave equation (NLW) on the three-dimensional torus 
 $\T^3 = (\R/(2\pi \Z))^3$:
\begin{equation}\label{NLW0}
\partial_t^2 u-\Delta u+u^3=0,
\end{equation}

\noi
where $u : \T^3 \times \R\rightarrow \R$ is the unknown function. 
With $v = \dt u$, we rewrite \eqref{NLW0} in the following vectorial form:
\begin{equation}\label{NLW}
\begin{cases}
\partial_t u = v\\
\partial_t v = \Delta u - u^3.
\end{cases}
\end{equation}

\noi
Given $\s \in \R$, let $H^\s(\T^3)$ denote the classical $L^2$-based  Sobolev space of order $\s$
defined by the norm:
\[ \| u\|_{H^\s} = \| \jb{n}^\s\ft u(n)\|_{\l^2(\Z^3)}, \]
 
\noi 
where 
$\jb{\,\cdot\,} = (1+|\,\cdot\,|^2)^\frac{1}{2}$ and $\ft u$ denotes the Fourier transform
of $u$. 
A classical argument yields global well-posedness
of 
the Cauchy problem~\eqref{NLW} 
in the Sobolev spaces:
\[
\H^\s(\T^3)\stackrel{\text{def}}{=} H^\s(\T^3)\times H^{\s-1}(\T^3)
\]
for $\s \geq 1$ and, consequently, admits a global flow $\Phi_{\NLW}$ 
(see Lemma~\ref{LEM:CP_GWP} below) on these spaces. 

Given $s \in \R$, let $\muu_s$ denote the Gaussian measure with Cameron-Martin space $\H^{s+1}(\T^3)$. Denoting $\vec{u} = (u,v)$, the Gaussian measure $\muu_s$ has a formal density:
\begin{align*}
\begin{split}
 d \muu_s 
&   = Z_s^{-1} e^{-\frac 12 \| \vec{u} \|_{{\H}^{s+1}}^2} d\vec{u}
\rule[-4mm]{0mm}{0mm}
\\
& =   \prod_{n \in \Z^3} Z_{s, n}^{-1}
 e^{-\frac 12 \jb{n}^{2(s+1)} |\ft u (n)|^2}   
  e^{-\frac 12 \jb{n}^{2s} |\ft v (n)|^2}   
 d\ft u (n) d\ft v (n).
\end{split}
\end{align*}

\noi
Samples $\vec u^\o = (u^\o, v^\o)$ from $\muu_s$ can be constructed via 
the following Karhunen-Lo\`eve expansions:\footnote{By convention, 
 we endow $\T^3$ with the normalized Lebesgue measure $ (2\pi)^{-3} dx$.}
\begin{equation}\label{series}
u^\o(x) = \sum_{n \in \Z^3} \frac{g_n(\o)}{\jb{n}^{s+1}}e^{in\cdot x}
\qquad\text{and}\qquad
v^\o(x) = \sum_{n \in \Z^3} \frac{h_n(\o)}{\jb{n}^{s}}e^{in\cdot x}, 
\end{equation}

\noi
where  $\{ g_n \}_{n \in \Z^3}$ and $\{ h_n \}_{n \in \Z^3}$
are collections of standard complex-valued Gaussian variables which are independent modulo the condition\footnote{In particular, we impose that $g_0$ and $h_0$ are real-valued.} $g_n = \cj{g_{-n}}$ and $h_n = \cj{h_{-n}}$.
It is easy to see that the series~\eqref{series} converge in $L^2(\Omega;\H^{\s}(\T^3))$ 
 for 
 \begin{align}
 \s < s- \frac12
 \label{sig}
 \end{align}
and therefore the map
\[\o \in \O \longmapsto (u^\o, v^\o)\]

\noi
induces the Gaussian measure  $\muu_s$ 
as a probability measure on $\H^\s(\T^3)$ for the same range of~$\s$.
Our main goal in this paper is to study the transport property
of the Gaussian measure $\muu_s$ under the dynamics of \eqref{NLW}.
We state our main result.

\begin{theorem}\label{THM:NLW}
Let $s\geq 4$ be an even integer. Then,  $\muu_s$ is \textit{quasi-invariant} under the dynamics
of the defocusing cubic NLW \eqref{NLW} on $\T^3$. More precisely, for any $t \in \mathbb{R}$, 
the Gaussian measure $\muu_s$ and its pushforward under $\Phi_{\NLW}(t)$ are mutually absolutely continuous. 
\end{theorem}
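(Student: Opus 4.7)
The plan is to combine a frequency truncation scheme with a simultaneous renormalization of the energy and its time derivative, as in the authors' two-dimensional work, realized in three dimensions through a variational construction of a weighted Gaussian measure. Let $\pi_N$ denote the projector onto frequencies $|n|\le N$, and $\Phi_N(t)$ the global flow of the truncated equation $\dt^2 u-\Dl u+\pi_N((\pi_N u)^3)=0$. For each $N$ I would construct a weight $F_{s,N}(\vec u)=e^{-R_{s,N}(\vec u)}$ such that the weighted measure $d\rho_{s,N}=Z_{s,N}^{-1}F_{s,N}(\vec u)\,d\muu_s$ is essentially invariant under $\Phi_N(t)$. Differentiating $(\Phi_N(t))_*\rho_{s,N}$ in time and matching leading orders forces $R_{s,N}$ to be built from the Wick renormalization of the truncated quartic energy $\tfrac14\int :\!(\pi_N u)^4\!:dx$ together with the analogous renormalization of its time derivative $\int :\!(\pi_N u)^3\!:(\pi_N v)\,dx$, in parallel with the two-dimensional case but with additional counterterms dictated by the worse regularity of $\muu_s$ in dimension three.

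The first obstacle is the uniform-in-$N$ construction of $\rho_{s,N}$: standard Nelson-type arguments fail in 3D because of the roughness of $\muu_s$. I would apply the Bou\'e--Dupuis variational formula
\[
-\log Z_{s,N} = \inf_{\eta}\E\bigg[R_{s,N}(X+I(\eta))+\tfrac12\int_0^1 \|\eta(\tau)\|_{L^2_x}^2\,d\tau\bigg],
\]
where $X(\tau)$ is a Brownian-type process whose time-$1$ marginal has law $\muu_s$, $I(\eta)$ is the associated drift integral, and $\eta$ ranges over adapted drifts. Splitting $X+I(\eta)$ into a rough Gaussian part, whose Wick powers are controlled by Gaussian hypercontractivity, and a smoother drift, absorbed by the entropy term $\tfrac12\|\eta\|_{L^2_{\tau,x}}^2$, should yield bounds on $\log Z_{s,N}$ and on $\|F_{s,N}\|_{L^p(\muu_s)}$ uniform in $N$, following the strategy of Barashkov--Gubinelli.

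Next comes the growth estimate. For a non-negative test function $G$ on $\H^\s(\T^3)$, differentiating in time gives
\[
\frac{d}{dt}\int G\,d(\Phi_N(t))_*\rho_{s,N}
= \int G\cdot Q_{s,N}(\vec u)\,d(\Phi_N(t))_*\rho_{s,N},
\]
where $Q_{s,N}(\vec u)$ is the residual drift left after the cancellation engineered by $R_{s,N}$. The new step in dimension three is to control $|Q_{s,N}|$ by a product of a deterministic factor, bounded via an improved Sobolev growth estimate for $\Phi_N(t)$, and a stochastic factor built from renormalized Wick powers under $\muu_s$. Combined with H\"older's inequality and the partition-function bounds above, this should give a Gronwall-type estimate
\[
\frac{d}{dt}\int G\,d(\Phi_N(t))_*\rho_{s,N} \les C\bigg(\int G^{1+\eps}\,d(\Phi_N(t))_*\rho_{s,N}\bigg)^{\frac{1}{1+\eps}}
\]
with $C$ independent of $N$.

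Finally, passing to the limit using $\Phi_N(t)\to\Phi_{\NLW}(t)$ on a set of full $\muu_s$-measure (via the global well-posedness lemma cited in the excerpt) together with the uniform $L^p(\muu_s)$-bounds on $F_{s,N}$, one obtains, for every $t$ and every Borel $A\subset\H^\s(\T^3)$, an estimate of the form $(\Phi_{\NLW}(t))_*\muu_s(A)\le C(t)\,\muu_s(A)^{\ta}$ with $\ta=\ta(t)\in(0,1)$; applied also for $-t$, mutual absolute continuity follows. The hardest part will be verifying that the simultaneous Wick renormalization inside $R_{s,N}$ really does cancel all $N$-divergent counterterms generated by differentiating along the cubic NLW flow, so that $Q_{s,N}$ admits the stochastic/deterministic splitting uniformly in $N$; this is precisely the point at which the three-dimensional analysis departs from the two-dimensional one, and where the simultaneous-renormalization choice, rather than a naive energy-only renormalization, becomes essential.
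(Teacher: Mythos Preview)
Your proposal has the right architecture (truncation, weighted Gaussian measure via a Barashkov--Gubinelli variational formula, a hybrid deterministic/stochastic growth estimate, and passage to the limit), but the core object that you propose to renormalize is wrong, and this derails the scheme.

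You build $R_{s,N}$ from Wick powers $:\!(\pi_N u)^4\!:$ and $:\!(\pi_N u)^3\!:(\pi_N v)$, i.e.\ a $\Phi^4$-style renormalization of the potential energy. In the present setting $s\ge 4$, so samples of $\muu_s$ lie in $\C^\s$ for $\s<s-\tfrac12$, which is strictly positive; hence $u^4$ and $u^3 v$ are perfectly well defined without any Wick ordering, and these renormalizations are vacuous. The divergence you must cure comes from differentiating the Gaussian part of the energy, not the potential: computing $\partial_t\|\vec u\|_{\H^{s+1}}^2$ along the flow and applying the Leibniz rule to $D^s(u_N^3)$ produces the term $\int D^s v_N\,D^s u_N\,u_N^2$, which one rewrites as $-\tfrac32\,\partial_t\!\int (D^s u_N)^2 u_N^2 + 3\!\int (D^s u_N)^2 v_N u_N$. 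Since $D^s u$ has regularity below $-\tfrac12$ in three dimensions, it is $(D^s u_N)^2$ that diverges, with $\E_{\muu_s}[(D^s u_N)^2]=\sigma_N\sim N$. The correct correction term is therefore $\tfrac32\int\big((D^s u_N)^2-\sigma_N\big)u_N^2$, and the ``simultaneous renormalization'' refers to the fact that subtracting $\sigma_N$ in both the energy and its time derivative produces an exact cancellation because $-\tfrac32\sigma_N\,\partial_t\!\int u_N^2 + 3\sigma_N\!\int v_N u_N=0$. Your proposed $R_{s,N}$ does not see this term at all, so your residual $Q_{s,N}$ would still contain the raw product $(D^s u_N)^2$ and would not admit any uniform-in-$N$ stochastic bound.

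Two smaller points. First, the paper does \emph{not} obtain a global estimate of the form $(\Phi(t))_*\muu_s(A)\le C(t)\,\muu_s(A)^\theta$; the differential inequality carries a factor $p$ and a constant depending on the ball $B_R$ containing $A$, so one only gets a set-dependent, non-quantitative quasi-invariance (cf.\ the paper's remark that the 3D argument does not yield the 2D-type bound). Second, the paper switches from $\muu_s$ to an equivalent Gaussian $\nuu_s$ adapted to the wave symbol and adds the full truncated Hamiltonian $E_N$ (including $\tfrac14\int u_N^4$) to the modified energy; this is what makes the variational construction go through without an energy cutoff, since the positive quartic term is needed to absorb the cross terms coming from the drift. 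Your sketch omits this and would run into trouble controlling the low-frequency/large-field behavior in the Bou\'e--Dupuis argument.
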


Theorem \ref{THM:NLW} ensures the propagation of almost sure properties of $\muu_s$ along the flow. This is important because, in infinite dimensions,
 many interesting properties concerning small-scale behavior under a Gaussian measure
  hold true with probability 0 or 1. This is an implication of Fernique's theorem (Theorem 2.7 in  \cite{DZ});
 under a Gaussian measure, any given norm is finite with probability 0 or 1. 
For example, samples $\u$ of the Gaussian measure $\muu_s$ 
almost surely belong to  the $L^p$-based Sobolev spaces  $\vec W^{\s,  p}(\T^3)$ for any $p \geq 1$ 
and more generally 
to  the 
Besov spaces, $\vec{B}^{\s}_{p,q}(\T^3)$ for any $p , q \geq 1$, 
including the case $p= q =\infty$ (H\"older-Besov space),
provided that $\s$ satisfies \eqref{sig}.
Theorem \ref{THM:NLW} then implies that these $L^p$-based regularities are transported along the nonlinear flow.  An analogous statement for deterministic initial data is expected to  fail in general. 
     See \cite{Lit, Peral, Sogge}.

Theorem \ref{THM:NLW} is an addition to a series of recent results \cite{Tzvet, OTz, OST, OTz3, OTT} that has made significant progress in the study of transport properties of Gaussian measures under nonlinear Hamiltonian PDEs. The general strategy, as introduced by the third author in \cite{Tzvet}, is to study quasi-invariance of the Gaussian measures $\muu_s$ indirectly by studying weighted Gaussian measures, where the weight corresponds to a correction term that arises due to the presence of the nonlinearity. 
See Subsection \ref{SUBSEC:cor}.
The two key steps   in this strategy are (i) the construction of the weighted Gaussian measure
and  (ii) an energy estimate on the time derivative of the modified energy
(that is, the energy of the Gaussian measure plus the correction term). 
In \cite{OTz3}, the second and third authors employed this strategy
and proved  the analogue of Theorem \ref{THM:NLW} in the two-dimensional case.
This was done
by introducing a simultaneous renormalization on the modified energy functional and its time derivative
and then performing
 a delicate analysis centered on a quadrilinear Littlewood-Paley expansion.  

As pointed out in \cite{OTz3}, the argument in the two-dimensional case does not extend
to the current three-dimensional setting.
The proof of Theorem \ref{THM:NLW} uses two new key ingredients. 
The first is the use of a variational formula in constructing weighted Gaussian measures, inspired by Barashkov and Gubinelli 
\cite{BG}.
The second new ingredient appears in studying 
the growth of the truncated weighted Gaussian measures, 
where we combine
 a deterministic growth bound
on solutions (as in a recent paper by Planchon, Visciglia, and the third author \cite{PTV})
with stochastic estimates on random distributions
(as in the two-dimensional case \cite{OTz3}).
This hybrid argument allows us to use a softer energy estimate to prove quasi-invariance.
Our simplification also comes from the use of Besov spaces
in the spirit of \cite{MWX}.
This results in a significantly simpler proof of quasi-invariance in the harder, physically relevant three-dimensional case as compared with the two-dimensional case.

\subsection{Remarks and comments} 
\label{SEC1:2}

(i) 
A slight modification of the proof of Theorem \ref{THM:NLW}
shows that the Gaussian measures $\muu_s$ are also quasi-invariant under the nonlinear Klein-Gordon
equation:
\begin{equation}\label{KG}
\begin{cases}
\partial_t u = v\\
\partial_t v = (\Delta - 1) u - u^3.
\end{cases}
\end{equation}

\noi
It is easy to see that $\muu_s$
is invariant under the linear Klein-Gordon equation,
i.e.~removing $u^3$ in \eqref{KG},
which  trivially implies that almost sure properties of $\muu_s$ are transported along the flow of the linear dynamics.
The addition of a defocusing cubic nonlinearity into the equation destroys invariance 
but the quasi-invariance of $\muu_s$ for \eqref{KG} can be interpreted as saying that the nonlinear flow retains the small-scale properties of the linear flow.

In order to obtain invariance of $\muu_s$ under the linear wave equation, one would need to replace 
$\jb{\,\cdot\,}$ with $|\cdot|$ in \eqref{series}, which would raise an issue at  the zeroth Fourier mode (see Remark \ref{REM:zero_freq}). 
Nevertheless, in the study of small-scale properties of solutions, 
this issue is irrelevant and one can easily show that  $\muu_s$ is quasi-invariant under the linear wave equation.
Theorem~\ref{THM:NLW} then implies 
that the NLW dynamics also retains   the small-scale properties of the linear wave dynamics.

\smallskip
\noi
(ii) The restriction that $s$ is an even integer in Theorem \ref{THM:NLW} 
comes from an application of  the classical Leibniz rule in order 
to derive the right correction term for the modified energy and the weighted Gaussian measure. 
In terms of regularity restrictions, the construction of the weighted Gaussian measure works for any real $s > \frac 32$ 
(Proposition \ref{PROP:QFT0}).
Our argument for the energy estimate 
(Proposition \ref{PROP:Nenergy}) only requires $s > \frac52$ but, in our derivation of a modified energy,
 we also use the classical Leibniz rule for $(-\Delta)^{\frac{s}{2}}$ which only works if $s$ is an even integer. It may be possible to relax this second condition using a fractional Leibniz rule to go below  $s = 4$.
 At present, however, we do not know how to do this.\footnote{In a recent paper \cite{STX},
Theorem \ref{THM:NLW} was extended to the range $s > \frac 52$.
The authors in \cite{STX} also proved  quasi-invariance of $\muu_s$ for the quintic NLW 
(with $u^5$ replacing $u^3$ in \eqref{NLW0}) in the same range $s > \frac 52$.
}

\smallskip
\noi
(iii) Our new hybrid argument in proving Theorem \ref{THM:NLW}
requires a softer energy estimate than that in \cite{OTz3} and is also applicable to the two-dimensional case.
We point out, however, that the argument in~\cite{OTz3}, 
involving heavier multilinear analysis, 
provides better quantitative information on 
the growth of the truncated weighted Gaussian measures.
See Remark \ref{REM:loss}.
For example, the argument in \cite{OTz3} allows us to prove higher $L^p$-integrability
of the Radon-Nikodym derivative of the weighted Gaussian measures
(with an energy cutoff), while our proof of Theorem \ref{THM:NLW}
does not provide such extra information.

\smallskip
\noi
(iv)
It would be of interest
to investigate the quasi-invariance property of $\muu_s$ for NLW with a higher order nonlinearity
or in higher dimensions. Our techniques appear to carry over to higher order nonlinearities. This might even permit to analyze energy-supercritical equations
(such as the three-dimensional septic NLW),
where global well-posedness  is not known. 
Consequently, one might aim to prove ``local-in-time'' quasi-invariance (as stated in~\cite{Bo_proceeding}).\footnote{In \cite{STX}, such local-in-time quasi-invariance of $\muu_s$, $s > 3$,  was shown
for NLW on $\T^3$ with a higher order nonlinearity $u^{2k + 1}$ for an integer $k \geq 3$.}
See also~\cite{PTV}
for an example of a local-in-time quasi-invariance result.
See also Remark \ref{REM:new1} below.

\smallskip
\noi
(v) Quasi-invariance results such as Theorem \ref{THM:NLW} are complimentary to the study of low regularity well-posedness with  random initial data. 
Starting with the seminal work of Bourgain \cite{BO94, BO96}, 
  there has been intensive study on the random data Cauchy theory
for  nonlinear dispersive PDEs (we refer the readers to \cite{BOP4} for a more detailed survey of the literature). There are two related directions in this study.
The first one is the study of 
invariant measures associated with conservation laws such as Gibbs measures, 
in particular,  the construction of almost sure global-in-time dynamics
via the so-called Bourgain's invariant measure argument; 
see \cite{OTz, BOP4} for the references therein. 
The other is the study of almost sure well-posedness
with respect to random initial data.
Here, 
 one can often exploit the higher $L^p_x$-based regularity made accessible by randomization 
 of initial data  to establish well-posedness below critical thresholds, 
 where equations are ill-posed in $L^2$-based Sobolev spaces. 
 In the context of NLW, see the work  \cite{BT08, BT14} by Burq and the third author
 for almost sure local well-posedness.
There are also globalization arguments in this probabilistic setting;
see  \cite{BT14, Poc, OP1, OP2}.

As for  the defocusing cubic NLW \eqref{NLW} on $\T^3$, 
the scaling symmetry induces the critical regularity $\s_\text{crit} = \frac 12$.
It is known that 
 \eqref{NLW} is locally well-posed in $\H^\s(\T^3)$ for $\s \geq \frac 12$, 
while it is  ill-posed for $\s < \frac 12$; see \cite{LS, CCT, BT08, OOTz}.
 In \cite{BT08, BT14}, 
 Burq and the third author proved almost sure global 
 well-posedness of \eqref{NLW} with respect to the random initial data in 
 \eqref{series} for $s > \frac 12$, namely for $\s > 0$.
 In this regime, the flow $\Phi_{\NLW}$ exists almost surely globally in time.
Then, it is natural to ask the following question.

\smallskip

\noi
{\bf Problem.} 
{\it  Study the transport property of the Gaussian measures $\muu_s$
for low values of $s >\frac 12$, in particular in the regime where 
the global-in-time dynamics is constructed only probabilistically.
}

\subsection{Organization}
In Section \ref{SEC:TOOL}, we introduce basic tools in our proof: Besov spaces, 
the Wiener chaos estimate, 
the classical well-posedness theory of \eqref{NLW},  and also deterministic growth bounds. 
In Section \ref{SEC:PLAN}, we present the proof of Theorem \ref{THM:NLW}
assuming (i) the construction of the weighted Gaussian measures 
(Proposition \ref{PROP:QFT0})
and (ii) the energy estimate (Proposition \ref{PROP:Nenergy}).
Section \ref{SEC:QFT} is devoted to the construction of the weighted Gaussian measures and, finally, Section \ref{SEC:ENERGY} deals with the energy estimate.


\section
{Analytic and stochastic toolbox}
\label{SEC:TOOL}

\subsection{On the phase space}

Given $N \in \N$, we denote by  $\pi_N$ the frequency projector
on the (spatial) frequencies $\{|n| \leq N\}$: 
$$
(\pi_Nu)(x)=\sum_{|n|\leq N} \ft u_n \, e^{in\cdot x},  
$$

\noi
We then set
$$
\EE_N = \pi_N L^2(\T^3).
$$

\noi
Namely, $\EE_N$ is 
the finite-dimensional vector space of real-valued trigonometric polynomials of degree $\leq N$ endowed with the restriction of the $L^2(\T^3)$ scalar product.
The product space $\EE_N \times \EE_N$ is a finite dimensional real inner-product
space and thus there is a canonical Lebesgue measure on this space, which we
denote by $L_N$.
\noi
We also use $(\EE_N\times \EE_N)^\perp$
to denote
the orthogonal complement of $\EE_N\times \EE_N$ in $\H^\s(\T^3)$, $\sigma<s - \frac 12$.

%
%
%
%
%
%
%
%
%
%
%

\subsection{Besov spaces}

Let $B(\xi,r)$ denote the ball in $\R^3$ of radius $r>0$ centered at $\xi \in \mathbb{R}^3$ and let $\mathcal{A}$ denote the annulus $B(0,\frac 43)\setminus B(0, \frac{3}{8})$.
Letting $\N_0 = \N \cup\{0\}$, 
we define a sequence  $\{\chi_j\}_{j\in \N_0}$ by setting
\[
\chi_{0} = \wt\chi, \qquad 
\chi_j (\,\cdot\,)= \chi(2^{-j}\,\cdot\,), \qquad 
\text{and}\qquad  \sum_{j = 0}^\infty  \chi_j \equiv 1\]

\noi 
for some suitable  $\wt\chi, \chi \in C^\infty_c(\R^3; [0, 1])$ such that $\supp(\wt \chi) \subset B(0,\frac 43)$ 
and $\supp (\chi) \subset \mathcal{A}$.
We then define the Littlewood-Paley projector $\P_j$, $j \in \N_0$, 
by setting
\begin{equation*}
\P_j u(x) = \sum_{n \in \Z^3} \chi_j(n) \ft u(n) e^{ i n \cdot x}
\end{equation*}

\noi
for $u \in \mathcal{D}'(\T^3)$.

Given $s \in \R$ and $1\leq p, q \leq \infty$, 
the Besov space $B^s_{p,q}(\T^3)$ is the set of distributions $u \in \mathcal{D}'(\T^3)$ such that
\begin{equation}
\label{Besov}
\| u \|_{\B^s_{p,q}} 
= \Big\| \big\{2^{s j} \| \P_j u \|_{L^p_x}\big\}_{j \in \N_0} \Big\|_{\l_j^q} < \infty.
\end{equation}

\noi
We use the conventions $\vec{B}^s_{p,q} (\T^3)= B^s_{p,q}(\T^3) \times B^{s-1}_{p,q}(\T^3)$ 
and 
$\vec \C^s (\T^3) = \C^s (\T^3)\times \C^{s-1} (\T^3)$, 
where 
$\C^s (\T^3)= \B^s_{\infty,\infty}(\T^3)$ denotes the H\"older-Besov space.
Note that (i)~the parameter $s$ measures differentiability and $p$ measures integrability, 
(ii)~$H^s (\T^3) = \B^s_{2,2}(\T^3)$,
and (iii)~for $s > 0$ and not an integer, $\C^s(\T^3)$ coincides with the classical H\"older spaces;
see \cite{Graf}.

\begin{lemma}
The following estimates hold.

\noi
\textup{(i) (interpolation)} 
For  $0 < s_1  < s_2$, we have\footnote{We use the convention that the symbol $\lesssim$ indicates that inessential constants are suppressed in the inequality.}
\begin{equation}
\| u \|_{H^{s_1}} \les \| u \|_{H^{s_2}}^{\frac{s_1}{s_2}} \| u \|_{L^2}^{\frac{s_2-s_1}{s_2}}.
\label{interp}
\end{equation}

\noi
\textup{(ii) (immediate  embeddings)}
Let $s_1, s_2 \in \R$ and $p_1, p_2, q_1, q_2 \in [1,\infty]$.
Then, we have
\begin{align} 
\begin{split}
\| u \|_{\B^{s_1}_{p_1,q_1}} 
&\les \| u \|_{\B^{s_2}_{p_2, q_2}} 
\qquad \text{for $s_1 \leq s_2$, $p_1 \leq p_2$,  and $q_1 \geq q_2$},  \\
\| u \|_{\B^{s_1}_{p_1,q_1}} 
&\les \| u \|_{\B^{s_2}_{p_1, \infty}} 
\qquad \text{for $s_1 < s_2$},\\
\| u \|_{\B^0_{p_1, \infty}}
 &  \les  \| u \|_{L^{p_1}}
 \les \| u \|_{\B^0_{p_1, 1}}.
\end{split}
\label{embed}
\end{align}

\smallskip

\noi
\textup{(iii) (algebra property)}
Let $s>0$. Then, we have
\begin{equation}
\| uv \|_{\C^s} \les \| u \|_{\C^s} \| v \|_{\C^s}.
\label{alge}
\end{equation}

\smallskip

\smallskip

\noi
\textup{(iv) (Besov embedding)}
Let $1\leq p_2 \leq p_1 \leq \infty$, $q \in [1,\infty]$,  and  $s_2 = s_1 + 3\big(\frac{1}{p_2} - \frac{1}{p_1}\big)$. Then, we have
\begin{equation}
 \| u \|_{\B^{s_1}_{p_1,q}} \les \| u \|_{\B^{s_2}_{p_2,q}}.
\label{emb_b}
\end{equation}

\smallskip

\noi
\textup{(v) (duality)}
Let $s \in \mathbb{R}$
and  $p, p', q, q' \in [1,\infty]$ such that $\frac1p + \frac1{p'} = \frac1q + \frac1{q'} = 1$. Then, we have
\begin{equation}
\bigg| \int_{\T^3}  uv \bigg|
\le \| u \|_{B^{s}_{p,q}} \| v \|_{B^{-s}_{p',q'}},
\label{dual}
\end{equation}

\noi
where $\int_{\T^3} u v $ denotes  the duality pairing between $B^{s}_{p,q}(\T^3)$ and $B^{-s}_{p',q'}(\T^3)$.

\smallskip
	
\noi		
\textup{(vi) (fractional Leibniz rule)} 
Let $p, p_1, p_2, p_3, p_4 \in [1,\infty]$ such that 
$\frac1{p_1} + \frac1{p_2} 
= \frac1{p_3} + \frac1{p_4} = \frac 1p$. 
Then, for every $s>0$, we have
\begin{equation}
\| uv \|_{\B^{s}_{p,q}} \les  \| u \|_{\B^{s}_{p_1,q}}\| v \|_{L^{p_2}} + \| u \|_{L^{p_3}} \| v \|_{\B^s_{p_4,q}} .
\label{prod}
\end{equation}

\smallskip
	
\noi		
\textup{(vi) (product estimate)} 
Let $s_1 < 0 < s_2$ such that $s_1 + s_2 > 0$.
Then, we have
\begin{equation}
\| uv \|_{\C^{s_1}}\les  \| u \|_{\C^{s_1}}\| v \|_{\C^{s_2}}.
\label{prod2}
\end{equation}

\end{lemma}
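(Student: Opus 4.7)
The plan is to recognize this lemma as a compilation of classical Littlewood-Paley estimates on $\T^3$, each of which follows from the defining dyadic decomposition $\P_j$ and, for the multiplicative statements, from Bony's paraproduct calculus. I would treat the linear statements (i), (ii), (iv), (v) first, and then the three product-type bounds (iii), (vi), (vii) by a unified paraproduct argument.

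For (i), the estimate is H\"older's inequality on the Fourier side: writing $\jb n^{2s_1} = (\jb n^{2s_2})^{s_1/s_2}\cdot 1^{(s_2-s_1)/s_2}$ and applying H\"older with conjugate exponents $s_2/s_1$ and $s_2/(s_2-s_1)$ to the sum defining $\|u\|_{H^{s_1}}^2$. For (ii), the inequalities reduce to the monotonicity $\ell^{q_2}_j\hra \ell^{q_1}_j$ when $q_1\geq q_2$, to $L^{p_2}_x\hra L^{p_1}_x$ on the finite-measure torus when $p_1\leq p_2$, and, for the strict case $s_1 < s_2$, to summing a geometric series in $j$ that trades $\ell^\infty_j$ against $\ell^{q_1}_j$; the last chain is immediate from $\|\P_j u\|_{L^p}\les \|u\|_{L^p}$. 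For (iv), I would apply Bernstein's inequality to each block $\P_j u$: since its Fourier support lies in an annulus of scale $2^j$, one has $\|\P_j u\|_{L^{p_1}} \les 2^{3j(1/p_2-1/p_1)} \|\P_j u\|_{L^{p_2}}$, which exactly absorbs the differentiability shift $s_2 - s_1$. For (v), after fattening $\P_j$ to $\widetilde \P_j$ so that $\P_j = \P_j \widetilde\P_j$, the pairing decomposes as $\int uv\,dx = \sum_j \int (\P_j u)(\widetilde\P_j v)\,dx$ up to a harmless finite shift in $j$, and H\"older in $x$ with exponents $p,p'$ combined with H\"older in $j$ with exponents $q,q'$ yields the claim.

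For the three multiplicative estimates (iii), (vi), (vii), the common tool is the Bony decomposition
\[
 uv \;=\; \pi_{<}(u,v) \;+\; \pi_{<}(v,u) \;+\; \pi_{=}(u,v),
\]
where $\pi_{<}(u,v)=\sum_j (\P_{\leq j-2}u)(\P_j v)$ is the low-high paraproduct and $\pi_{=}(u,v)=\sum_{|j-k|\leq 1}(\P_j u)(\P_k v)$ is the resonant part. The two paraproduct terms have their $j$-th dyadic piece spectrally localized near $2^j$, so H\"older in $x$ (with the exponent splits prescribed in each item) directly yields the corresponding Besov norm bound; this handles (iii) and the two H\"older splittings in (vi) as soon as $s>0$.

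The main analytic hurdle, and the place where the hypothesis $s_1+s_2>0$ of (vii) enters, is the resonant piece $\pi_{=}(u,v)$: its $j$-th block is spectrally supported only on frequencies $\les 2^j$ rather than on a fixed annulus, so one cannot simply read off a Besov norm. Instead, I would bound
\[
\|\P_j \pi_{=}(u,v)\|_{L^\infty} \;\les\; \sum_{k\geq j-2}\|\P_k u\|_{L^\infty}\|\P_k v\|_{L^\infty}
\;\les\; \|u\|_{\C^{s_1}}\|v\|_{\C^{s_2}}\sum_{k\geq j-2}2^{-(s_1+s_2)k},
\]
and the geometric series converges precisely when $s_1+s_2>0$, giving $\les 2^{-(s_1+s_2)j}$; multiplying by $2^{s_1 j}$ leaves $2^{-s_2 j}$, which is summable in $\ell^\infty_j$ because $s_2>0$. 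This resonant estimate is the only nontrivial step; everything else is routine bookkeeping of dyadic frequency supports.
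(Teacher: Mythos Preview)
Your proposal is correct and follows essentially the same route as the paper: H\"older for (i) and (v), monotonicity of $\ell^q$ and a geometric series for (ii), Bernstein for (iv), and Bony's paraproduct decomposition for the three product-type bounds. The only cosmetic difference is that the paper spells out the resonant estimate for the fractional Leibniz rule (vi) --- using Young's inequality in the dyadic index together with the embedding $L^{p_2}\hookrightarrow B^0_{p_2,\infty}$ --- whereas you spell it out for the product estimate (vii); both instances reduce to the same discrete-convolution argument, so neither choice leaves a gap.
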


\begin{proof}
While these estimates are standard, 
we briefly discuss their proofs for readers' convenience.
See also \cite{BCD} for details of the proofs in the non-periodic case.
The log convexity inequality \eqref{interp} 
and the duality \eqref{dual} follow from H\"older's inequality.
The first  estimate in~\eqref{embed} is immediate from the definition \eqref{Besov}, 
while the second one in \eqref{embed} follows from the $\l^{q_1}$-summability 
of $\big\{2^{(s_1 - s_2)j}\big\}_{j \in \N_0}$ for $s_1 < s_2$.
The last estimate in \eqref{embed} follows
from the boundedness of the Littlewood-Paley projector $\P_j$
and Minkowski's inequality.
The Besov embedding \eqref{emb_b} is a direct consequence
of Bernstein's inequality:
\[ \| \P_{j} u\|_{L^{p_1}} \les 2^{3j(\frac{1}{p_2} - \frac{1}{p_1})} \| \P_{j} u\|_{L^{p_2}}.\]

\noi
The algebra property~\eqref{alge}
is immediate from the following paraproduct decomposition
due to Bony \cite{Bony}:
\begin{align}
 uv = \sum_{j \in \N_0} \P_{j} u\cdot  S_j v
+ \sum_{j \in \N_0} \sum_{|j - k| \leq 1} \P_{j} u \cdot \P_{k} v
+ \sum_{k \in \N_0} S_k u \cdot \P_{k} v
\label{besov2}
\end{align}

\noi
with H\"older's inequality.
Here,  $S_j$ is given by 
\[ S_j u = \sum_{k \leq j - 2} \P_k u.\]

\noi
The fractional Leibniz rule  \eqref{prod} also follows from the paraproduct decomposition \eqref{besov2}.
In proving \eqref{prod} for the resonant product, i.e.~the second term
on the right-hand side of~\eqref{besov2}, 
one needs to proceed slightly more carefully:
\begin{align*}
\Bigg\| 2^{sm}
\bigg\| \P_m \Big(\sum_{j \in \N_0} \sum_{|j - k| \leq 1} \P_{j} u \cdot \P_{k} v\Big)\bigg\|_{L^p}\Bigg\|_{\l^q_m}
& \les \bigg\| \sum_{j \geq m - 10}
2^{s(m - j)} 2^{sj} \| \P_{j} u \|_{L^{p_1}} \| \P_{j} v\|_{L^{p_2}}\bigg\|_{\l^q_m}\\
& \les \| u \|_{\B^{s}_{p_1,q}}\| v \|_{L^{p_2}}, 
\end{align*}

\noi
where we used  Young's and H\"older's inequalities together with
the embedding: $L^{p_2}(\T^3) \hra B^0_{p_2, \infty}(\T^3)$ in the last step.
See also Lemma 2.84 in \cite{BCD}.
Lastly, the product estimate \eqref{prod2} follows from a similar consideration.
\end{proof}

\subsection{Wiener chaos estimate}

Let $\{ g_n \}_{n \in \N}$ be a sequence of independent standard Gaussian random variables defined on a probability space $(\O, \mathcal{F}, \mathbb{P})$, where $\mathcal{F}$ is the $\s$-algebra generated by this sequence. 
Given $k \in \N_0$, 
we define the homogeneous Wiener chaoses $\mathcal{H}_k$ 
to be the closure (under $L^2(\O)$) of the span of  Fourier-Hermite polynomials $\prod_{n = 1}^\infty H_{k_n} (g_n)$, 
where
$H_j$ is the Hermite polynomial of degree $j$ and $k = \sum_{n = 1}^\infty k_n$.\footnote{This implies
that $k_n = 0$ except for finitely many $n$'s.}
Then, we have the following Ito-Wiener decomposition:
\begin{equation*}
L^2(\Omega, \mathcal{F}, \mathbb{P}) = \bigoplus_{k = 0}^\infty \mathcal{H}_k.
\end{equation*}

\noi
See Theorem 1.1.1 in \cite{Nu}.
 We have the following classical Wiener chaos estimate.

\begin{lemma}\label{LEM:hyp}
Let $k \in \N_0$.
Then, we have 
 \begin{equation} \label{hyp}
 \Big( \E \big[|X|^p \big]\Big)^{\frac 1p} \leq (p-1)^\frac{k}{2} \Big( \E\big[|X|^2\big] \Big)^{\frac 12}
 \end{equation}

\noi
for any random variable $X \in \mathcal{H}_k$ and  any $2 \leq p < \infty$.

\end{lemma}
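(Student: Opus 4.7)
The plan is to derive the estimate from Nelson's hypercontractivity theorem for the Ornstein--Uhlenbeck semigroup $\{P_t\}_{t \geq 0}$ on the Gaussian probability space $(\Omega, \mathcal{F}, \mathbb{P})$. Recall that Nelson's theorem states that for $2 \leq p \leq q < \infty$ with $e^{2t} \geq \frac{q-1}{p-1}$, one has the contractive bound
\[
\|P_t f\|_{L^q(\Omega)} \leq \|f\|_{L^p(\Omega)}
\]
for all $f \in L^p(\Omega)$.

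The key algebraic fact I would use is that the Ornstein--Uhlenbeck semigroup acts diagonally on the Ito--Wiener decomposition: on each homogeneous chaos $\mathcal{H}_k$, the operator $P_t$ is multiplication by $e^{-kt}$. This is because on Hermite polynomials one has $P_t H_j = e^{-jt} H_j$, so for any Fourier--Hermite polynomial $\prod_n H_{k_n}(g_n)$ of total degree $k = \sum_n k_n$,
\[
P_t \prod_{n} H_{k_n}(g_n) = e^{-kt} \prod_{n} H_{k_n}(g_n),
\]
and this extends by linearity and closure to all of $\mathcal{H}_k$.

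Given $X \in \mathcal{H}_k$ and $2 \leq p < \infty$, I would choose $t \geq 0$ such that $e^{2t} = p - 1$ (note $p \geq 2$ guarantees $t \geq 0$). Applying hypercontractivity with exponents $2$ and $p$ (which is permissible since $e^{2t} = p-1 = (p-1)/(2-1)$), combined with the eigenvalue relation $P_t X = e^{-kt} X$, gives
\[
e^{-kt}\|X\|_{L^p(\Omega)} = \|P_t X\|_{L^p(\Omega)} \leq \|X\|_{L^2(\Omega)}.
\]
Rearranging and substituting $e^{kt} = (p-1)^{k/2}$ yields the claimed bound.

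There is no real obstacle here since the statement is a direct specialization of Nelson's theorem once the spectral picture of $P_t$ on $\mathcal{H}_k$ is invoked; the only subtlety is justifying the eigenvalue identity on the full closure $\mathcal{H}_k$ (as opposed to just polynomials), which follows from the fact that $P_t$ is a bounded operator on $L^2(\Omega)$ and the Fourier--Hermite polynomials are dense in $\mathcal{H}_k$. Consequently I would simply cite Nelson's hypercontractivity (e.g.\ Theorem 1.4.1 in Nualart \cite{Nu}) rather than reprove it.
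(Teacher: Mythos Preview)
Your argument is correct and is exactly the approach the paper takes: it simply remarks that \eqref{hyp} is a direct corollary of Nelson's hypercontractivity for the Ornstein--Uhlenbeck semigroup together with the fact that every $X \in \mathcal{H}_k$ is an eigenfunction of the Ornstein--Uhlenbeck operator with eigenvalue $-k$. You have in fact spelled out more detail than the paper does (the explicit choice $e^{2t} = p-1$ and the density argument for the eigenvalue identity on the closure), but the route is identical.
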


The estimate \eqref{hyp} is a direct corollary to the hypercontractivity of the Ornstein-Uhlenbeck semigroup due to Nelson \cite{Nelson2}
and the fact that any element $X \in \mathcal{H}_k$ is 
an eigenfunction for the Ornstein-Uhlenbeck operator
with eigenvalue $-k$. 

For our purpose, we need the following three facts: 
(i) If  $Z$ is a linear combination of $\{ g_n \}$, then $Z \in \mathcal{H}_1$.
 (ii) For $Z \in \mathcal{H}_1$, the random variable $Z^2 - \E [Z^2] \in \mathcal{H}_2$. 
 (iii) If $Y,Z \in \mathcal{H}_1$ are independent, then $YZ \in \mathcal{H}_2$.

The next lemma gives a regularity criterion  for stationary random distributions.
Recall that a random distribution $u$ on $\T^d$ is said to be stationary if $u(\,\cdot\,)$ and $u(x_0 + \cdot\,)$ have the same law 
for any $x_0 \in \T^d$. Moreover, we say that $u \in \mathcal{H}_k$ 
if 
$u(\varphi) \in \mathcal H^k$ for any test function $\varphi \in C^\infty(\T^d)$.

\begin{lemma}\label{LEM:MWX}
\textup{(i)} Let  $u$ be a stationary
random distribution on $\T^d$, belonging to 
$\mathcal{H}_{ k}$ for some $k \in \N_0$.
Suppose that there exists $s_0 \in \R$ such that 
\begin{align}
\E\big[|\ft u(n)|^2\big] \les \jb{n}^{-d - 2s_0}
\label{MWX1}
\end{align}

\noi
for any $n \in \Z^d$.
Then, for any $s < s_0$ and finite $p \geq 2$, 
we have $u  
\in L^p(\O; \C^s(\T^d))$.

\smallskip

\noi
\textup{(ii)} 
Let  $\{u_N\}_{N \in \N}$ be a sequence of stationary
random distributions on $\T^d$, belonging to $\mathcal{H}_{k}$ for some $k \in \N_0$.
Suppose that there exists $s_0 \in \R$
such that  
$u_N$ satisfies~\eqref{MWX1} for each $N \in \N$.
Moreover, suppose that 
there exists $\ta > 0$  such that 
\[\E\big[|\ft u_N(n) - \ft u_M(n)|^2\big] \les N^{-2\ta} \jb{n}^{-d - 2s_0}\] 

\noi
for any $n \in \Z^d$
and any $M \geq N \geq 1$. 
Then,  for any $s < s_0$ and finite $p \geq 2$, 
$u_N$  converges to some $u$  in $ L^p(\O; \C^s(\T^d))$.

\end{lemma}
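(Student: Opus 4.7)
The plan is to reduce the $\mathcal{C}^s$ estimate to an $L^r$-type Besov estimate via Besov embedding, and then exploit Wiener chaos hypercontractivity together with stationarity to pass from the $L^2$-moment bound \eqref{MWX1} to $L^r$-moments, with $r$ chosen large enough to win back the $d/r$ derivative loss.

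\textbf{Step 1: reduction to a Besov norm amenable to $L^r$-estimation.}
Fix $s < s_0$ and $p \geq 2$.  Choose $r \geq \max(p,2)$ so large that $s + d/r < s_0$;
note that $r > d/(s_0 - s)$ will be the crucial requirement below.
By the Besov embedding \eqref{emb_b} and the trivial embedding $\B^{s}_{\infty, r} \hra \B^{s}_{\infty, \infty} = \C^s(\T^d)$,
we have
\[
\| u \|_{\C^s} \les \|u\|_{\B^{s + d/r}_{r, r}}.
\]
Since $r \geq p$, it suffices to prove $u \in L^r(\Omega; \B^{s + d/r}_{r,r}(\T^d))$.

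\textbf{Step 2: $r$-th moment of the dyadic blocks via stationarity and Wiener chaos.}
By the definition \eqref{Besov} and Fubini,
\[
\E \big[ \|u\|^r_{\B^{s + d/r}_{r,r}} \big]
= \sum_{j \in \N_0} 2^{(s + d/r) j r} \int_{\T^d} \E \big[ |\P_j u(x)|^r \big] \, dx.
\]
For each fixed $j$ and $x$, the random variable $\P_j u(x) = \sum_{n} \chi_j(n) \ft u(n) e^{in \cdot x}$ belongs to $\mathcal{H}_k$ because it is a fixed linear combination of the Fourier modes of $u$, each of which lies in $\mathcal{H}_k$.
Hence the Wiener chaos estimate (Lemma \ref{LEM:hyp}) yields
\[
\big(\E |\P_j u(x)|^r\big)^{\frac1r} \les_{r,k} \big(\E |\P_j u(x)|^2\big)^{\frac12}.
\]
Stationarity makes the right-hand side independent of $x$, and Plancherel together with the hypothesis \eqref{MWX1} gives
\[
\E |\P_j u(x)|^2 = \sum_{n \in \Z^d} \chi_j(n)^2 \, \E |\ft u(n)|^2
\les \sum_{|n| \sim 2^j} \jb{n}^{-d - 2s_0}
\les 2^{-2 s_0 j},
\]
uniformly in $j \in \N_0$ (with the $j = 0$ case being a finite sum).

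\textbf{Step 3: summing the geometric series.}
Combining Steps 1 and 2,
\[
\E \big[ \|u\|^r_{\B^{s + d/r}_{r,r}} \big]
\les_{r,k} \sum_{j \in \N_0} 2^{(s + d/r) j r} \cdot 2^{-s_0 r j}
= \sum_{j \in \N_0} 2^{j [r(s - s_0) + d]},
\]
which converges precisely because $r > d/(s_0 - s)$.  This establishes part (i).

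\textbf{Step 4: the convergence statement in part (ii).}
Applied to the difference $u_N - u_M$ (which is again stationary and in $\mathcal{H}_k$), the same computation, now using the hypothesis $\E|\widehat{u_N - u_M}(n)|^2 \les N^{-2\theta}\jb{n}^{-d - 2s_0}$, gives
\[
\E \big[ \|u_N - u_M\|^r_{\C^s} \big] \les_{r,s,k} N^{-\theta r}
\]
for $M \geq N \geq 1$ and $r$ as above.  Thus $\{u_N\}$ is Cauchy in $L^r(\Omega; \C^s(\T^d))$, hence in $L^p(\Omega; \C^s(\T^d))$, and converges to some $u$ in this complete space.

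The only genuine choice in the argument is the Besov space in which to work: one needs enough integrability ($r$ large) to convert $L^2$-moment information into $L^r$-moment information via Lemma \ref{LEM:hyp}, but one pays $d/r$ derivatives in the embedding \eqref{emb_b}.  The hypothesis $s < s_0$ provides the required slack, so there is no real obstacle beyond a careful bookkeeping of exponents.
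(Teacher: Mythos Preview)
Your proof is correct and follows exactly the approach the paper has in mind: the paper simply refers to \cite[Proposition~3.6]{MWX}, and the argument there is precisely the Besov embedding $\C^s \hookleftarrow B^{s+d/r}_{r,r}$ combined with hypercontractivity (Lemma~\ref{LEM:hyp}) and stationarity to control the $L^r$-moments of dyadic blocks. One minor terminological remark: in Step~2 what you call ``Plancherel'' is really the consequence of stationarity that $\E[\ft u(n)\overline{\ft u(m)}]=0$ for $n\neq m$, which makes $\E|\P_ju(x)|^2$ collapse to the diagonal sum; the computation itself is fine.
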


The proof
 is a straightforward computation with  the Wiener chaos estimate (Lemma~\ref{LEM:hyp}). See \cite[Proposition 3.6]{MWX} for details
of the proof of Part~(i). Part (ii) follows from similar considerations.

\subsection{Truncated NLW dynamics:~well-posedness and approximation}
\label{SEC:NLW}

In the following, we often work at the level of the truncated dynamics in order to rigorously justify calculations. As such, in this subsection, we briefly go over the well-posedness theory and approximation results
of the following Cauchy problem for the truncated NLW on $\T^3$:
\begin{equation}\label{NLW-sys_CP}
\begin{cases}
\partial_t u=v\\
 \partial_t v=\Delta u-\pi_N\big((\pi_N u)^3\big)\\
 ( u, v)|_{t = 0} = (u_0, v_0), 
 \end{cases}
\end{equation}

\noi
where $N\geq 1$ and $\pi_N$ denotes the projector onto spatial frequencies  $\{|n| \leq N\}$. 
We also use the following shorthand notations: 
\begin{align*}
u_N = \pi_N u \qquad \text{and} \qquad v_N = \pi_N v.
\end{align*}

\noi
 We allow $N=\infty$ with the convention $\pi_{\infty}={\rm Id}$, which reduces \eqref{NLW-sys_CP} to \eqref{NLW}. 

For the (untruncated) NLW \eqref{NLW}, the conserved energy is given by 
\begin{equation*}
E(\vec{u}) = \frac 12 \int_{\T^3} \big(|\nabla u|^2 + v^2\big) + \frac 14 \int_{\T^3} u^4.	
\end{equation*}

\noi
The truncated system \eqref{NLW-sys_CP} also has the following conserved energy:
\begin{align}
E_N(\vec{u}) \label{E2}
& =\frac{1}{2}\int_{\T^3}\big(|\nabla u|^2+v^2\big) +\frac{1}{4}\int_{\T^3}(\pi_N u)^4.
\end{align}

In the following two lemmas, 
we state the classical well-posedness theory for \eqref{NLW-sys_CP}
and the relevant dynamical properties.

\begin{lemma} \label{LEM:CP_GWP}
	Let $\s \geq 1$ and $N \in \N \,\cup  \{\infty\}$.
	Then, the truncated  NLW \eqref{NLW-sys_CP} is globally well-posed in $\H^\s(\T^3)$.
	Namely, given any  $(u_0,v_0)\in \H^\s(\T^3)$, 
	there exists a unique global solution to \eqref{NLW-sys_CP} in $C(\R; {\H}^\s(\T^3))$, 
	where the dependence on initial data is continuous. 
	Moreover, if we denote by $\Phi_N(t)$ the  data-to-solution map at time $t$, then $\Phi_N(t)$ is a continuous bijection on ${\H}^\s(\T^3)$ for every $t\in\R$, satisfying the semigroup property:
	$$
	\Phi_N(t+\tau)=\Phi_N(t)\circ \Phi_N(\tau) 
	$$
	
	\noi
	for any $t, \tau \in \R$.
\end{lemma}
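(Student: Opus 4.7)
The plan is to treat the truncated case $N \in \N$ and the untruncated case $N = \infty$ uniformly, via a contraction mapping argument in Duhamel form on $\H^\s(\T^3)$, combined with the conservation law furnished by~\eqref{E2}. Write~\eqref{NLW-sys_CP} as
\[
\vec u(t) = S(t)\vec u_0 - \int_0^t S(t - \tau)\bigl(0, \pi_N((\pi_N u)^3)(\tau)\bigr) d\tau,
\]
where $S(t)$ is the linear wave group, bounded on $\H^\s(\T^3)$ for every $\s\in\R$. Since $\pi_N$ is a self-adjoint contraction on $H^\s$ with norm independent of $N$, all $\H^\s$-estimates one derives will be uniform in $N \in \N \cup \{\infty\}$, and the truncated problem converges to the untruncated one in $\H^\s$ on bounded time intervals.

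\textbf{Local well-posedness.} For $\s = 1$, the Sobolev embedding $H^1(\T^3) \hra L^6(\T^3)$ yields $\|\pi_N((\pi_N u)^3)\|_{L^2} \les \|u\|_{H^1}^3$ with the matching Lipschitz estimate on bounded sets of $H^1$. A standard Banach fixed point argument in $C([-T, T]; \H^1(\T^3))$ then produces a unique local solution, with $T = T(\|\vec u_0\|_{\H^1})$ and continuous dependence on initial data. For $\s > 1$, persistence of regularity follows by differentiating the Duhamel identity, applying a fractional Leibniz rule as in~\eqref{prod} to bound $\|u^3\|_{H^{\s-1}}$ by lower-order norms times $\|u\|_{H^\s}$, and closing a Gronwall estimate on $\|\vec u(t)\|_{\H^\s}$.

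\textbf{Global existence.} The energy $E_N$ in~\eqref{E2} satisfies $E_N(\vec u) \geq \tfrac12 \|\vec u\|_{\H^1}^2$ since the quartic term is nonnegative. A direct computation, using self-adjointness of $\pi_N$, gives $\frac{d}{dt} E_N(\vec u(t)) = 0$ for smooth data; for general $\H^1$ data the identity follows by density together with the continuous dependence above. Hence $\|\vec u(t)\|_{\H^1} \leq \bigl(2 E_N(\vec u_0)\bigr)^{1/2}$ uniformly in $t \in \R$, and the local solution extends to a global one. In higher regularity, the Gronwall bound yields (possibly fast-growing but finite) control of $\|\vec u(t)\|_{\H^\s}$, giving global existence in $\H^\s$ for every $\s \geq 1$.

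\textbf{Semigroup property and bijectivity.} The identity $\Phi_N(t+\tau) = \Phi_N(t)\circ \Phi_N(\tau)$ is immediate from uniqueness and time-translation invariance of~\eqref{NLW-sys_CP}, while continuity of $\Phi_N(t)$ on $\H^\s(\T^3)$ is inherited from the fixed point construction. For bijectivity, the system is invariant under the time-reversal symmetry $(t, u, v) \mapsto (-t, u, -v)$: writing $R(u, v) = (u, -v)$, one checks directly that $R \circ \Phi_N(-t) \circ R$ is a two-sided inverse of $\Phi_N(t)$ on $\H^\s(\T^3)$. The only delicate point I anticipate is the rigorous justification of energy conservation at the ``energy-critical'' regularity $\s = 1$, where the formal manipulations used in proving $\frac{d}{dt} E_N = 0$ are borderline; this is however entirely standard and dispatched by the smoothing-and-passage-to-the-limit just described.
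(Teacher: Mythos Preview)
Your approach is essentially the same as the paper's (standard local theory plus energy conservation, with the Sobolev embedding $H^1(\T^3)\hookrightarrow L^6(\T^3)$ doing the work). One concrete error: the inequality $E_N(\vec u) \geq \tfrac12\|\vec u\|_{\H^1}^2$ is false, since $E_N$ in~\eqref{E2} involves only $\|\nabla u\|_{L^2}^2 + \|v\|_{L^2}^2$ and does not control $\|u\|_{L^2}$; this is precisely the ``small modification at the zeroth frequency'' the paper refers to. The fix is immediate --- from $\partial_t u = v$ and the energy bound on $\|v\|_{L^2}$ one gets $\|u(t)\|_{L^2} \leq \|u_0\|_{L^2} + |t|\,(2E_N(\vec u_0))^{1/2}$, which grows at most linearly in $t$ and is enough to iterate the local argument --- but the inequality as stated should be corrected.
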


The global well-posedness result stated in Lemma \ref{LEM:CP_GWP} follows
from a standard local well-posedness theory
along with the conservation of the truncated energy $E_N(\vec{u})$.
See \cite[Lemma 2.1]{OTz3} for the proof in the two-dimensional case.\footnote{This is in the context of the nonlinear Klein-Gordon equation but the proof can be easily adapted.}
The same proof applies to the three-dimensional case
in view of the Sobolev embedding $H^1(\T^3)\subset L^6(\T^3)$
(with a small modification at the zeroth frequency).

\begin{lemma} \label{LEM:CP_APPRX}
	\textup{(i) (Growth bound)} Given $\s \geq 1$, 
	we denote by $B_R$  the ball of radius $R>0$ in $\H^\s(\T^3)$
centered at the origin. 
Then, for any given $T > 0$,
	there exists $C(R, T) > 0$ such that 
	\begin{align}
	\Phi_N(t) (B_{R}) \subset B_{C(R, T)}
	\label{growth1}
	\end{align}
	
	\noi
	for any $t\in [0, T]$ and $N \in \N \cup \{\infty\}$.

	\smallskip
	
	\noi
	\textup{(ii) (Approximation)}
	Let $\sigma\geq 1$, $T>0$,  and $K$ be a compact set in $\H^\s(\T^3)$. 
	Then,  for every $\eps>0$,
	there exists  $N_0\in \N$ such that 
	\begin{equation*}
	\|\Phi(t)(\vec{u})-\Phi_N(t)(\vec{u})\|_{\H^\sigma(\T^3)}<\eps
	\end{equation*}

	\noi
	for any $t \in [0, T]$, $\vec{u}\in K$,  and  $N\geq N_0$. Hence, 
we have
	\begin{equation*}
	\Phi(t)(K)\subset \Phi_N(t)(K+B_\eps).
	\end{equation*}
	
	\noi
	for  any $t \in [0, T]$ and
 $N\geq N_0$.
	Here, $\Phi(t)$ denotes the solution map
	$\Phi_{\infty}(t)= \Phi_\NLW(t)$
	for the \textup{(}untruncated\textup{)} NLW \eqref{NLW}.
\end{lemma}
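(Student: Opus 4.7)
The plan is to base everything on the conservation of the truncated energy $E_N$ in~\eqref{E2}, which is the only $N$-uniform \emph{a priori} bound available, and to propagate regularity to arbitrary $\s$ by a standard persistence argument.  For Part~(i) at $\s = 1$, the Sobolev embedding $H^1(\T^3) \hra L^4(\T^3)$ together with the $L^4$-boundedness of $\pi_N$ yields
\[
E_N(\vec u(0)) \les \|\vec u(0)\|_{\H^1}^2 + \|\vec u(0)\|_{\H^1}^4 \les R^2 + R^4
\]
whenever $\|\vec u(0)\|_{\H^1} \leq R$.  The identity $E_N(\vec u(t)) = E_N(\vec u(0))$, combined with positivity of the $L^4$ contribution, then gives $\|\vec u(t)\|_{\H^1} \les R + R^2$ uniformly in $t \in \R$ and $N \in \N \cup \{\infty\}$. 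For $\s > 1$, I would run a standard persistence-of-regularity argument for the three-dimensional cubic NLW: applying $\jb{\nabla}^{\s-1}$ to the equation, pairing with $\jb{\nabla}^{\s - 1} v$, and invoking the fractional Leibniz rule \eqref{prod} together with Strichartz estimates for the linear wave equation to control $\|u\|_{L^4_T L^\infty_x}$ by the $\H^1$-norm, one arrives at a Gronwall inequality whose iteration over a finite partition of $[0, T]$ produces $\|\vec u(t)\|_{\H^\s} \leq C(R, T)$.  Since $\pi_N$ commutes with every Fourier multiplier and is bounded on every $L^p(\T^3)$, the resulting constants are independent of $N$.

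For Part~(ii), the compactness of $K \subset \H^\s(\T^3)$, combined with the uniform-in-$N$ continuous dependence of $\Phi_N(t)$ on initial data (a direct consequence of Part~(i) and the local theory), reduces matters to the pointwise statement $\Phi_N(t)\vec u_\ast \to \Phi(t)\vec u_\ast$ in $\H^\s$ uniformly on $[0, T]$, for each fixed $\vec u_\ast \in K$.  Denoting the two solutions by $u = u^{(\infty)}$ and $u_N = u^{(N)}$, the difference $\vec w_N = \Phi(t)\vec u_\ast - \Phi_N(t)\vec u_\ast$ solves a linear wave equation with vanishing Cauchy data and source
\[
F_N \,=\, -(1 - \pi_N)(u^3) \,-\, \pi_N(u^3 - u_N^3) \,-\, \pi_N\bigl(u_N^3 - (\pi_N u_N)^3\bigr).
\]
By Part~(i), both $u$ and $u_N$ are uniformly bounded in $H^\s(\T^3)$ on $[0, T]$, so the first and third pieces of $F_N$ converge to zero in $L^1_T H^{\s-1}$ as $N \to \infty$ (using $\|(1-\pi_N) f\|_{H^{\s-1}} \les N^{-1}\|f\|_{H^\s}$ together with the algebra property \eqref{alge}).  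The middle piece is linear in $u - u_N$ with $L^\infty$-bounded coefficients, so an $\H^\s$-energy estimate for $\vec w_N$ followed by Gronwall's inequality yields the required convergence.

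The main obstacle will be the $N$-uniform persistence of regularity in Part~(i) for $\s > 1$, since the conservation of $E_N$ directly controls only the $\H^1$-norm.  Propagating higher Sobolev regularity on arbitrary time intervals requires marrying the energy estimate with a Strichartz-type dispersive estimate for the three-dimensional wave equation, and one must verify that the resulting implicit constants are independent of the frequency cutoff $\pi_N$.  This independence is automatic---$\pi_N$ is a Fourier multiplier bounded on every $L^p$ and commuting with $\jb{\nabla}^s$---but the persistence computation itself is the technical core of the proof; once in hand, Part~(ii) follows by the routine comparison argument sketched above.
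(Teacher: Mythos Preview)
Your overall strategy is sound, and Part~(ii) matches what the paper does (which simply refers to standard local well-posedness arguments). For Part~(i), however, you are working harder than necessary, and there is a scaling slip in your Strichartz step.

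The paper avoids Strichartz estimates entirely. The key observation is that in three dimensions the Sobolev embedding $H^1(\T^3) \hookrightarrow L^6(\T^3)$ and the Besov embedding $H^\s = B^\s_{2,2} \hookrightarrow B^{\s-1}_{6,2}$ combine with the fractional Leibniz rule~\eqref{prod} to give directly
\[
\|u^3\|_{H^{\s-1}} \lesssim \|u\|_{B^{\s-1}_{6,2}} \|u\|_{L^6}^2 \lesssim \|u\|_{H^\s} \|u\|_{H^1}^2.
\]
Plugging this into the Duhamel formula and using the conservation of $E_N$ to bound $\|u(t')\|_{H^1}^2$ uniformly in time yields a linear Gronwall inequality for $\|\vec u(t)\|_{\H^\s}$, with constants manifestly independent of $N$. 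No dispersive estimate is needed; the cubic nonlinearity in three dimensions is exactly at the threshold where this pure energy argument closes.

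Regarding your Strichartz route: the pair $(q,r) = (4,\infty)$ for the three-dimensional wave equation scales like $\dot H^{5/4}$, not $\dot H^1$ (the gap condition reads $\tfrac{1}{q} + \tfrac{3}{r} = \tfrac{3}{2} - s$). So you cannot control $\|u\|_{L^4_T L^\infty_x}$ by the $\H^1$-norm alone. A Strichartz-based persistence argument can certainly be made to work with a correct admissible pair, but given the elementary energy estimate above, there is no reason to introduce that machinery here.
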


\begin{proof}
The solution $\vec{u}= (u, v)$ to \eqref{NLW-sys_CP} satisfies
the following Duhamel formulation:
\begin{align}
\begin{split}
u(t) & = S(t) (u_0, v_0) - \int_0^t \frac{\sin((t - t')|\nb|)}{|\nb|} 
\pi_N\big((\pi_N u)^3\big)(t') dt', \\
v(t) & = \dt S(t) (u_0, v_0) - \int_0^t \cos((t - t')|\nb|)
\pi_N\big((\pi_N u)^3\big)(t') dt', 
\end{split}
\label{D1}
\end{align}

\noi
where $S(t)$
denotes the linear wave propagator given by 
\[S(t) (u_0, v_0) = \cos (t |\nb|)u_0 +  \frac{\sin (t |\nb|)}{|\nb|}v_0.\]

\noi
From the fractional Leibniz rule \eqref{prod} and \eqref{emb_b}, we have
\begin{align}
\begin{split}
\| u^3 \|_{H^{\s-1}} & \les \|  u \|_{B^{\s-1}_{6, 2}}\|u\|_{L^6}^2  
 \les \|  u \|_{H^{\s}}\|u\|_{H^1}^2
\end{split}
\label{D2}
\end{align}

\noi
for $\s \geq 1$.
Then, from \eqref{D1} and \eqref{D2}
with 
the conservation of the truncated energy $E_N$ in \eqref{E2}, 
we have\footnote{The factor $1+ |t|$ appears
 in controlling 
 the zeroth frequency: $\frac{\sin( (t - t') |\nb|)}{|\nb|} = t - t'$.}
\begin{align*}
\|\vec{u}(t) \|_{\H^\s}
& \leq \| (u_0, v_0)\|_{\H^\s}
+ C (1+ |t|) \int_0^t \|  u(t') \|_{H^{\s}}\|u(t') \|_{H^1}^2 dt'\\
& \leq \| (u_0, v_0)\|_{\H^\s}
+ C (1+ |t|)  \cdot E_N(u_0, v_0) \int_0^t \|  (u, v)(t') \|_{\H^{\s}} dt'.
\end{align*}

\noi
Hence, the growth bound \eqref{growth1} follows from 
Gronwall's inequality.

The approximation property (ii) follows from a modification of the local well-posedness argument.
Since the argument is standard, we omit details.
See, for example, our previous works:
Proposition 2.7 in \cite{Tzvet}
and Lemma 6.20/B.2 in \cite{OTz}.
\end{proof}


\section{Proof of Theorem \ref{THM:NLW}}
\label{SEC:PLAN}

In this section, we present the proof of Theorem \ref{THM:NLW}.
We first present a general framework of the strategy.
We then introduce a renormalized energy
and discuss further refinements required for our problem.
In Subsection \ref{SUBSEC:proof2}, we prove Theorem \ref{THM:NLW} by assuming
the construction of the weighted Gaussian measure
(Proposition \ref{PROP:QFT0})
and 
the renormalized energy estimate (Proposition~\ref{PROP:Nenergy}).
We present the proofs of Propositions~\ref{PROP:QFT0}
and~\ref{PROP:Nenergy} in Sections \ref{SEC:QFT} and \ref{SEC:ENERGY}.

\subsection{Strategy of the proof}

In \cite{Tzvet},
the third author introduced a general 
strategy, combining  PDE techniques and stochastic analysis to prove quasi-invariance of Gaussian measures
under nonlinear Hamiltonian PDE dynamics.
In the following, we briefly describe the key ideas behind this method \cite{Tzvet, OTz3},
using NLW on $\T^d$ as an example.
See also \cite{OTz2} for a survey on this subject.
Note that we keep our discussion at a formal level
and that some steps need to be justified
by working at the level of the truncated dynamics \eqref{NLW-sys_CP}.

Let $\Phi = \Phi_{\NLW}$ as in the previous section.
 In order to prove quasi-invariance of $\muu_s$ under $\Phi$,  
we would like to show $\muu_s(\Phi(t)(A)) = 0$
for any $t \in \R$ and any  measurable set $A \subset \H^\s(\T^d)$ with  $\muu_s(A) = 0$. 
Here, 
 $\s < s + 1 - \frac d2$ denotes 
 the regularity of samples on $\T^d$ under $\muu_s$. 
The main idea is to study the evolution of 
\begin{align*}
\muu_s(\Phi(t)(A))
=   Z_{s}^{-1}\int_{\Phi(t)(A)}
e^{-\frac 12 \|\vec u\|_{\H^{s+1}}^2} d\vec u
\end{align*}

\noi
for a general measurable set $A \subset \H^\s(\T^d)$
and to control  the growth of  $\muu_s(\Phi(t)(A))$ in time. 

The main goal is  show a differential inequality of the form:
\begin{align} \label{measure_diffeqn}
\frac{d}{dt} \muu_s(\Phi(t)(A)) \leq C p \big\{ \muu_s(\Phi(t)(A)) \big\}^{1 - \frac 1p}	
\end{align}

\noi
for all sufficiently large but finite $p$. Using \eqref{measure_diffeqn}, one can show that $\frac{d}{dt} \muu_s(\Phi(t)A)^\frac 1p$ is bounded, which yields quasi-invariance for short times after choosing $p$ appropriately. See \cite[Proposition 5.3]{OTz3} for details\footnote{This is a refinement of \cite[Lemma 7.3]{Tzvet}, based on an argument due to Yudovich \cite{Y}, which handles the simpler case, where $p$ in \eqref{measure_diffeqn} is replaced by $p^\beta$
for some  $0 \leq \beta < 1$.}. In this argument, the linear power of $p$ in the prefactor
 of the right-hand side of \eqref{measure_diffeqn} is crucial.

By applying a change-of-variable formula
(see Lemma \ref{LEM:cov} below), 
 we have
\begin{align}
\muu_s(\Phi(t)(A)) 
 \text{``}=\text{''}
Z_s^{-1}\int_{A} 
e^{-\frac 12  \|\Phi(t)(\vec u)\|_{\H^{s+1}}^2} d\vec u.
\label{A1}
\end{align}

\noi
For the truncated dynamics \eqref{NLW-sys_CP}, 
the formula \eqref{A1} can be justified
via invariance of the Lebesgue measure and bijectivity of the flow $\Phi_N$.
See Lemma \ref{LEM:cov} below.
Fix $t_0 \in \R$. 
Then, by taking a time derivative, we arrive at 
\begin{align}
\begin{split}
\frac{d}{dt} \muu_s(\Phi(t)(A)) \bigg|_{t = t_0}
& = -\frac12Z_s^{-1}\int_{\Phi(t_0)(A)} 
\frac{d}{dt} \bigg( \|\Phi(t)(\vec u)\|_{\H^{s+1}}^2\bigg)
e^{-\frac 12  \|\Phi(t)(\vec u)\|_{\H^{s+1}}^2} d\vec u\bigg|_{t = 0} \\
& =
-\frac 12 \int_{\Phi(t_0)(A)} 
\frac{d}{dt} \bigg( \|\Phi(t)(\vec u)\|_{\H^{s+1}}^2\bigg)\bigg|_{t = 0}
d\muu_s . 
\end{split}
\label{A1a}
\end{align}

\noi
This reduction of the analysis to 
 that at $t = 0$, exploiting the 
group property $\Phi(t_0 + t) = \Phi(t) \Phi(t_0)$
was inspired from  the work \cite{TzV1}.

Suppose that we had an effective energy estimate  (with smoothing) of the form:
\begin{align}
\frac{d}{dt} \|\Phi(t) (\vec u)\|_{\H^{s+1}}^2\bigg|_{t = 0}
\text{``}\leq\text{''} \, C(\| \vec u  \|_{\H^1})
\| \vec u \|_{\vec \C^{\s}}^{\theta}
\label{A2}
\end{align}

\noi
for some $\theta \leq 2$.
Then, the desired estimate
\eqref{measure_diffeqn} would follow
from \eqref{A1a} and \eqref{A2} 
(with 
an additional  cutoff on the conserved energy $E(\vec{u})$ in \eqref{A1}). 
More precisely, 
we obtain~\eqref{measure_diffeqn}
by 
inserting \eqref{A2} into \eqref{A1a}, 
 applying H\"older's inequality with respect to $\muu_s$, and 
 then using the Wiener chaos estimate (Lemma \ref{LEM:hyp}) to obtain (sub-)linear $p$ dependence on the $L^p(\muu_s)$-norm of $\| \vec u \|_{\vec \C^{\s}}^{\theta}$, 
 while 
we   use a cutoff on the conserved energy $E(\vec{u})$ to control $C(\|\vec{u}\|_{\H^1})$ in~\eqref{A2}.
   See the proof of Proposition \ref{PROP:meas2} below for the full detail.
 We emphasize that,  due to the $p$-dependence of the constant in the Wiener chaos estimate 
 (Lemma \ref{LEM:hyp}), we can only afford to place two factors of 
$\vec u$ in the stronger H\"older-Besov  $\vec \C^{\s}$-norm in the energy estimate~\eqref{A2}; a higher order Wiener chaos would cause superlinear $p$-dependence in \eqref{measure_diffeqn}. All the other factors are then placed in the (weaker) $\H^1$-norm, 
which is controlled by the conserved energy $E(\vec u)$ in \eqref{E2}.

In  \cite{Tzvet}, 
the third author established 
an energy estimate of the form \eqref{A2} for the BBM equation
by consideration  
in the spirit of  quasilinear hyperbolic PDEs
(namely, integration by parts in $x$).
Unfortunately, an energy estimate of the form \eqref{A2} 
does not hold in general for nonlinear Hamiltonian PDEs.
In \cite{OTz, OTz3}, 
the second and third authors 
circumvented this problem by 
 introducing a modified energy:
\begin{align*}
E_s(\vec u) = \frac 12 \|\vec u\|_{\H^{s+1}}^2 + R_s(\vec u)
\end{align*}

\noi
with a suitable correction term $R_s(\vec u)$ 
such that the desired energy estimate of the form~\eqref{A2}
holds for this modified energy.
 By following the strategy described above,
they first established
  quasi-invariance of the weighted Gaussian measure
associated with this modified energy:
\[d\rhoo_s = Z_s^{-1} e^{-E_s(\vec u)} d\vec u = Z_s^{-1} e^{-R_s(\vec{u})} d\muu_s \]

\noi
(with a cutoff on a conserved quantity).
Then, 
 quasi-invariance of $\muu_s$ followed from 
the mutual absolute continuity of  $\muu_s$ and $\rhoo_s$.

For  Schr\"odinger-type equations, 
 modified energies were introduced
 by the normal form method (namely, integration by parts in time);
 see \cite{OTz, OST, FT}.
In \cite{OTz3}, 
the second and third authors derived a modified
energy for NLW on $\T^2$
based on integration by parts in $x$
but a certain renormalization was needed to control singularity.
We will 
describe  the details
of this derivation 
in the next subsection.

\smallskip

\noi
{\bf Summary:}
The study of quasi-invariance has therefore been reduced to two steps: 
(i)~the construction of the weighted Gaussian measure $\rhoo_s$
and  (ii) establishing
an effective energy  estimate on $\partial_t E_s(\vec{u})\big|_{t=0}$.

\subsection{Renormalized energy for  NLW}
\label{SUBSEC:cor}
In this subsection, 
we present a discussion on a modified energy
for our problem.
See \eqref{Es} below for the full modified energy.
In the following, 
we fix $ \s = s +1 - \frac d2 - \eps \geq  1 $ for some small $\eps > 0$
and let $B_R$ denotes the ball of radius $R >0$  in $\H^\s(\T^d)$ centered at the origin. 
Fix a frequency  cutoff size $N$ and, 
instead of using (a suitable truncated version of) 
the energy of $\muu_s$, let us consider  the following natural energy to work with for the wave equation (see Remark \ref{REM:zero_freq}):
\[ \frac 12 \int_{\T^d} (D^s v_N)^2 + \frac 12 \int_{\T^d} (D^{s+1}u_N)^2, \]

\noi
where $D^s = (-\Delta)^{\frac s2}$ denotes the Riesz potential of order $s$. 
Fix an even integer $s \geq 4$ and 
let  $\vec{u}  = (u, v)$ be a solution to the truncated NLW~\eqref{NLW-sys_CP}.
Then,   the Leibniz rule yields
\begin{align}
\begin{split}
 \dt \bigg[\frac 12  \int_{\T^d} (D^s v_N)^2&  + \frac 12 \int_{\T^d} (D^{s+1}  u_N)^2
\bigg]  
  =  \int_{\T^d} (D^{2s} v_N)(-u_N^3) \\
& =
 -3\int_{\T^d} D^sv_N D^s u_N\, u_N^2  \\
& \hphantom{X}
+ \sum_{\substack{ |\al|+|\be|+|\g| = s\\
 |\al|,|\be|,|\g|<s
}}
c_{\al,\be,\g}
\int_{\T^d}
D^sv_N\cdot\dd^\al u_N\cdot \dd^\be u_N\cdot \dd^\g u_N
\end{split}
\label{H1b}
\end{align}

\noi
for some combinatorial constants $c_{\al,\b,\g}$ that depend only on $s$,
 where 
$\dd^\al$ denotes $\dd_{x_1}^{\al_1} \cdots \dd_{x_d}^{\al_d}$
for a multi-index $\al = (\al_1, \dots, \al_d)$.
%
Samples $\vec{u}$ under the Gaussian measure $\muu_s$
belong almost surely to 
$\vec \C^\s(\T^d)\setminus \vec \C^{s+1 - \frac d2}(\T^d)$ for $\s < s +1 - \frac {d}{2}$. 
The main issue is how to treat
 $D^s v_N$ on the right-hand side of \eqref{H1b} due to its low regularity $\s - 1$. 
It turns out that all but the first term on the right-hand side of \eqref{H1b} 
can be treated by integration by parts.  See Remark~\ref{REM:IBP}.
 As for the first term, recalling from \eqref{NLW-sys_CP} that $v_N = \dt u_N$, 
 we have 
\begin{align}
-3\int_{\T^d} D^sv_N  D^s u_N\,  u_N^2
 = - \frac 32 \dt \bigg[\int_{\T^d} (D^su_N)^2u_N^2 \bigg]
+ 3 \int_{\T^d} (D^su_N)^2\, v_N u_N .
\label{H1}
\end{align}

\noi
The terms on the right-hand side of \eqref{H1} are better behaved than 
that on the left-hand side since $D^s$ no longer falls on the less regular term $v$.
This motivates us to define a modified energy with  a correction term of the form:
\begin{equation*}
R_s(\vec{u}) = 
\frac{3}{2} \int_{\T^d} (D^s u_N)^2 u_N^2.
\end{equation*}

When $d=1$, 
this choice of the correction term allows
us to define a suitable modified energy
and to construct the weighted Gaussian measure 
associated with this modified energy
(modulo an issue at the zeroth frequency).
When $d = 2$ or 3, however, 
we have $u \notin \C^s(\T^d)$ almost surely
and thus the limiting expression
$(D^s u)^2$  
is ill defined since it is the square of a  distribution of negative regularity.
 Moreover, the singular term $(D^s u)^2$ appears in both terms 
on the right-hand side of \eqref{H1}.
As such, we have issues at the level of both the energy and its time derivative,  which propagate to both the construction of the weighted Gaussian measure and  the energy estimate.

Motivated by Euclidean quantum field theory, we introduce a renormalization. 
This amounts to
replacing $(D^s u)^2$ by $(D^s u)^2 - \infty$, suitably interpreted;
given $N \in \N$, we replace $(D^s u_N)^2 $  in \eqref{H1} by 
$\DD$, where 
\begin{align}
Q_{s, N}(f) \stackrel{\text{def}}{=} (D^sf)^2 - \s_N
\label{H1c}
\end{align}

\noi
and $\s_N$ is given by 
\begin{align}
\s_N \stackrel{\text{def}}{=}
\E_{\muu_s}
\Big[ (D^s \pi_N u)^2\Big] 
\sim 
 \sum_{\substack{n \in \Z^d\\1\leq |n|\leq N}} 
\frac{1}{|n|^2}
\sim \begin{cases}
\log N & \text{for } d = 2, \\
N & \text{for } d = 3, 
\end{cases}
\label{sigma0}
\end{align}

\noi
as $N \to \infty$. 
The crucial observation in \cite{OTz3}
is that 
 the effect of the renormalization for the two terms on the right-hand side 
 in \eqref{H1} 
precisely  cancels each other, since
\begin{align*}
- \frac 32 \s_N \dt \bigg[\int_{\T^d} u_N^2 \bigg]
+ 3 \s_N \int_{\T^d} v_N u_N  = 0,
\end{align*}

\noi
where we used the equation \eqref{NLW-sys_CP}.
As a result,  we obtain
\begin{align}
-3\int_{\T^d} D^sv_N & D^s u_N\,  u_N^2
= - \frac 32 \dt \bigg[\int_{\T^d}  \DD u_N^2 \bigg]
+ 3 \int_{\T^d} \DD   v_N u_N .
\label{H2}
\end{align}

\noi
In view of \eqref{H1b} and \eqref{H2}, we
define the renormalized energy $\HH_{s, N}(\vec{u})$ by 
\begin{equation}
\HH_{s, N}(\vec{u})  = 
 \frac 12 \int_{\T^d} (D^{s+1}  u)^2
+  \frac 12  \int_{\T^d} (D^s v)^2 
 + \frac 32  \int_{\T^d} \DD u_N^2 .
\label{H3}
\end{equation}

\noi
Then, we have 
\begin{align}
\begin{split}
\dt \HH_{s, N}(\vec{u})  
& = 3 \int_{\T^d} \DD   v_N u_N \\
& \hphantom{X} + \sum_{\substack{ |\al|+|\be|+|\g| = s\\
 |\al|,|\be|,|\g|<s
}}
c_{\al,\be,\g}
\int_{\T^d}
D^sv_N\cdot\dd^\al u_N\cdot \dd^\be u_N\cdot \dd^\g u_N.
\end{split}
\label{H3a}
\end{align}

\noi
Note that we have renormalized both the energy and its time derivative at the same time.
The considerations above motivate the definition of the renormalized weighted Gaussian measure:
\begin{equation}
d\rhooo_{s,r,N} = Z_{s,N,r}^{-1}\textbf{1}_{\{E_N(\vec{u}) \leq r\}} e^{-\HH_{s, N}(\vec{u})} d\vec{u}, 
\label{cutoff_rho}
\end{equation}

\noi
where $E_N(\vec u)$ is as in \eqref{E2}.
The energy cutoff in \eqref{cutoff_rho} is necessary to construct this measure due to an issue with the zeroth frequency  (see Remark \ref{REM:zero_freq}).

\begin{remark}\rm

If $\vec u$ is distributed according to 
the Gaussian measure $\muu_s$,
then 
we can apply  Wick renormalization to $(D^s u_N)^2$
and obtain the  Wick power $ :\!(D^s u_N)^2\!:$.
Here, Wick renormalization corresponds 
 the orthogonal projection onto 
a (second) homogeneous Wiener chaos under $L^2(\muu_s)$.
In this case, we have 
\[ :\!(D^s u_N)^2\!: \,   =  \DD.\]

\noi
This renormalization allows us to take a limit
$:\!(D^s u)^2\!: \, = \lim_{N \to \infty} :\!(D^s u_N)^2\!:$ in a suitable space
(see Lemmas \ref{LEM:QFT2} and \ref{LEM:Y} below).
In the discussion above for deriving the renormalized energy $\HH_{s, N}$, however, 
$\vec u$  denotes a solution
to \eqref{NLW-sys_CP}
and  a notation such as $:\!(D^s u_N)^2\!:$ is not well defined.
This is the reason  we needed to introduce
$Q_{s, N}$ in \eqref{H1c}.

\end{remark}

\begin{remark}\rm

This simultaneous renormalization of the energy and its time derivative 
does not introduce any modification to the original truncated equation \eqref{NLW-sys_CP}
since its Hamiltonian $E_N(\vec u)$ remains unchanged.
We also point out two (related) interesting observations: 
(i)~renormalization is usually applied in the handling of rough functions, whereas we use renormalization in
the context of high regularity solutions,
 and (ii)~the simultaneous renormalization is introduced only as a tool to prove Theorem \ref{THM:NLW}. 

\end{remark}

\begin{remark} \label{REM:IBP}\rm
	In view of the regularity of $\vec{u}$ under $\muu_s$, it may seem that some of the lower order terms under the sum on the right-hand side of \eqref{H3a} are divergent as $N \to \infty$: for example, when $|\al| = s-1$, $|\be| = 1$, and $\g = 0$.
	However, by integration by parts (in $x$) and the independence of $u$
	and $v$, they turn out to be convergent without any renormalization.
	See Propositions \ref{PROP:random_distr} and \ref{PROP:energyestimate}.
\end{remark}

\medskip

\noi
$\bullet$ {\bf Problem (i): Construction of the weighted Gaussian measure.} The problem of constructing the limiting weighted Gaussian measure
measure $\rhooo_{s,r} = \lim_{N \rightarrow \infty} \rhooo_{s,r,N}$ bears some similarity with the problem of constructing the 
$\Phi^4$-measures. 
First of all, the need for renormalization in~\eqref{H3}  means that the positivity of the random variable 
$\int (D^s u)^2 u^2$ is destroyed. 
Moreover,  there is a similarity between the measures themselves;
despite not having the simple algebraic structure of the $\Phi^4$-measure, the term $\int (D^s u)^2 u^2$ is quartic in $u$.

In \cite[Proposition 3.1]{OTz3}, the second and third authors exploited these similarities 
and modified Nelson's construction of  the $\Phi^4_2$-measure
to construct the desired weighted Gaussian measure $\rhooo_{s, r}$ in the two-dimensional case. We summarize the argument for the reader's convenience; let 
$X_N = \int_{\T^2} :\!(D^s u_N)^2\!: u_N^2  $ and $X$ be its limit as $N \rightarrow \infty$. Using the energy cutoff, we have $X_N \geq -C_r \log N$, which is the scale at which the semi-boundedness of $X_N$ blows up in the frequency cutoff parameter $N$. For any $K > 0$, we have the following decomposition.
\begin{equation}
\mu_s \left( e^{-X} > e^K \right) \leq \mu_s \left( e^{X_N-X} > e^{K - C_r\log N} \right).
\end{equation}
One then chooses the frequency cutoff parameter $N$ so that 
the rate $C_r \log N$ of divergence of $X_N$ matches the tail integrability parameter $K$. This gives a good tail bound on $e^{-X}$ and establishes its integrability. Note that this argument heavily uses the logarithmic divergence rate \eqref{sigma0}
of the renormalization constants 
when $d = 2$ (although one can show that it still holds for divergences of rate $N^\eps$ for  small $\eps > 0$). As a consequence, it breaks down in the three-dimensional case due to the stronger algebraic divergence rate \eqref{sigma0}
of the renormalization constants $\s_N$; see Remark 3.6 in~\cite{OTz3}. 

In order to construct $\rhooo_{s,r}$, we use the techniques introduced 
in a recent paper~\cite{BG} by Barashkov and Gubinelli, where the partition functions of the $\Phi^4_{2}$- and $\Phi^4_3$-measures were analyzed by way of variational formulas. 
We, however, point out that the construction of $\rhooo_{s,r}$ is much easier than the construction of 
the $\Phi^4_3$-measure. The fundamental reason for this difference 
is that in the term 
$\int_{\T^3} (D^su)^2 u^2 $,  one takes the second power of the irregular distribution $D^s u$, rather than the fourth power. Indeed, in the case of the $\Phi^4_3$-measure,  
renormalization beyond Wick ordering is required, resulting in the measure being singular with respect to its underlying the Gaussian measure \cite{BG2}. By contrast, we show that the measures $\rhooo_{s,r}$ require only Wick ordering and are in fact still absolutely continuous with respect to the underlying Gaussian measure.\footnote{In order to avoid an issue at the zeroth frequency, 
we  need to make a modification to the renormalized energy 
$\HH_{s, N}(\vec{u})$.
This  leads to a slightly different weighted Gaussian measure.  See~\eqref{Es}, \eqref{K1}, and~\eqref{K0} below.} Moreover, we are able to construct $\rhooo_{s,r}$, 
 using a simpler version of the variational approach used in \cite{BG},  more precisely, based on \cite[Lemma 1]{BG} rather than \cite[Theorem 2]{BG};
 see Remark \ref{REM:BD}. It may be possible to construct $\rhooo_{s,r}$ using more classical techniques developed in the subtler context of constructing the $\Phi^4_3$-measure, such as phase cell expansions \cite{GJ} or renormalization group methods \cite{GA}. We choose to use the variational approach because it leads to a relatively simple argument and, moreover, has more common mathematical ground with the analysis of wave equations and other dispersive PDEs (i.e.~making heavy use of harmonic analysis). 
 We also mention recent works
 \cite{ORSW, OOT1, Bring, OSTolo, Rob}, 
where the  variational approach was used
in the construction of invariant measures for 
 dispersive PDEs.

One technical issue with the construction of $\rhooo_{s,r}$ is that it is not clear whether the term $\int (D^s u)^2 u^2$ is good enough to control the large-scale behavior (= low frequency part) of $u$.
In the following, we circumvent this problem by introducing a new renormalized energy $E_{s, N}(\vec u)$ in \eqref{Es}
by adding  the energy $E_N(\vec u)$ in \eqref{E2} (plus an extra term
controlling the zeroth Fourier coefficient of $u$) to the renormalized energy $\HH_{s,N}(\vec{u})$ in \eqref{H3}.
This allows us to use the potential energy term~$\frac 14 \int u_N^4$
in \eqref{E2} to  get rid of the need of the energy cutoff 
$\textbf{1}_{\{E_N(\vec{u}) \leq r\}}$.
The effect is to change the underlying Gaussian measure $\muu_s$ to a different Gaussian measure  $\nuu_s$, which will be shown to be equivalent to $\muu_s$ by Kakutani's theorem. 
See Lemma \ref{LEM:equiv} below.

\medskip

\noi
$\bullet$ {\bf Problem (ii): Energy estimate.}
In the two-dimensional case \cite{OTz3}, 
it was not possible to establish an energy estimate 
of the form \eqref{A2}.
Instead, it was shown that 
\begin{align}
\Big| \dt E_{s, N}(\pi_N \Phi_N(t)(\vec{u}))|_{t = 0} \Big|
\les C(\|\vec{u}\|_{\H^1}) F(\vec{u}).
\label{Nest1}
\end{align}

\noi
for a suitable renormalized energy.
Here, $F(\vec{u})$ denotes complicated expressions that contain high regularity information on $\vec{u}$ 
such as the $\vec{W}^{\s,\infty}$-norm as well as 
the renormalized second power $\int_{\T^2} \DD $.
As mentioned above, all but two factors need to be placed in the weaker $H^1$-norm so that $F(\vec{u})$ is at most quadratic in $\vec{u}$, which implies that $F(\vec{u}) \in \mathcal{H}_2$.
 This allows us to obtain  the right growth bound 
 of the form \eqref{measure_diffeqn}
 after applying the Wiener chaos estimate (Lemma \ref{LEM:hyp}).
Here, it is crucial to study the energy 
estimate \eqref{Nest1}
at time $t=0$ to exploit the Gaussian initial data in in \eqref{series}.
 In \cite{OTz3}, 
the energy estimate~\eqref{Nest1} involved a delicate quadrilinear Littlewood-Paley expansion
balancing  the interplay between the energy conservation and the higher order regularity. 
As pointed out in \cite{OTz3}, the estimate of the form \eqref{Nest1} fails for the three-dimensional case.
 
In a recent paper \cite{PTV}, 
Planchon, Visciglia, and the third author proved 
quasi-invariance of the Gaussian measures
under the dynamics of the (super-)quintic nonlinear Schr\"odinger equations (NLS)
 on $\T$
  by establishing a novel 
  energy estimate.
   The idea is to exploit
a deterministic growth bound  \eqref{growth1}  on solutions. 
Then, the required energy estimate takes 
the following form:\footnote{In the case of NLS, we have $u$ instead of $\vec{u} = (u, v)$.
For the sake of presentation, we keep the notation adapted to the NLW  context.}
\begin{align}
\Big| \dt E_{s, N}(\pi_N \Phi_N(t)(\vec{u}) )\Big|
\le C \big(1 + \|\Phi_N(t) (\vec{u}) \|_{\H^\s}^k\big).
\label{Nest4}
\end{align}

\noi
Here, $k > 0$ can be {\it any} positive number.
The main point is that if we start dynamics with a measurable set $A \subset B_R $,
then  \eqref{Nest4} with the growth bound \eqref{growth1} yields
\begin{align*}
\Big|\ind_{A}(\vec{u}) \cdot  \dt E_{s, N}(\pi_N \Phi_N(t)(\vec{u})) \Big|
\leq C\Big|\ind_{B_{C(R, T)}}(\vec{u})\cdot 
 \big(1 + \|\vec{u} \|_{\H^\s}^k\big) \Big|
\le C(R)^k
\end{align*}

\noi
for any $t \in [0, T]$ and $N \in \N \cup\{\infty\}$.
This control allows us to  prove quasi-invariance
for each measurable set $A \subset B_R$
(in the sense of \eqref{N2} below).
Then, by a soft argument, we can conclude quasi-invariance
of the Gaussian measure $\muu_s$. The main advantage of this argument 
is that we are allowed to place any power $k$ 
in the stronger $\H^\s$-norm. 
Note that the energy estimate \eqref{Nest4} is entirely deterministic
and hence there is no need to reduce the analysis to time $t= 0$.

In this paper, we combine these two approaches described above 
and establish 
an energy estimate of the form:
\begin{align}
\Big| 
\ind_{B_{R}}(\vec{u})\cdot 
\dt E_{s, N}(\pi_N \Phi_N(t)(\vec{u}))|_{t = 0} \Big|
\le \ind_{B_R}(\vec{u}) C(\|\vec{u}\|_{\H^\s}) F(\vec{u}) 
\le
C(R)F(\vec{u}). 
\label{Nest7}
\end{align}

\noi
In the actual application of this energy estimate, 
in place of $\ind_{B_R}(\vec u)$ in~\eqref{Nest7}, 
we have $\ind_{\Phi_N(t_0)(B_R)}(\vec u)$ for some $0 < t_0\le T$, 
which can be majorized by 
$\ind_{B_C(R, T)}(\vec u)$
thanks to the deterministic growth bound~\eqref{growth1};
see  \eqref{eq growth}.
As for  $F(\vec u)$ 
in~\eqref{Nest7}, 
we  use the Wiener chaos estimate (Lemma~\ref{LEM:hyp}). 
The fact that we have access to the stronger $\H^\s$-norm (rather than $\H^1$-norm as in \eqref{Nest1})
allows us to get by with a softer energy estimate.
Moreover, 
in our case, $F(\vec{u})$ is given in an  explicit manner (see Proposition~\ref{PROP:energyestimate}). 
It contains products of derivatives of $u_N$ and $v_N$
 as well as the $\C^{-1-\eps}$-norm of the Wick power
$\DD = (D^s u_N)^2 -\s_N$.
By proceeding as in~\cite{MWX}, 
we establish regularity properties
of these random distributions
in Proposition \ref{PROP:random_distr}.
These two points lead to 
 a significantly simpler proof of quasi-invariance
than  the two-dimensional case \cite{OTz3}.

\begin{remark}\label{REM:new1}\rm

Following the discussion of Remark (iv) in Subsection \ref{SEC1:2}, 
one might attempt to
implement an analogous construction of weighted Gaussian measure in the case of 
NLW  with a higher order nonlinearity or in higher dimensions.
Higher order nonlinearities would result in a higher power of the regular
part of the renormalized energy, while the singular part would remain
quadratic, i.e.~$(D^s u)^2$. Thus, the construction of these measures
seems tractable. This is in sharp contrast with the construction of the $\Phi_3^{2n}$ measures, 
where higher order nonlinearities result in higher powers of distributions which makes the construction of such measures unclear (for $n \geq 3$).
On the other hand, higher dimensions in our case would result in a more singular quadratic part.

\end{remark}

\subsection{Statements of key results}
\label{SUBSEC:proof1}

In the remaining part of this paper, we fix $d = 3$.
In this subsection, we  introduce a new renormalized energy
and then state the key propositions in proving Theorem \ref{THM:NLW}.

We first introduce a new Gaussian measure, 
whose energy is
more suitable for analysis on NLW
(but still controls the zeroth frequency).
Define  a Gaussian measure $\nuu_{s} $ via the following  Karhunen-Lo\`eve expansions:
\begin{align}
\begin{split}
\label{series4}
u^\o(x) &= g_0(\omega) + \sum_{n \in \Z^3\setminus \{0\}} \frac{g_n(\o)}
{
(|n|^2+|n|^{2s+2})^{\frac{1}{2}}
}e^{in\cdot x},  \\
v^\o(x) &= \sum_{n \in \Z^3} 
\frac{h_n(\o)}{
(1+|n|^{2s})^{\frac{1}{2}}
}e^{in\cdot x}, 
\end{split}
\end{align}

\noi 
where  $\{ g_n \}_{n \in \Z^3}$ and $\{ h_n \}_{n \in \Z^3}$
are as in \eqref{series}.
Then, the formal density of 
$\nuu_s$ is given by 
\begin{align*}
d \nuu_s = Z_s^{-1} 
e^{-H_s(\vec{u})}d\vec{u}, 
\end{align*}

\noi
where 
\begin{equation}\label{lili}
H_s(\vec{u}) =  \frac 12 \bigg(\int_{\T^3}  u\bigg)^2 + \frac 12 \int_{\T^3}  |\nb u|^2 +
 \frac 12 \int_{\T^3} (D^{s+1} u)^2 
+    \frac 12 \int_{\T^3} v^2+ \frac 12  \int_{\T^3} (D^s v)^2.
\end{equation}

\begin{lemma} \label{LEM:equiv}
Let $s > \frac 34$. Then, the Gaussian measures $\muu_s$ and $\nuu_s$ are equivalent.
\end{lemma}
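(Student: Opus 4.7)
The plan is to invoke the Feldman--H\'ajek / Kakutani theorem for equivalence of product Gaussian measures. By \eqref{series} and \eqref{series4}, both $\muu_s$ and $\nuu_s$ decompose as products of independent centered Gaussians indexed by Fourier modes, with diagonal covariance structure in a common basis. The classical Kakutani dichotomy then reduces equivalence to the summability
\[
\sum_{n \in \Z^3} \bigl(\rho_n^u - 1\bigr)^2 + \sum_{n \in \Z^3} \bigl(\rho_n^v - 1\bigr)^2 < \infty,
\]
where $\rho_n^u$ and $\rho_n^v$ denote the ratios of $\muu_s$-variance to $\nuu_s$-variance at the $n$-th Fourier mode of $u$ and of $v$, respectively. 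The rest of the argument is then a mode-by-mode variance computation together with an asymptotic expansion at high frequencies.

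For the $u$-component, the $n = 0$ mode carries a unit-variance Gaussian under both measures (note $\jb{0}^{-(s+1)} = 1$), contributing nothing. For $n \neq 0$ I would compute
\[
\rho_n^u = \frac{|n|^2 + |n|^{2s+2}}{\jb{n}^{2s+2}} = 1 + O(|n|^{-2}) \quad \text{as } |n| \to \infty,
\]
so that $\sum_{n \neq 0} (\rho_n^u - 1)^2 \lesssim \sum_{n \neq 0} |n|^{-4}$, which is finite in three dimensions with no restriction on $s$.

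For the $v$-component, $\rho_n^v = (1 + |n|^{2s})/(1 + |n|^2)^s$. Expanding $(1 + |n|^2)^s = |n|^{2s}(1 + s|n|^{-2} + O(|n|^{-4}))$ at infinity, one obtains
\[
\rho_n^v - 1 = O\bigl(|n|^{-\min(2,\, 2s)}\bigr),
\]
the $|n|^{-2s}$-decay being the binding one when $s \leq 1$. Squaring yields at worst $O(|n|^{-4s})$, and summability over $\Z^3$ requires $4s > 3$, i.e.\ $s > \tfrac{3}{4}$, precisely the stated hypothesis. Together with the $u$-component estimate, this verifies the Kakutani criterion.

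The only delicate point is identifying where the threshold $s > \tfrac{3}{4}$ originates: it is forced by the $v$-variances, since the mismatch between $\jb{n}^{-2s}$ and $(1 + |n|^{2s})^{-1}$ produces a relative perturbation of size $|n|^{-2s}$ whose square is square-summable in $\Z^3$ only when $4s > 3$. The $u$-component is comparatively benign, as the discrepancy there is always $O(|n|^{-2})$ and hence summable in three dimensions regardless of $s$.
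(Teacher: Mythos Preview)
Your approach via Kakutani's theorem is exactly the one the paper indicates (it cites Kakutani and refers to the two-dimensional analogue in \cite{OTz3}), and the overall conclusion is correct. There is, however, a small inaccuracy in your treatment of the $u$-component: for $n \neq 0$ one has
\[
\rho_n^u = \frac{|n|^{-2s} + 1}{(1 + |n|^{-2})^{s+1}} = 1 + |n|^{-2s} - (s+1)|n|^{-2} + O(|n|^{-4}),
\]
so in the range $\tfrac{3}{4} < s < 1$ the leading correction is $|n|^{-2s}$, not $|n|^{-2}$, and the $u$-sum is \emph{not} finite ``with no restriction on $s$'' as you claim. This is harmless for the lemma, since under the hypothesis $s > \tfrac{3}{4}$ one still has $\sum_{n \neq 0} (\rho_n^u - 1)^2 \lesssim \sum_{n \neq 0} |n|^{-4s} < \infty$; but your identification of the $v$-component as the sole source of the threshold is not quite right---both components impose $s > \tfrac{3}{4}$ when $s \le 1$.
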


The proof of this lemma is based on a simple application of Kakutani's theorem \cite{Kakutani};
see the proof of Lemma 6.1 in \cite{OTz3} for details in the two-dimensional case.

\begin{remark}
\label{REM:zero_freq}\rm

The linear wave equation conserves the homogeneous Sobolev norm:
\begin{equation*}
\| \vec{u} \|_{\vec{\dot{H}}^{s+1}}^2 =  \int_{\T^3} (D^{s+1} u)^2 +  \int_{\T^3} (D^s v)^2.
\end{equation*}

\noi
Hence, we would like to work with Gaussian measures with formal density 
$e^{-\frac 12 \| u \|_{\vec{\dot{H}}^{s+1}}^2}$. 
These measures do not exist as probability measures since the zeroth frequency  is not controlled. 
This is the reason  we chose to include $g_0(\omega)$
in \eqref{series4}, giving rise to the first term in $H_s(\vec u)$ defined in \eqref{lili}.

As we see below, 
we add the truncated energy $E_N(\vec u)$ in \eqref{E2}
to construct  the full renormalized energy,
which explains the appearance of 
 the terms with $|\nabla u|^2$ and $v^2$ 
 in~\eqref{lili}.
 This addition of the truncated energy $E_N(\vec u)$
 allows us to include the quartic potential energy $\frac 14 \int u_N^4$
 without changing the time derivative
of the renormalized energy; see \eqref{H8}. 
We point out that  this quartic homogeneity plays an important role in the construction of the weighted Gaussian measure.  
\end{remark}

Given $N \in \N$, we  {\it redefine} the parameter $\s_N$, 
adapted to the new Gaussian measure $\nuu_s$,  by 
\begin{align}
\s_N \stackrel{\text{def}}{=}
\E_{\nuu_s}
\Big[ (D^s u_N)^2\Big] 
= \sum_{\substack{n \in \Z^3\\1\leq |n|\leq N}} 
\frac{|n|^{2s}}{|n|^2+|n|^{2s+2}}\sim  N
\longrightarrow \, \infty
\label{sigma}
\end{align}

\noi
as $N \to \infty$.
We also redefine the operator $Q_{s, N}$ in \eqref{H1c}
with this new definition of $\s_N$.
In the remaining part of this paper, 
we will use these new definitions for $\s_N$ and 
$Q_{s, N}$.

We now 
define the full renormalized  energy $E_{s, N}(\vec{u})$ by
\begin{align}
E_{s, N}(\vec{u})=\HH_{s, N}(\vec{u})+E_N(\vec{u}) + \frac 12 \bigg(\int_{\T^3}  u_N\bigg)^2 , 
\label{Es}
\end{align}

\noi
where $\HH_{s,N}$ is as in \eqref{H3} and $E_N$ is the truncated energy in \eqref{E2}.
\noi
Then, 
it follows from~\eqref{H3a} and   the conservation of 
the truncated energy that
\begin{align}
\begin{split}
\dt E_{s, N}(\vec{u}) 
&  =  
 3 \int_{\T^3}  \DD   v_N  u_N \\
& \hphantom{XX}
+ \sum_{\substack{ |\al|+|\be|+|\g|=  s\\
|\al|,|\be|,|\g|<s
}}
c_{\al,\be,\gamma}
\int_{\T^3}
D^sv_N\cdot \dd^\al u_N \cdot \dd^\be u_N \cdot \dd^\g u_N \\
& \hphantom{XX} +  \bigg(\int_{\T^3}  u_N\bigg) \bigg( \int_{\T^3} v_N \bigg)
\end{split}
\label{H8}
\end{align}

\noi
for any solution  $\vec{u}$ to the truncated NLW \eqref{NLW-sys_CP}.
Moreover, from \eqref{lili}, we have
\begin{align*}
E_{s, N}(\vec{u})=H_s(\vec{u})+ R_{s, N}(u), 
\end{align*}

\noi
where
\begin{align}
\begin{split}
R_{s, N}(u) 
& =  \frac 32  \int_{\T^3} \DD  u_N^2  +\frac{1}{4}\int_{\T^3}u_N^4\\
& =  \frac 32  \int_{\T^3} \Big( (D^s u_N)^2 - \s_N \Big)  u_N^2 
+\frac{1}{4}\int_{\T^3}u_N^4.
\end{split}
\label{K1}
\end{align}

We are now ready to state the two key ingredients 
for proving Theorem \ref{THM:NLW}:
(i) the construction of the weighted Gaussian measures
and (ii) the renormalized energy estimate.

Define the weighted Gaussian measure $\rhoo_{s, N}$ by
\begin{align}
\begin{split}
d\rhoo_{s, N}(\vec{u})
& = \ZZ_{s,N}^{-1}  e^{-R_{s, N}(u)} d \nuu_s(\vec{u}),
\end{split}
\label{K0}
\end{align}
\noi where $\ZZ_{s,N}$ is the normalization constant. 
The following proposition 
 establishes  uniform integrability
of the density $ e^{-R_{s, N}(u)}$ in~\eqref{K0}, 
which allows us to construct 
 the limiting weighted Gaussian measure $\rhoo_{s}$ by 
\begin{align*}
d\rhoo_{s}(\vec u) 
& = \ZZ_{s}^{-1} e^{-  R_s(u)} d \nuu_s(\vec u ),
\end{align*}

\noi
where $R_s(u)$ is a limit of $R_{s, N}(u)$; see Lemma \ref{LEM:QFT2}.

\begin{proposition}[Construction of the weighted Gaussian measure]\label{PROP:QFT0}
Let $ s > \frac 32$.
Then, 
the weighted Gaussian measures 
$\rhoo_{s, N}$ converges strongly 
to $\rhoo_{s}$.
Namely, we have
\begin{align*}
\lim_{N\to \infty} \rhoo_{s, N}(A) =   \rhoo_{s}(A)
\end{align*}

\noi
for  any measurable set 
$A \subset \H^\s(\T^3)$, $\s < s - \frac 12$. 
Moreover, given any finite $p\geq 1$, the sequence $\big\{ e^{-R_{s, N}(u)}\big\}_{N \in \N}$
and $e^{-R_s(u)}$ are uniformly bounded in $ L^p(\nuu_s)$. 
 As a consequence, $\rhoo_{s}$
is equivalent to 
$\nuu_s$.

\end{proposition}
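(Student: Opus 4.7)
The plan is to follow the variational approach of Barashkov--Gubinelli. First, note that $R_{s,N}$ decomposes into a singular Wick term $\frac{3}{2}\int :\!(D^s u_N)^2\!: u_N^2$ and the manifestly positive quartic $\frac{1}{4}\int u_N^4$. The Wick square $:\!(D^s u_N)^2\!:$ belongs to the second Wiener chaos, and the variance computation underlying Lemma~\ref{LEM:MWX}, combined with $\E\bigl[|\ft{D^s u}(n)|^2\bigr]\sim |n|^{-2}$ (see~\eqref{series4}), shows that in three dimensions $:\!(D^s u_N)^2\!:$ is uniformly bounded in $L^p(\nuu_s;\C^{-1-\eps})$ and Cauchy in this space as $N \to \infty$. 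Since samples of $\nuu_s$ a.s.\ lie in $\C^{s-\frac12-\eps}(\T^3)$ and $s > \frac 32$, the product estimate~\eqref{prod2} and duality~\eqref{dual} yield an $L^p(\nuu_s)$ limit $R_s(u) = \lim_{N\to\infty} R_{s,N}(u)$.

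The main step is the uniform bound $\sup_N \E_{\nuu_s}\bigl[e^{-p R_{s,N}(u)}\bigr] < \infty$ for every $p \geq 1$. Representing a $\nuu_s$-sample $Y$ as the terminal value of a drift-free diffusion driven by a cylindrical Brownian motion in the Cameron--Martin space $\mathcal{H}$ of $\nuu_s$, the Bou\'e--Dupuis variational formula gives
\begin{align*}
-\log \E_{\nuu_s}\bigl[e^{-p R_{s,N}(u)}\bigr]
= \inf_{\dr} \E\!\left[p R_{s,N}(Y + \Dr) + \tfrac{1}{2}\int_0^1 \|\dr(t)\|_{\mathcal H}^2\,dt\right],
\end{align*}
where the infimum runs over admissible drifts and $\Dr$ denotes the associated drift path. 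Expanding $R_{s,N}(Y+\Dr)$ produces purely stochastic contributions, controlled as above, and mixed contributions in $\Dr$. The dangerous ones, such as $\int :\!(D^s Y_N)^2\!:\Dr_N^2$ and $\int D^s Y_N D^s\Dr_N \cdot Y_N\Dr_N$, are estimated by duality~\eqref{dual} and the product estimate~\eqref{prod2}, and then absorbed via Young's inequality into the coercive quantities $\frac{1}{2}\|\dr\|_{\mathcal H}^2$ (which controls $\|\Dr\|_{H^{s+1}}^2$) and $\frac{1}{4}\int \Dr_N^4$ (isolated from $\frac14\int (Y+\Dr)_N^4$), at the cost of random constants with all finite moments. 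This produces the required uniform-in-$N$ lower bound on the infimum.

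With uniform $L^p$ bounds on $e^{-R_{s,N}}$ and $L^p$-convergence $R_{s,N}\to R_s$, Vitali's theorem gives $L^1(\nuu_s)$-convergence of the densities, which directly yields both the strong convergence $\rhoo_{s,N}(A)\to\rhoo_s(A)$ and, by uniform integrability, the $L^p$-bound for the limit density $e^{-R_s}$. Strict positivity of $e^{-R_s}$ together with its $L^p$-integrability for all finite $p$ gives mutual equivalence of $\rhoo_s$ and $\nuu_s$. The hard part is the variational step: since Wick renormalization makes $R_{s,N}$ unbounded below, one must very carefully balance the singular interaction of $:\!(D^s Y_N)^2\!:$ with the drift $\Dr_N$ against the positive quartic term. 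This is exactly where Nelson's construction fails in three dimensions (due to $\sigma_N \sim N$ in~\eqref{sigma}), and it explains why $\tfrac{1}{4}\int u_N^4$ had to be absorbed into $R_{s,N}$ via the redefinition of the renormalized energy in~\eqref{Es}.
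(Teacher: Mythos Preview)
Your approach matches the paper's almost exactly: reduce to the marginal in $u$, establish $L^p$-convergence of $R_{s,N}$ via regularity of the Wick square (Lemma~\ref{LEM:QFT2} and Proposition~\ref{PROP:random_distr}), then prove uniform exponential integrability through a variational representation of the partition function, expanding $R_{s,N}(Y+\Dr)$ and absorbing mixed terms into positive ones by Young's inequality.  Two comments are in order.

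First, a cosmetic difference: the paper does not invoke the full Bou\'e--Dupuis formula but instead a weaker variational identity obtained directly via Girsanov's theorem and the martingale representation theorem (Proposition~\ref{PROP:var}), where the infimum is over drifts in $\Hc$ and the expectation is taken under the shifted measure $\Q^\dr$.  Either formulation works here.

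Second, and more substantively, your list of coercive terms is incomplete.  In addition to $\tfrac12\int_0^1\|\dr_t\|_{L^2}^2\,dt$ and $\tfrac14\int\Dr_N^4$, the expansion of $\tfrac32\int Q_{s,N}(Y+\Dr)(Y+\Dr)^2$ also produces the positive term $\tfrac32\int(D^s\Dr_N)^2\Dr_N^2 = \tfrac32\|D^s\Dr_N\,\Dr_N\|_{L^2}^2$, and this term is \emph{essential}.  The cross terms $\int D^sY_N\,D^s\Dr_N\,\Dr_N^2$ and $\int(D^s\Dr_N)^2\,Y_N\,\Dr_N$ are cubic in $\Dr$ and carry $2s$ derivatives on $\Dr$; a direct homogeneity count shows they cannot be absorbed into $\|\Dr\|_{H^{s+1}}^2 + \|\Dr\|_{L^4}^4$ alone via Young's inequality (the required exponent condition becomes borderline or fails).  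The paper handles these terms in Lemmas~\ref{LEM:lin} and~\ref{LEM:quadV} precisely by invoking the additional coercive quantity $\|D^s\Dr\,\Dr\|_{L^2}^2$.  With this third term included, your sketch closes.
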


Next, we state the key renormalized energy estimate, whose proof is deferred to 
the end of Section~\ref{SEC:ENERGY}.
Recall that 
 $B_R$  denotes the ball of radius $R>0$ in $\H^\s(\T^3)$
centered at the origin. 
We denote by $\Phi_N(t)$ the flow of  the truncated NLW dynamics \eqref{NLW-sys_CP}. 

\begin{proposition}[Renormalized energy estimate]\label{PROP:Nenergy}
Let $s\geq 4$ be an even integer.
Then,   given $R > 0$,  there is a constant $C = C(R)>0$ such that 
\begin{align}
\Bigg\{\int
\ind_{B_R}(\vec{u})\cdot 
\Big|\dt E_{s, N}(\pi_N\Phi_N(t)(\vec{u}))\vert_{t=0} \Big|^p d\nuu_s(\vec{u})\Bigg\}^{\frac{1}{p}}\leq Cp
\label{XX1}
\end{align}

\noi
for any finite $p\geq 1$ and any $N\in \N$.
\end{proposition}


Before we state  the main proposition
on  the evolution of the truncated measures~$\rhoo_{s, N}$, 
let us state  the following change-of-variable formula.
Given  $N \in \N$,  let 
$\EE_N = \pi_NL^2(\T^3)$
and  we endow  $\EE_N\times \EE_N$ with the Lebesgue measure $L_N$ 
as in Section~\ref{SEC:TOOL}.
Then, by viewing  the Gaussian measure  $\nuu_s$ as a product measure on $(\EE_N\times \EE_N)
\times (\EE_N\times \EE_N)^\perp$, 
we can write the truncated weighted Gaussian measure 
$\rhoo_{s, N}$ defined in \eqref{K0} as
\begin{align}
\begin{split}
d\rhoo_{s, N}(\u )
&  =  \ZZ_{s,N}^{-1} \,
e^{-  R_{s, N}(\pi_{N}u)} d \nuu_s(\u) ,
\\
& =  \hat{Z}_{s,N}^{-1} \,
e^{-  E_{s, N}(\pi_{N}\u)}\,  dL_N\otimes d  \nuu^\perp_{s; N} (\u),
\end{split}
\label{XY1}
\end{align}

\noi
where $ \hat{Z}_{s,N}$ denotes the normalization constant
and
$\nuu^\perp_{s;N}$ denotes the marginal Gaussian  measure of $\nuu_s$
on $ (\EE_N\times \EE_N)^\perp$.
 Then, we have the following change-of-variable formula.

\begin{lemma}\label{LEM:cov}
Let $s>\frac 32$ and $N \in \N$.
Then, we have 
$$
\rhoo_{s,N}(\Phi_N(t)(A))=\hat{Z}_{s,N}^{-1} 
\int_A
e^{-  E_{s, N}(\pi_{N}\Phi_N(t)(\u))}
 \, dL_N\otimes d \nuu^\perp_{s;N} (\u)
$$

\noi
for any $t \in \R$ and any measurable set
 $A\subset \H^\sigma(\T^3)$ with $\s< s - \frac 12$.
\end{lemma}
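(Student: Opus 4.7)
The plan is to reduce the lemma to \emph{invariance} of the product measure $L_N \otimes \nuu^\perp_{s;N}$ under the truncated flow $\Phi_N(t)$. Starting from the density representation on the second line of \eqref{XY1}, invariance of this base measure combined with bijectivity of $\Phi_N(t)$ (Lemma \ref{LEM:CP_GWP}) allows one to write
\begin{equation*}
\rhoo_{s,N}(\Phi_N(t)(A))
= \int \ind_{\Phi_N(t)(A)}(\Phi_N(t)(\u))\, \hat{Z}_{s,N}^{-1} e^{-E_{s,N}(\pi_N \Phi_N(t)(\u))}\, d(L_N\otimes \nuu^\perp_{s;N})(\u),
\end{equation*}
and since $\ind_{\Phi_N(t)(A)}(\Phi_N(t)(\u)) = \ind_A(\u)$, this is precisely the claimed identity.

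To establish the invariance of $L_N \otimes \nuu^\perp_{s;N}$, the key observation is that the truncated dynamics \eqref{NLW-sys_CP} \emph{decouples} along the orthogonal splitting $\H^\sigma(\T^3) = (\EE_N\times\EE_N) \oplus (\EE_N\times\EE_N)^\perp$. Since $\pi_N((\pi_N u)^3)$ is supported on frequencies $|n|\le N$, writing $u = \pi_N u + (I-\pi_N)u$ and similarly for $v$ shows that the low-frequency part $\pi_N\u$ satisfies a finite-dimensional Hamiltonian ODE on $\EE_N\times\EE_N$ with Hamiltonian $E_N$ depending only on $\pi_N\u$, while the high-frequency part $(I-\pi_N)\u$ decouples entirely from the low part and evolves via the free linear wave equation on $(\EE_N\times\EE_N)^\perp$. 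Invariance of the Lebesgue measure $L_N$ under the low-frequency flow is then immediate from Liouville's theorem applied to this Hamiltonian ODE.

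For the high-frequency factor, the Karhunen-Lo\`eve expansion \eqref{series4} together with the identity $|n|^2 + |n|^{2s+2} = |n|^2(1+|n|^{2s})$ shows that the density of $\nuu^\perp_{s;N}$ factors over the modes $|n|>N$ as
\begin{equation*}
\propto \prod_{|n|>N}\exp\!\Bigl(-\tfrac12(1+|n|^{2s})\bigl(|n|^2|\ft u(n)|^2 + |\ft v(n)|^2\bigr)\Bigr).
\end{equation*}
Each factor is a function of the harmonic-oscillator energy $\tfrac12(|n|^2|\ft u(n)|^2 + |\ft v(n)|^2)$ at frequency $|n|$, which is precisely the conserved Hamiltonian for the linear wave dynamics restricted to mode $n$. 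Since this flow acts as a symplectic rotation in the $(\ft u(n),\ft v(n))$-plane, it preserves both the energy and the Lebesgue element on each mode, so $\nuu^\perp_{s;N}$ is invariant. Combined with the low-frequency invariance, this gives invariance of $L_N \otimes \nuu^\perp_{s;N}$ and closes the argument. The only point demanding a little care is the harmonic-oscillator ratio of coefficients in the Gaussian density, which hinges on the specific choice of weights in \eqref{series4}; the rest is essentially bookkeeping combined with the finite-dimensional Liouville theorem, which is why this lemma is stated at the level of the truncation where everything is rigorous.
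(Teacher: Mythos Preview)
Your proof is correct and follows essentially the same route as the paper's (omitted) argument: decouple the truncated dynamics into the low-frequency Hamiltonian ODE on $\EE_N\times\EE_N$ and the high-frequency linear wave flow, then use Liouville's theorem for the former and invariance of the Gaussian marginal for the latter, together with bijectivity of $\Phi_N(t)$. Your observation that $|n|^2+|n|^{2s+2}=|n|^2(1+|n|^{2s})$ makes the high-frequency density a function of the mode-wise wave energy is exactly the point that makes $\nuu^\perp_{s;N}$ invariant under the linear flow, and this is the only nontrivial step beyond standard finite-dimensional Liouville.
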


The proof of Lemma~\ref{LEM:cov} is 
based on (i) the invariance of the Lebesgue measure $L_N$
under (the low frequency part of) the truncated NLW dynamics $\pi_N \Phi_N(t)$, 
(ii)  the conservation of the truncated energy $E_N(\u)$ under $\Phi_N(t)$
and (iii)  the bijectivity of the solution map $\Phi_N(t)$.
As it follows from similar considerations presented in~\cite{Tzvet, OTz},  
 we omit details of the proof.

We now state and prove the main proposition,
essentially establishing the differential inequality \eqref{measure_diffeqn}.
This proposition allows us to control the growth of the pushforward 
measure  $\rhoo_{s, N}(\Phi_N(t) (A))$
of a given measurable set $A  \subset \H^\sigma(\T^3)$
uniformly in $N \in \N$, 
provided that the set $A$ lies in the ball
$ B_R \subset \H^\sigma(\T^3)$ of radius $R>0$.
Namely, it only provides a  {\it set-dependent}
 control.
This dependence on $R>0$, however, does not cause any trouble
in establishing quasi-invariance of 
 the Gaussian measure $\nuu_s$ (and hence of $\muu_s$).

\begin{proposition}\label{PROP:meas2}
	Let $s \geq 4$ be an even integer
	and  $\s\in \big(1,s-\frac 12\big)$.
	Then, given  $R > 0$ and $T > 0$, 
	there exists $C_{R, T}>0$ such that 
	\begin{align*}
	\frac{d}{dt} \rhoo_{s, N}(\Phi_N(t) (A))
	\leq C_{R, T}\cdot  p \, \big\{ \rhoo_{s, N} (\Phi_N(t)(A))\big\}^{1-\frac 1p}
	\end{align*}
	\noi
	for any $p \geq 2$, 
	any $N \in \N$, any $t \in [0, T]$, 
	and  any measurable set 
	$A \subset B_R \subset \H^\sigma(\T^3)$.
\end{proposition}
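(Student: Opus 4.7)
The plan is to chain together Lemma~\ref{LEM:cov} (change of variable), Lemma~\ref{LEM:CP_APPRX}\,(i) (deterministic growth bound), and the two key inputs Proposition~\ref{PROP:QFT0} (uniform integrability of the density of $\rhoo_{s,N}$ with respect to $\nuu_s$) and Proposition~\ref{PROP:Nenergy} (renormalized energy estimate). Starting from Lemma~\ref{LEM:cov} and differentiating under the integral at $t=t_0\in[0,T]$, I obtain
$$\frac{d}{dt}\rhoo_{s,N}(\Phi_N(t)(A))\bigg|_{t=t_0}=-\hat Z_{s,N}^{-1}\int_A \dt E_{s,N}(\pi_N\Phi_N(t)(\vec u))\vert_{t=t_0}\,e^{-E_{s,N}(\pi_N\Phi_N(t_0)(\vec u))}\,dL_N\otimes d\nuu^\perp_{s;N}(\vec u).$$
Using the semigroup property $\Phi_N(t_0+h)=\Phi_N(h)\circ\Phi_N(t_0)$, the time derivative inside rewrites as $\dt E_{s,N}(\pi_N\Phi_N(h)(\vec w))\vert_{h=0}$ with $\vec w=\Phi_N(t_0)(\vec u)$. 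Changing variables $\vec u\mapsto \vec w$ and invoking the invariance of the product measure $L_N\otimes\nuu^\perp_{s;N}$ under $\Phi_N(t_0)$---Liouville's theorem for the truncated Hamiltonian flow on $\EE_N\times\EE_N$, together with invariance of the high-frequency Gaussian marginal under the linear wave flow on $(\EE_N\times\EE_N)^\perp$, exactly the ingredients underlying Lemma~\ref{LEM:cov}---I arrive at
$$\frac{d}{dt}\rhoo_{s,N}(\Phi_N(t)(A))\bigg|_{t=t_0}=-\int_{\Phi_N(t_0)(A)}\dt E_{s,N}(\pi_N\Phi_N(t)(\vec w))\vert_{t=0}\,d\rhoo_{s,N}(\vec w).$$

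Next, I apply H\"older's inequality in $d\rhoo_{s,N}$ with conjugate exponents $p$ and $p/(p-1)$; the second factor is already $\rhoo_{s,N}(\Phi_N(t_0)(A))^{1-1/p}$. Since $A\subset B_R$, Lemma~\ref{LEM:CP_APPRX}\,(i) yields $\Phi_N(t_0)(A)\subset B_{C(R,T)}$ uniformly in $N\in\N$ and in $t_0\in[0,T]$, so I may dominate the indicator of $\Phi_N(t_0)(A)$ by $\ind_{B_{C(R,T)}}$. Writing $d\rhoo_{s,N}=\ZZ_{s,N}^{-1}e^{-R_{s,N}(u)}\,d\nuu_s$, Cauchy--Schwarz in $d\nuu_s$ together with the uniform $L^2(\nuu_s)$-bound on $\{e^{-R_{s,N}}\}$ and the uniform positive lower bound on $\ZZ_{s,N}$ (both supplied by Proposition~\ref{PROP:QFT0}) gives
$$\bigg(\int\ind_{B_{C(R,T)}}\Big|\dt E_{s,N}(\pi_N\Phi_N(t)(\vec w))\vert_{t=0}\Big|^{p}\,d\rhoo_{s,N}\bigg)^{\!\!1/p}\le C\bigg(\int\ind_{B_{C(R,T)}}\Big|\dt E_{s,N}(\pi_N\Phi_N(t)(\vec w))\vert_{t=0}\Big|^{2p}\,d\nuu_s\bigg)^{\!\!1/(2p)}$$
with $C$ independent of $N$ and $p$.

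Finally, Proposition~\ref{PROP:Nenergy}, applied on the enlarged ball $B_{C(R,T)}$ with exponent $2p$ in place of $p$, bounds the last expression by $C_{R,T}\cdot(2p)\le 2C_{R,T}\,p$, and combining the previous displays produces the asserted differential inequality. The argument is, in effect, bookkeeping once Propositions~\ref{PROP:QFT0} and~\ref{PROP:Nenergy} are in hand. I expect the most delicate step to be ensuring that the reduction to $t=0$ produces an integrand against $\rhoo_{s,N}$ itself (so that the density bound of Proposition~\ref{PROP:QFT0} can be used to transfer to $\nuu_s$); this requires the full $\Phi_N(t_0)$-invariance of $L_N\otimes\nuu^\perp_{s;N}$, not merely the bijectivity of $\Phi_N$. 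A more conceptual point is that access to the $\H^\s$-ball via the deterministic growth bound---rather than only the $\H^1$-ball as in~\cite{OTz3}---is precisely what allows the softer, purely $L^p$-type form of the energy estimate in Proposition~\ref{PROP:Nenergy}, and hence the significantly simpler treatment of the three-dimensional case.
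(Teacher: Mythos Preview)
Your proof is correct and follows essentially the same approach as the paper's own proof: reduce to $t=0$ via the flow property and the change-of-variable Lemma~\ref{LEM:cov}, enlarge the domain of integration to $B_{C(R,T)}$ using the deterministic growth bound, apply H\"older in $d\rhoo_{s,N}$, and then pass from $\rhoo_{s,N}$ to $\nuu_s$ by Cauchy--Schwarz using the uniform $L^2(\nuu_s)$-bound on $e^{-R_{s,N}}$ and the uniform lower bound on $\ZZ_{s,N}$, before invoking Proposition~\ref{PROP:Nenergy} at exponent $2p$. The only cosmetic difference is that the paper first rewrites $\rhoo_{s,N}(\Phi_N(t)(A))$ via \eqref{K0} and the flow property and then applies Lemma~\ref{LEM:cov}, whereas you start directly from Lemma~\ref{LEM:cov} and undo the change of variables; these are the same computation.
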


In \cite{OTz3}, there is an analogous statement, 
controlling the evolution of the truncated measures
(without the restriction on $B_R$);
see \cite[Lemma 5.2]{OTz3}.
The main idea of the proof of Lemma 5.2 in \cite{OTz3}
is to reduce
the analysis to that at $t = 0$, which provides access
to the random distributions in \eqref{series4}.
On the other hand, 
the main idea in \cite{PTV} at this step is to use the 
{\it deterministic}  control \eqref{growth1} on the growth of solutions.
In the following, 
 we combine both of these ideas, 
 thus introducing a hybrid argument
 which works more effectively than each of the two methods.

\begin{proof}
	Fix $R, T > 0$ and $t_0\in[0, T]$. Let $A \subset B_R$
	be a measurable set in $\H^\s(\T^3)$.
Using the flow property of $\Phi_N(t)$, we have
\begin{align*}
\frac{d}{dt} \rhoo_{s, N}(\Phi_N(t) (A))\bigg|_{t=t_0}
& =
 \ZZ_{s,N}^{-1} 
\frac{d}{dt}
\int_{\Phi_N(t)(A)}e^{-  R_{s, N}(\pi_{N}u)} d \nuu_s(\vec u)\bigg|_{t=t_0}
\\
& =
\ZZ_{s,N}^{-1} 
\frac{d}{dt}
\int_{\Phi_N(t)(\Phi_{N}(t_0)(A))}e^{-  R_{s, N}(\pi_{N}u)} d \nuu_s(\vec u)\bigg|_{t=0}.
\end{align*}

\noi
The change-of-variable argument (Lemma \ref{LEM:cov}),
\eqref{XY1},  
and the growth bound \eqref{growth1}  in Lemma \ref{LEM:CP_APPRX}  yield
\begin{align} \label{eq growth}
\begin{split}
\frac{d}{dt} & \rhoo_{s, N}  (\Phi_N  (t) (A))\bigg|_{t=t_0}
\\
& =
\hat{Z}_{s,N}^{-1} 
\frac{d}{dt}
\int_{\Phi_{N}(t_0)(A)}
e^{-  E_{s, N}(\pi_{N}\Phi_N(t)(u,v))}
  dL_N\otimes  d \nuu^\perp_{s;N}
\bigg|_{t=0}
\\
& =- \ZZ_{s,N}^{-1} 
\int_{\Phi_{N}(t_0)(A)}
\dt E_{s, N}(\pi_N\Phi_N(t)(\u)) \big|_{t=0}\,
e^{-  R_{s, N}(\pi_{N}u)} d \nuu_s(\u) \\
& \leq \ZZ_{s,N}^{-1} 
\int_{B_{C(R, T)}}
\Big|\dt E_{s, N}(\pi_N\Phi_N(t)(\u)) \big|_{t=0}\Big|\,
e^{-  R_{s, N}(\pi_{N}u)} d \nuu_s(\u) .
\end{split}
\end{align}

\noi
Then, from  H\"older's inequality, we obtain
\begin{align*}
\frac{d}{dt} \rhoo_{s, N}(\Phi_N(t) (A))\bigg|_{t=t_0}
& \leq 
\Big\| \ind_{B_{C(R, T)}}(\u) \cdot 
\dt E_{s, N}(\pi_N\Phi_N(t)(\u))\big|_{t=0}
\Big\|_{L^p(\rhoo_{s, N})} \\
& \hphantom{X}
\times
\big\{\rhoo_{s, N}(\Phi_N(t_0) (A))\big\}^{1-\frac{1}{p}}.
\end{align*}

\noi
Finally, by Cauchy-Schwarz inequality
 together with the uniform exponential moment
 bound on $R_{s, N}(u)$  in Proposition~\ref{PROP:QFT0}
 and Proposition \ref{PROP:Nenergy}, we obtain 
 \begin{align}
\begin{split}
\Big\| & \ind_{B_{C(R, T)}}  (u, v) \cdot  \dt E_{s, N}
 (\pi_N\Phi_N(t)(\u))\big|_{t=0}
\Big\|_{L^p(\rhoo_{s, N})} \\
& \leq \ZZ_{s,N}^{-\frac{1}{p}}\,
\Big\|
\ind_{B_{C(R, T)}}(\u) \cdot 
\dt E_{s, N}(\pi_N\Phi_N(t)(\u))\big|_{t=0}
\Big\|_{L^{2p}(\nuu_{s, N})}
 \Big\| e^{-  R_{s, N}(u)} \Big\|_{L^{2}(\nuu_s)}^\frac{1}{p}\\
&  \leq C_{R, T} \cdot p.
\end{split}
\label{N1}
 \end{align}
 
 \noi
Here, we used the boundedness of 
$ \ZZ_{s,N}^{-1}$, uniformly in $N \in \N$
(recall that $ \ZZ_{s,N}\to  \ZZ_{s}>0$ as $N \to \infty$).
This completes the proof of Proposition~\ref{PROP:meas2}.
\end{proof}

\subsection{Proof of Theorem \ref{THM:NLW}}
\label{SUBSEC:proof2}

We conclude this section by presenting the proof of Theorem~\ref{THM:NLW}.
Our aim is to show that
for each {\it fixed} $R>0$, we have
\begin{align}
\nuu_{s}(A) = 0
\qquad 
\text{implies}
\qquad \nuu_{s}\big(\Phi(t) (A)\big) =0
\label{N2}
\end{align}

\noi
for any measurable set $A \subset B_R \subset \H^\s(\T^3)$, $\sigma \in (1, s - \frac 12)$
and any $t >0$.\footnote{In view of  the time reversibility of the equation \eqref{NLW}, 
	it suffices to consider positive times.}
Since the choice of $R>0$ is arbitrary, 
this yields quasi-invariance of $\nuu_s$ 
under the NLW dynamics.
Then, we invoke Lemma \ref{LEM:equiv}
to conclude quasi-invariance of $\muu_s$ (Theorem \ref{THM:NLW}).

Arguing as in \cite{OTz3}, 
Proposition~\ref{PROP:meas2}
allows us to establish 
quasi-invariance of the truncated weighted Gaussian measures $\rhoo_{s, N}$
with the uniform control in $N\in \N$
(but with  dependence on $R>0$). 
See Proposition 5.3  in \cite{OTz3}.
By the approximation property of the truncated NLW dynamics (Lemma \ref{LEM:CP_APPRX}\,(ii))
and the strong convergence of $\rhoo_{s, N}$ to $\rhoo_{s}$
(Proposition~\ref{PROP:QFT0}),
we can upgrade this to the $N = \infty$ case, 
thus establishing quasi-invariance of 
the untruncated 
weighted Gaussian measure $\rhoo_{s}$ 
under the NLW dynamics.
See  Lemma 5.5 in \cite{OTz3} for the proof.

\begin{lemma}\label{LEM:meas4}
Given  any $R > 0$,  there exists $t_* = t_*(R) \in (0, 1]$ such that 
for any $\eps > 0$,  
there exists $\dl > 0$ 
with the following property;
	if a measurable set $A \subset B_R \subset \H^\s(\T^3)$, $\s\in \big(1,s- \frac 12\big)$ satisfies
	$$
	\rhoo_{s} ( A)< \dl,  
	$$

\noi
then we have 
	$$
	\rhoo_{s} (\Phi(t) (A)) < \eps
	$$
	for any $t \in [0, t_*]$.
\end{lemma}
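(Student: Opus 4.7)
The plan is to prove the analogous statement for the truncated dynamics $\Phi_N$ first, uniformly in $N \in \N$, and then pass to the limit $N \to \infty$ via Proposition \ref{PROP:QFT0} and Lemma \ref{LEM:CP_APPRX}\,(ii).

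For $A \subset B_R \subset \H^\sigma(\T^3)$, set $f_N(t) = \rhoo_{s,N}(\Phi_N(t)(A))$. Proposition \ref{PROP:meas2} with $T = 1$ gives $f_N'(t) \leq C_R \, p \, f_N(t)^{1-1/p}$ for every $p \geq 2$ and $t \in [0,1]$, with $C_R$ independent of $N$. Rewriting this as $\frac{d}{dt}\bigl(p f_N(t)^{1/p}\bigr) \leq C_R p$ and integrating yields the Yudovich-type bound
\begin{align*}
f_N(t) \leq \bigl(f_N(0)^{1/p} + C_R t\bigr)^p, \qquad t \in [0,1],\ p \geq 2.
\end{align*}
Fix $t_*(R) \in (0,1]$ with $C_R t_* \leq \tfrac14$. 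Given $\eps > 0$, choose $p = p(\eps) \geq 2$ large enough that $2^{-p} < \eps/2$, and then $\dl_0 = \dl_0(\eps, R) > 0$ small enough that $\dl_0^{1/p} \leq \tfrac14$. With these choices, $\rhoo_{s,N}(A) < \dl_0$ implies $\rhoo_{s,N}(\Phi_N(t)(A)) < \eps/2$ for all $N \in \N$ and $t \in [0, t_*]$.

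To upgrade to $\rhoo_s$, recall from Proposition \ref{PROP:QFT0} that $e^{-R_{s,N}(u)} \to e^{-R_s(u)}$ in $L^1(\nuu_s)$, so $\rhoo_{s,N} \to \rhoo_s$ in total variation. I would apply the truncated bound at the slightly enlarged radius $R+1$, with associated quantities $t_*(R+1)$ and $\dl_0' := \dl_0(\eps/2, R+1)$, in order to absorb a small $\H^\sigma$-fattening of $A$. Given $A \subset B_R$ with $\rhoo_s(A) < \dl$ for $\dl < \dl_0'/4$, use inner regularity of $\rhoo_s$ on the Borel set $\Phi(t)(A)$ to pick a compact $L \subset \Phi(t)(A)$ with $\rhoo_s(\Phi(t)(A) \setminus L) < \eps/4$; then $K := \Phi(t)^{-1}(L)$ is a compact subset of $A$, since $\Phi(t)$ is a homeomorphism on $\H^\sigma$ (Lemma \ref{LEM:CP_GWP}). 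Lemma \ref{LEM:CP_APPRX}\,(ii) applied to $K$ yields $\Phi(t)(K) \subset \Phi_N(t)(K + B_\eta)$ for $N \geq N_0(\eta)$ and $t \in [0, t_*]$, where $B_\eta$ is the $\H^\sigma$-ball of radius $\eta$ about the origin. Choosing $\eta$ small ensures $K + B_\eta \subset B_{R+1}$ and, by continuity of measure from above, $\rhoo_s(K + B_\eta) < \rhoo_s(K) + \dl_0'/4 \leq \rhoo_s(A) + \dl_0'/4 < \dl_0'/2$. Total variation convergence then yields $\rhoo_{s,N}(K + B_\eta) < \dl_0'$ for all $N$ sufficiently large, so the uniform truncated bound gives $\rhoo_{s,N}(\Phi_N(t)(K + B_\eta)) < \eps/4$. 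Assembling the pieces,
\begin{align*}
\rhoo_s(\Phi(t)(A))
&\leq \rhoo_s(L) + \tfrac{\eps}{4}
\leq \rhoo_s\bigl(\Phi_N(t)(K + B_\eta)\bigr) + \tfrac{\eps}{4} \\
&\leq \rhoo_{s,N}\bigl(\Phi_N(t)(K + B_\eta)\bigr) + \|\rhoo_s - \rhoo_{s,N}\|_{\mathrm{TV}} + \tfrac{\eps}{4},
\end{align*}
and letting $N \to \infty$ delivers $\rhoo_s(\Phi(t)(A)) < \eps$.

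The main obstacle is organizational rather than analytic: the approximation in Lemma \ref{LEM:CP_APPRX}\,(ii) is only available on compact subsets of $\H^\sigma$, so one must weave together inner regularity of $\rhoo_s$, continuity of measure under $\eta$-fattenings, and total variation convergence $\rhoo_{s,N} \to \rhoo_s$ in a careful chain of inclusions, while being mindful that the growth bound forces one to work at the enlarged radius $R+1$. All the substantive analytic input has been absorbed into Propositions \ref{PROP:meas2} and \ref{PROP:QFT0}, so that this lemma is essentially a soft consequence.
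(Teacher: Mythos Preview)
Your proposal is correct and follows essentially the same approach as the paper, which does not give a self-contained proof but defers to Lemma~5.5 in \cite{OTz3}: first integrate the differential inequality of Proposition~\ref{PROP:meas2} via the Yudovich trick to obtain the uniform-in-$N$ truncated statement, then pass to the limit using the approximation property (Lemma~\ref{LEM:CP_APPRX}\,(ii)) together with the strong convergence $\rhoo_{s,N}\to\rhoo_s$ from Proposition~\ref{PROP:QFT0}. The only remark is that your $\eps$-bookkeeping is slightly inconsistent (you define $\dl_0' = \dl_0(\eps/2, R+1)$ but then use it to conclude a bound by $\eps/4$), and in the final step $t_*$ should be taken as $t_*(R+1)$; both are cosmetic and easily fixed.
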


Finally,  we establish \eqref{N2}
by exploiting the mutual absolute continuity 
between $\rhoo_{s}$ and $\nuu_{s}$
for each fixed  $R>0$. Let $A \subset B_R$ be such that $\nuu_s(A) = 0$. 
By the mutual absolute continuity 
of $\nuu_{s}$ and $\rhoo_{s}$, 
we have
\[\rhoo_{s}(A) = 0.\]

\noi
Now, fix a target time $T > 0$
and let $C(R, T)$ be as in Lemma \ref{LEM:CP_APPRX} (i).
Namely, we have 
\begin{align}
\Phi(t)(A) \subset B_{C(R, T)}
\label{meas10}
\end{align}

\noi
for all $t \in [0, T]$.
Then, by applying Lemma \ref{LEM:meas4}
with $R$ replaced by $C(R, T)$, we obtain
\begin{align}
\rhoo_{s}(\Phi(t) (A)) = 0
\label{meas9}
\end{align}

\noi
for $t \in [0, t_*]$, 
where $t_* = t_*(C(R, T))$.
In view of \eqref{meas10}, 
we can iterate this argument
and 
 conclude that \eqref{meas9} holds for any $t \in [0, T] $.
Since the choice of $T>0$ was arbitrary, 
we obtain \eqref{meas9} for any $t>0$.
Finally, by invoking the mutual absolute continuity 
of $\nuu_{s}$ and $\rhoo_{s}$ once again, 
we have
\[\nuu_{s}(\Phi(t) (A)) = 0\]

\noi
for any $t > 0$. This proves \eqref{N2}
and hence Theorem \ref{THM:NLW}.

\begin{remark}\label{REM:loss}\rm
  While this new hybrid argument allows us to establish quasi-invariance of the Gaussian measure $\nuu_s$
  (and hence $\muu_s$)
  under the NLW dynamics even in the three-dimensional case,
it does not provide as good of a quantitative bound as
the two-dimensional argument.
For example, in the two-dimensional case,
the argument in \cite{OTz3} yielded
\begin{align}
\rhoo_{s} (\Phi(t) (A)) \les
\big(\rhoo_{s} ( A) \big)^{\frac {1}{c^{1+|t|}}}
\label{Nbound2}
\end{align}
for a weighted Gaussian measure $\rhoo_{s, r}$
with an energy cutoff $\ind_{\{E(u, v) \leq r\}}$,
where $c = c(r) >0$; see Remark 5.6 in \cite{OTz3}.
Our present understanding does not provide an 
analogous bound to \eqref{Nbound2} in three dimensions.

\end{remark}

\section{Construction of the weighted Gaussian measure}\label{SEC:QFT}

In this section, we prove Proposition \ref{PROP:QFT0}
by establishing  
 uniform integrability of the densities $R_{s, N}(u)$
of  the  weighted Gaussian measures 
$\rhoo_{s, N}$ in \eqref{K0}. 
In Subsection~\ref{SUBSEC:QFT1}, 
we first prove some regularity properties
of random distributions (Proposition~\ref{PROP:random_distr})
and then 
 the $L^p$-convergence of $R_{s, N}(u)$ in \eqref{K1}.
We split the proof of the main result (Proposition~\ref{PROP:QFT})
into two parts.
In Subsection~\ref{SUBSEC:var}, 
we follow the argument by 
Barashkov and Gubinelli~\cite{BG}
and
express the partition function $\ZZ_{s, N}$
in terms of a minimization problem
 involving a stochastic control problem
 (Proposition \ref{PROP:var}).
 In Subsection~\ref{SUBSEC:const2}, 
 we then study the minimization problem
 and 
establish boundedness of the 
partition function $\ZZ_{s, N}$,  uniformly in $N \in \N$.

Let $N \geq 1$. Recall that $\rhoo_{s, N}$ has density $e^{-  R_{s, N}(u)}$ with respect to $\nuu_s$.
In particular, note that  the non-Gaussian part of $\rhoo_{s,N}$ depends only on $u$. This motivates the following reduction; define $H^{(1)}_s(u)$ and $H^{(2)}_s(v)$ by  
\begin{align*}
H^{(1)}_s(u)
& =   \frac 12 \bigg(\int_{\T^3}  u\bigg)^2 + \frac 12 \int_{\T^3}  |\nb u|^2 + \frac 12 \int_{\T^3} (D^{s+1} u)^2,
\\
H^{(2)}_s(v)
& =   \frac 12 \int_{\T^3} v^2+\frac 12  \int_{\T^3} (D^s v)^2 .
\end{align*}

\noi
Then, define Gaussian measures
$\nu_s^{(j)}$, $j = 1, 2$, 
with formal densities:
\begin{align*}
d \nu_s^{(1)} = Z_{1, s}^{-1} e^{-H_s^{(1)}(u)} du
\qquad \text{and}\qquad
d \nu_s^{(2)} = Z_{2, s}^{-1} e^{-H_s^{(2)}(v)} dv.
\end{align*}

\noi
Since $H_s(\u) = H_s(u, v)$ in \eqref{lili} is now written as 
\[ H_s(\u) = H^{(1)}_s(u)+H^{(2)}_s(v),\]

\noi
the Gaussian measure $\nuu_s$ 
can be rewritten as 
\begin{align}
 d \nuu_s(\vec{u}) = d\nu_s^{(1)}(u) \otimes d\nu_{s}^{(2)}(v).
 \label{nu3}
\end{align}

\noi
From decomposition~\eqref{nu3}, we have
\begin{align*}
d\rhoo_{s, N}(\vec{u}) 
& = d\rho_{s, N} (u) \otimes d\nu_{s}^{(2)}(v),
\end{align*}

\noi
where $\rho_{s, N}$ is given by 
\begin{equation*}
d\rho_{s, N} (u)=  \ZZ_{s, N}^{-1} e^{-R_{s, N}(u)} d\nu^{(1)}_{s}(u).
\end{equation*}

\noi
The partition function  $\ZZ_{s, N}$ is now expressed as 
\begin{equation}
\ZZ_{s, N}=  \int e^{-R_{s, N}(u)} d\nu_{s}^{(1)}(u).
\label{Z1}
\end{equation}

In the following, 
we denote  $\nu_{s}^{(1)}$ by $\nu_s$
and  prove various statements in terms of $\nu_s$
but they can be trivially upgraded to the corresponding statement for $\nuu_s$.

\begin{lemma}\label{LEM:QFT2}
Let $ s > \frac 32$.
Then,  given any finite $p < \infty$, 
$R_{s, N} $ defined in \eqref{K1} converges to some $R_s$ in $L^p( \nu_s)$ as $N \to \infty$.

\end{lemma}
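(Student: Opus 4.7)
\medskip

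\noindent
\textbf{Proof plan.}
The plan is to decompose
\[
R_{s,N}(u) = \tfrac{3}{2}\int_{\T^3}Q_{s,N}(u_N)\, u_N^2\,dx + \tfrac{1}{4}\int_{\T^3}u_N^4\,dx
\]
and treat the two pieces separately. Fix $\sigma\in(1,s-\frac12)$, which is nonempty since $s>\frac32$. For the quartic piece, each Fourier mode of $u$ lies in $\mathcal{H}_1$ with $\E[|\widehat u(n)|^2]\les\jb{n}^{-2s-2}$ by \eqref{series4}, so Lemma~\ref{LEM:MWX}(ii) yields $u_N\to u$ in $L^p(\nu_s;\C^\sigma(\T^3))$ for every finite $p$. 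Since $\sigma>1$, the algebra property \eqref{alge} gives $u_N^4\to u^4$ in $\C^\sigma$, and the embedding $\C^\sigma\hookrightarrow L^1$ upgrades this to convergence of $\int_{\T^3} u_N^4\,dx$ in every $L^p(\nu_s)$.

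The main work is the Wick piece. The key stochastic input I would prove is that $Q_{s,N}(u_N)=(D^s u_N)^2-\sigma_N$ belongs to the second chaos $\mathcal{H}_2$ and converges to some $Q_s(u)$ in $L^p(\nu_s;\C^{-1-\eps}(\T^3))$ for any $\eps>0$. The Fourier-side bound \eqref{MWX1} comes from a direct computation: from \eqref{series4} and \eqref{sigma} the $m=-n$ diagonal cancels the subtraction of $\sigma_N$ at $k=0$, and for each $k\in\Z^3$ one gets
\[
\E\bigl[|\widehat{Q_{s,N}(u_N)}(k)|^2\bigr]
\les \sum_{m+n=k}\frac{|m|^{2s}|n|^{2s}}{(|m|^2+|m|^{2s+2})(|n|^2+|n|^{2s+2})}
\les \sum_{m+n=k}\frac{1}{\jb{m}^2\jb{n}^2}
\les \jb{k}^{-1},
\]
uniformly in $N$, with the analogous $N^{-2\theta}\jb{k}^{-1}$ bound for the increment $Q_{s,N}(u_N)-Q_{s,M}(u_M)$ obtained by splitting the double sum at the frequency cutoffs. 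Taking $s_0=-1$ in Lemma~\ref{LEM:MWX}(ii) (so that $-d-2s_0=-1$) then produces the desired $\C^{-1-\eps}$-convergence.

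To finish I would telescope
\[
Q_{s,N}(u_N) u_N^2 - Q_s(u)\,u^2 = Q_{s,N}(u_N)(u_N^2-u^2) + (Q_{s,N}(u_N)-Q_s(u))\,u^2
\]
and estimate each piece via the duality \eqref{dual} with the pair $(\C^{-1-\eps},B^{1+\eps}_{1,1})$, using \eqref{alge} and \eqref{emb_b} to dominate $\|u_N^2-u^2\|_{B^{1+\eps}_{1,1}}$ by $\|u_N-u\|_{\C^\sigma}(\|u_N\|_{\C^\sigma}+\|u\|_{\C^\sigma})$, where $\eps<\sigma-1$ is fixed. Cauchy--Schwarz in $L^{2p}(\nu_s)$ combines the two stochastic $L^{2p}$-convergences (of $u_N$ in $\C^\sigma$ and of $Q_{s,N}(u_N)$ in $\C^{-1-\eps}$) into convergence of the integrated product in $L^p(\nu_s)$, and identifies $R_s(u)=\tfrac{3}{2}\int_{\T^3}Q_s(u)u^2\,dx+\tfrac{1}{4}\int_{\T^3}u^4\,dx$. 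The main obstacle I expect is the Fourier-side variance estimate for $Q_{s,N}(u_N)$ and its increments: extracting the uniform $\jb{k}^{-1}$ decay with a quantitative $N^{-\theta}$ gain on the differences is where the Gaussian structure of $\nu_s$ and the precise choice of $\sigma_N$ enter in an essential way. Once that stochastic input is in hand, the rest of the proof reduces to a deterministic Besov duality pairing.
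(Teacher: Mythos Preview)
Your proposal is correct and follows essentially the same approach as the paper: both arguments show $u_N\to u$ in $L^p(\nu_s;\C^\sigma)$ via Lemma~\ref{LEM:MWX}, establish $Q_{s,N}(u_N)\to Q_s(u)$ in $L^p(\nu_s;\C^{-1-\eps})$ via the second-chaos variance computation (which the paper packages separately as Proposition~\ref{PROP:random_distr}), and then combine the two to conclude. The only cosmetic difference is that the paper uses the product estimate~\eqref{prod2} to obtain convergence of $Q_{s,N}(u_N)u_N^2$ in $\C^{-1-\eps}$ and then integrates, whereas you pair the integral directly via duality~\eqref{dual}; these are equivalent maneuvers here.
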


The goal of this section is to
prove the following proposition on 
 uniform  (in $N \in \N$)  integrability of the density 
$e^{-  R_{s, N}(u)}$ for $\rhoo_{s, N}$,
which allows us to   construct the limiting measure $\rhoo_{s}$.
 As a consequence of our construction, 
 the weighted Gaussian measure $\rhoo_s$ is equivalent to $\nuu_s$ (and hence to $\muu_s$
 in view of Lemma \ref{LEM:equiv}).

\begin{proposition}\label{PROP:QFT}
Let $ s > \frac 32$.
Then, given any finite $ p < \infty$, 
there exists $C_p > 0$ such that 
\begin{equation}
\sup_{N\in \N} \Big\| e^{- R_{s, N}(u)}\Big\|_{L^p(\nu_s)}
\leq C_p  < \infty.
\label{exp1}
\end{equation}

\noi
Moreover, we have
\begin{equation}\label{exp2}
\lim_{N\rightarrow\infty}e^{-  R_{s, N}(u)}=e^{-  R_s(u)}
\qquad \text{in } L^p(\nu_s).
\end{equation}

\end{proposition}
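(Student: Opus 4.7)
The plan is to prove the uniform bound \eqref{exp1} via the Bou\'e--Dupuis variational formula in the spirit of Barashkov--Gubinelli \cite{BG}, and then to deduce the $L^p$-convergence~\eqref{exp2} from~\eqref{exp1} together with Lemma~\ref{LEM:QFT2}. Since for every finite $p$ the functional $p R_{s,N}$ has the same structure as $R_{s,N}$ (only the coefficients in front of the renormalized quadratic and quartic terms are rescaled), it suffices to show that $-\log \E_{\nu_s}[e^{-c R_{s,N}(u)}]$ is bounded from below uniformly in $N$ for every $c>0$. Realizing $u\sim\nu_s$ as the terminal value $X(1)$ of a suitable Wiener process with paths in the Cameron--Martin space of $\nu_s$, the Bou\'e--Dupuis formula gives
\begin{align*}
-\log\E_{\nu_s}\bigl[e^{-c R_{s,N}(u)}\bigr]
=\inf_{f} \E\Bigl[c R_{s,N}(X(1)+\Theta(f))+\tfrac12\!\int_0^1\!\|f(t)\|_{H^{s+1}}^2\,dt\Bigr],
\end{align*}
where the infimum runs over adapted drifts $f$ and $\Theta(f)\in H^{s+1}(\T^3)$ satisfies $\|\Theta(f)\|_{H^{s+1}}^2\le \int_0^1\|f(t)\|_{H^{s+1}}^2\,dt$. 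The task is to bound the right-hand side from below uniformly in $N$.

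Writing $X=X(1)$, $\Theta=\Theta(f)$, and using the identity $Q_{s,N}(X_N+\Theta_N)=Q_{s,N}(X_N)+2(D^sX_N)(D^s\Theta_N)+(D^s\Theta_N)^2$, I expand
\begin{align*}
R_{s,N}(X+\Theta)
&=\frac{3}{2}\int_{\T^3}Q_{s,N}(X_N)(X_N+\Theta_N)^2 \\
&\quad+3\int_{\T^3}(D^sX_N)(D^s\Theta_N)(X_N+\Theta_N)^2 \\
&\quad+\frac{3}{2}\int_{\T^3}(D^s\Theta_N)^2(X_N+\Theta_N)^2+\frac{1}{4}\int_{\T^3}(X_N+\Theta_N)^4.
\end{align*}
The last two integrals are nonnegative and, together with the drift cost $\tfrac12\|\Theta\|_{H^{s+1}}^2$, form the coercive part of the action, controlling $\|\Theta\|_{H^{s+1}}$ and (via $(a+b)^4\ge\tfrac12 b^4- C a^4$) also $\|\Theta_N\|_{L^4}^4$ up to a $\|X_N\|_{L^4}^4$ remainder. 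The dangerous pieces are those involving $Q_{s,N}(X_N)$ and the cross term with $(D^sX_N)$; they must be absorbed into the coercive part modulo random constants with uniformly bounded moments.

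The main obstacle is the singular quadratic-in-$\Theta$ term $\int Q_{s,N}(X_N)\Theta_N^2$. By Proposition~\ref{PROP:random_distr} (the regularity statement in the spirit of Lemma~\ref{LEM:MWX}), $Q_{s,N}(X_N)$ is bounded in $L^p(\O;\C^{-1-\eps})$ uniformly in $N$ for every $\eps>0$ and finite $p$. Using the duality~\eqref{dual}, the fractional Leibniz rule~\eqref{prod} combined with the Besov embedding~\eqref{emb_b}, and the interpolation~\eqref{interp}, I estimate
\begin{align*}
\Bigl|\int_{\T^3} Q_{s,N}(X_N)\Theta_N^2\Bigr|
\les \|Q_{s,N}(X_N)\|_{\C^{-1-\eps}}\,\|\Theta\|_{H^{s+1}}^{\alpha}\|\Theta\|_{L^4}^{2-\alpha}
\end{align*}
for some $\alpha=\alpha(s,\eps)\in(0,1)$ when $s>\tfrac32$. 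Young's inequality then converts the right-hand side into $\delta\|\Theta\|_{H^{s+1}}^2+\delta\|\Theta\|_{L^4}^4+C_\delta\|Q_{s,N}(X_N)\|_{\C^{-1-\eps}}^K$ for any $\delta>0$ and some $K=K(c)>0$; choosing $\delta$ small, the first two terms are absorbed into the coercive part. The cross term $\int(D^sX_N)(D^s\Theta_N)(X_N+\Theta_N)^2$ is handled analogously, pairing $D^sX_N\in\C^{-1/2-\eps}$ with the remaining factors through \eqref{prod2} and \eqref{alge}, while the remaining mixed lower-order pieces are controlled by Cauchy--Schwarz and further Young splittings. This yields a uniform-in-$N$ lower bound
\begin{align*}
-\log\E_{\nu_s}\bigl[e^{-cR_{s,N}(u)}\bigr]\ge -C_c-C_c\,\E\bigl[\|Q_{s,N}(X_N)\|_{\C^{-1-\eps}}^K+\|X_N\|_{\C^{-1/2-\eps}}^K+\|X\|_{L^4}^K\bigr],
\end{align*}
whose right-hand side is bounded uniformly in $N$ by Proposition~\ref{PROP:random_distr} and the Wiener chaos estimate (Lemma~\ref{LEM:hyp}). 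This establishes~\eqref{exp1}.

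For~\eqref{exp2}, Lemma~\ref{LEM:QFT2} gives $R_{s,N}\to R_s$ in every $L^p(\nu_s)$, hence in $\nu_s$-probability, so by the continuity of $x\mapsto e^{-x}$ we have $e^{-R_{s,N}(u)}\to e^{-R_s(u)}$ in $\nu_s$-probability. Combined with the uniform $L^q(\nu_s)$-bound~\eqref{exp1} at any exponent $q>p$, which ensures uniform integrability in $L^p(\nu_s)$, Vitali's convergence theorem upgrades the convergence to $L^p(\nu_s)$, completing the proof.
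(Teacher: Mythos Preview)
Your overall strategy matches the paper's: express the partition function via a variational formula, expand $R_{s,N}(X+\Theta)$, and absorb the singular cross terms into the coercive part of the action. The paper uses a Girsanov-based variational formula (Proposition~\ref{PROP:var}) rather than Bou\'e--Dupuis directly, but as the paper itself remarks, either works here. Your argument for \eqref{exp2} via uniform integrability and Vitali is exactly what the paper does.

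There is, however, a genuine gap in your treatment of the cross term $\int (D^sX_N)(D^s\Theta_N)(X_N+\Theta_N)^2$. You list only $\|\Theta\|_{H^{s+1}}^2$ and $\|\Theta_N\|_{L^4}^4$ as coercive quantities, but these are \emph{not} sufficient to absorb the cubic-in-$\Theta$ piece $\int (D^sX)(D^s\Theta)\Theta^2$. Indeed, any splitting through duality and Leibniz leads, at best, to a bound of the form $\|D^sX\|_{\C^{-1/2-\eps}}\,\|\Theta\|_{H^{s+1}}\,\|\Theta\|_{L^4}^2$, and the Young exponents then satisfy $\tfrac12+\tfrac24=1$, leaving no room for a remainder in $\|D^sX\|$. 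The paper's Lemma~\ref{LEM:lin} (estimate~\eqref{lin_3}) resolves this by using the \emph{third} coercive quantity $\|D^s\Theta\cdot\Theta\|_{L^2}^2=\int(D^s\Theta)^2\Theta^2$, which appears explicitly in the coercive part $\U_N(\theta)$ in~\eqref{v_N1}. Your nonnegative term $\tfrac32\int(D^s\Theta_N)^2(X_N+\Theta_N)^2$ does dominate this quantity up to a correction $\int(D^s\Theta)^2X^2+2\int(D^s\Theta)^2X\Theta$ that can in turn be absorbed (this is essentially Lemma~\ref{LEM:quadV} run in reverse, together with interpolation of $\|\Theta\|_{H^s}$ between $\|\Theta\|_{H^{s+1}}$ and $\|\Theta\|_{L^4}$), but you do not mention any of this. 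Since this is precisely the most delicate estimate in the whole construction, the phrase ``handled analogously'' is not adequate; you should identify $\|D^s\Theta\cdot\Theta\|_{L^2}^2$ as part of your coercive budget and carry out the corresponding Young splitting explicitly.
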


While the first part of Proposition \ref{PROP:QFT0}
follows  from  Proposition \ref{PROP:QFT} with $p = 1$, 
we need to have the uniform bound \eqref{exp1}
for some $p > 1$ for the proof of Proposition~\ref{PROP:meas2}. 
See~\eqref{N1}.
Note that this requirement on a higher integrability for some $p > 1$
is analogous to the situation in Bourgain's construction on
invariant Gibbs measures for Hamiltonian PDEs \cite{BO94},
where, as in \eqref{N1}, the analysis of the weighted Gaussian measure
needs to be reduced to that of the underlying Gaussian measure by Cauchy-Schwarz inequality.
Since the argument is identical for any $p \geq 1$, we only present details for the case $p =1$. We point out that the $L^p$-convergence \eqref{exp2}
is a consequence of 
 the uniform exponential moment bound~\eqref{exp1} and the softer convergence in measure (as a consequence of Lemma~\ref{LEM:QFT2}). 
See Remark~3.8 in \cite{TZ2}. 
Therefore, we focus on proving  the uniform bound \eqref{exp1}. 

In the next subsection, we prove Lemma \ref{LEM:QFT2}. 
The subsequent subsections are devoted to the proof of  Proposition \ref{PROP:QFT}.

\subsection{Regularity of  random distributions}
\label{SUBSEC:QFT1}

Let $u$ be distributed according to $\nu_s$
and $Q_{s, N}$ be as in \eqref{H1c}
with $\s_N$  in \eqref{sigma}.
In this case, 
 we have
\begin{align}
 :\!(D^s u_N)^2\!: \,   =  \DD, 
\label{XX2}
\end{align}

\noi
where the left-hand side is the standard notation for the Wick renormalization.

We first state and prove the regularity properties of 
(products of) certain random distributions.
The proof of Lemma~\ref{LEM:QFT2} is presented at the end of this subsection.

\begin{proposition} \label{PROP:random_distr}
Let $ s \geq 1$ and  $\varepsilon > 0$. Then, there exists 
$C=C(s,\varepsilon)>0$ such that for any $N \in \N$ and any $2 \leq p < \infty$, we have
\begin{align}
\label{rd1}\| :\!(D^s u_N)^2\!: \|_{L^p(\nu_s, \, \C^{-1-\varepsilon})} &\leq Cp,  \\
\label{rd2}\| \dd^\kk v_N \, \partial^\alpha u_N \|_{L^p(\vec \nu_s (u, v),\,  \C^{-1-\varepsilon})} &\leq Cp  
\qquad \text{for $|\kk|=s-1$ and $|\alpha| = s$,} \\
\label{rd3}\| \dd^\kk v_N \, \partial^\alpha u_N \|_{L^p(\vec \nu_s(u, v), \, \C^{-\frac{1}{2}-\varepsilon})} &\leq Cp  
\qquad \text{for $|\kk|=s-1$ and $|\alpha| \leq  s-1$}, 	
\end{align}

\noi
where $u_N = \pi_N u $ and $v_N  =\pi_N v$.
Moreover, as $N \rightarrow \infty$,  the sequences above converge to limits denoted by 
$:\!(D^s u)^2\!:$ and $\dd^\kk v \, \dd^\alpha u$ with respect to the same topologies. 
\end{proposition}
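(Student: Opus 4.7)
The plan is to apply Lemma \ref{LEM:MWX} to each of the three families of random distributions, which reduces matters to verifying stationarity, membership in a fixed Wiener chaos, and the appropriate decay of the Fourier-coefficient variance.

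First, each object is stationary on $\T^3$ because the Gaussian coefficients in \eqref{series4} are jointly rotation-invariant in phase, so translation acts on Fourier modes only by phases and preserves the joint law. For chaos membership, the Wick square $:\!(D^s u_N)^2\!: \, = (D^s u_N)^2 - \s_N$ lies in $\mathcal H_2$ by item (ii) of the list of facts stated just after Lemma \ref{LEM:hyp}, while each product $\dd^\kk v_N \cdot \dd^\al u_N$ lies in $\mathcal H_2$ by item (iii) there, since $u$ and $v$ are independent under $\nuu_s$. Consequently, hypercontractivity (Lemma \ref{LEM:hyp}) with $k=2$ contributes a factor $(p-1)^{2/2}\leq p$, which is the source of the linear-in-$p$ dependence.

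Second, I compute the Fourier-mode variances. From \eqref{series4} and \eqref{sigma}, for $n\neq 0$ one has $\E\big[|\widehat{\dd^\al u_N}(n)|^2\big]\lesssim \jb{n}^{2|\al|-2s-2}$ and $\E\big[|\widehat{\dd^\kk v_N}(n)|^2\big]\lesssim \jb{n}^{2|\kk|-2s}$. Using independence of $\{g_n\}$ and $\{h_n\}$, and subtracting the diagonal for the Wick square, I obtain
\begin{align*}
\E\big[|\widehat{:\!(D^s u_N)^2\!:}(n)|^2\big],\ \E\big[|\widehat{\dd^\kk v_N\,\dd^\al u_N}(n)|^2\big]\ \lesssim\ \sum_{k_1+k_2=n}\jb{k_1}^{-2}\,\jb{k_2}^{-2(s+1-|\al|)},
\end{align*}
with $|\al|=s$ in \eqref{rd1}--\eqref{rd2} and $|\al|\leq s-1$ in \eqref{rd3}. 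The standard three-dimensional convolution estimate
\begin{align*}
\sum_{k\in\Z^3}\jb{k}^{-a}\,\jb{n-k}^{-b}\ \lesssim\ \jb{n}^{\max(3-a-b,\,-a,\,-b)}\qquad (a+b>3,\ a,b>0)
\end{align*}
then yields the bound $\jb{n}^{-1}$ when both exponents equal $2$ (cases \eqref{rd1}, \eqref{rd2}) and the bound $\jb{n}^{-2}$ when one exponent is $\geq 4$ (case \eqref{rd3}). In view of $d=3$, these match the hypothesis of Lemma \ref{LEM:MWX}(i) with $s_0=-1$ and $s_0=-\frac12$ respectively, producing regularity in $\C^{-1-\eps}$ and $\C^{-\frac12-\eps}$ for any $\eps>0$.

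For the convergence as $N\to\infty$, I apply Lemma \ref{LEM:MWX}(ii). For $M\geq N\geq 1$, the difference between the $M$- and $N$-truncated distributions has Fourier coefficients supported where at least one frequency exceeds $N$, and on that set $\jb{k}^{-2}\leq N^{-2\ta}\jb{k}^{-2+2\ta}$ for any small $\ta>0$. Reinserting this trade-off into the same convolution estimate gains an extra factor of $N^{-2\ta}$ at the cost of lowering $s_0$ by $\ta$; choosing $\ta<\eps$ delivers the stated $L^p$-convergence. I expect no serious obstacle: the argument is essentially bookkeeping of exponents. The only mild subtlety is the treatment of the zero Fourier mode of $u$, which by \eqref{series4} carries its own separate Gaussian coefficient $g_0(\o)$, but in each of the three expressions under consideration this mode is either killed by differentiation or absorbed into the Wick constant $\s_N$ and thus produces no trouble.
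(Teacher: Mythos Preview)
Your proposal is correct and follows essentially the same route as the paper's own proof: compute the second moments of the Fourier coefficients via the Karhunen--Lo\`eve expansion, bound them by a discrete convolution (the paper cites \cite[Lemma~4.2]{MWX} for precisely the estimate you wrote), and then invoke Lemma~\ref{LEM:MWX} together with hypercontractivity to obtain the $L^p$ bounds and the convergence as $N\to\infty$. The paper also splits off the $n=0$ Fourier mode of the Wick square explicitly (your ``subtracting the diagonal'' remark), but otherwise the arguments are identical.
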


We will also use this proposition 
in proving the renormalized energy estimate in Section~\ref{SEC:ENERGY}.

\begin{proof}

We only prove \eqref{rd1} in the following.
 The other estimates \eqref{rd2}
 and  \eqref{rd3} follow in a similar manner, with the simplification that no renormalization is needed due to the independence of $u$ and $v$ under $\nuu_s$.
The regularity $-1 - \eps$ 
in \eqref{rd2} is naturally expected
in view of the regularities $< - \frac 12$
for each of 
$ \dd^\kk v_N $ and $ \partial^\alpha u_N $.
A similar comment applies to~\eqref{rd3},
where the regularity of $\dd^\kk v$ is less than $-\frac12$.

Noting that 
\[ \frac{|n|^s}{(|n|^2 + |n|^{2s + 2})^\frac{1}{2}} \les \frac{1}{\jb{n}}\]

\noi
for any $n \in \Z^3\setminus\{0\}$, 
it follows from  the Karhunen-Lo\`eve expansion \eqref{series4} that 
\begin{align} \label{Xseries}
\begin{split}
\E_{\nu_s} \Big[ \big| \F\big\{ :\!(D^s u_N)^2 \!:\big\} (n) \big|^2  \Big] 
& \les \sum_{\substack{n_1, n_2 \in \Z^3\\|n_j| \leq N}}
\frac{\big|\E [ g_{n_1}g_{n-n_1}g_{-n_2}g_{-n+n_2} ]\big|}
{\jb{n_1} \jb{n - n_1}\jb{n_2}\jb{n - n_2}} \ind_{\{n \neq 0 \}} \\
&\hphantom{X}
+  \sum_{\substack{n_1, n_2 \in \Z^3\\|n_j| \leq N}}
\frac{\big|\E \big[ (|g_{n_1}|^2 - 1)(|g_{n_2}|^2 - 1) \big]\big|}{\jb{ n_1}^2 \jb{ n_2}^2} \ind_{\{n = 0 \}}
\end{split}
\end{align}

\noi
for any $n \in \Z^3$,
where $\F$ denotes Fourier transform.
In the first sum on the right-hand side of  $\eqref{Xseries}$, 
we note that due to the independence (modulo the conjugates) of the $g_n$'s and by Wick's theorem, 
all non-vanishing terms must satisfy $n_1 = n_2$ or $n_1 = n - n_2$. 
Thus, we obtain
\begin{equation} 
 \sum_{\substack{n_1, n_2 \in \Z^3\\|n_j| \leq N}}
\frac{\big|\E [ g_{n_1}g_{n-n_1}g_{-n_2}g_{-n+n_2} ]\big|}
{\jb{n_1} \jb{n - n_1}\jb{n_2}\jb{n - n_2}} \ind_{\{n \neq 0 \}} \\
\les \sum_{n_1 \in \Z^3} \frac{1}{\jb{n_1}^2 \jb{n - n_1}^2} 
\les \frac{1}{\jb{n}}
\label{series2}
\end{equation}

\noi
uniformly in $N \in \N$, 
where in the last inequality we used a standard result on discrete convolutions (see Lemma~4.2 in~\cite{MWX}). 
In the second sum on the right-hand side of  \eqref{Xseries}, 
we note that, by Wick's theorem, 
the contribution from  $|n_1| \neq |n_2|$ vanishes.
Thus, we obtain
\begin{align}
 \sum_{\substack{n_1, n_2 \in \Z^3\\|n_j| \leq N}}
\frac{\big|\E \big[ (|g_{n_1}|^2 - 1)(|g_{n_2}|^2 - 1) \big]\big|}{\jb{ n_1}^2 \jb{ n_2}^2} \ind_{\{n = 0 \}}
\les 1,
\label{series3}
\end{align}

\noi
uniformly in $N \in \N$.
Putting \eqref{series2} and \eqref{series3} together, we obtain
\begin{equation*}
\E \Big[ \big| \F \big\{ :\!(D^s u_N)^2 \!:\big\} (n) \big|^2  \Big]
 \les \frac{1}{\jb{n}} 
\end{equation*}
	
\noi
for any $n \in \Z^3$ and $N \in \N$.

By a similar computation, we have
\begin{equation*} 
\E \Big[ \big| \F \big\{ :\!(D^s u_N)^2 \!: - :\!(D^s u_M)^2 \!:\big\} (n) \big|^2  \Big]
 \les \frac{1}{N^\ta \jb{n}^{1-\ta}}
\end{equation*}
	
\noi
for any $n \in \Z^3$,  any $M \geq N \geq 1$,
and $\ta \in [0, 1]$.
Note that  $:\!(D^s u_N)^2\!:$ lies in the second homogeneous Wiener chaos
$\mathcal H_2$.
Hence, by Lemma~\ref{LEM:MWX} with $\ta >0$ sufficiently small, 
we conclude that 
 $:\!(D^s u_N)^2\!:$ converges to some
 $:\!(D^s u)^2\!:$
 in $L^p(\nu_s; \C^{-1-\eps}(\T^3))$ for any finite $p \geq 2$. 
  \end{proof}

We now present the proof of Lemma  \ref{LEM:QFT2}.

\begin{proof}[Proof of Lemma \ref{LEM:QFT2}]
For  $s > \frac 32$, Lemma~\ref{LEM:MWX}
 implies $u_N$ converges to $u$ in  $L^p(\nu_s; \C^\s)$ for any finite $p \geq 2$ 
 and  any $\s < s- \frac 12$.
In the following, we choose $\s > 0$ sufficiently close to $s - \frac 12$.
 Then, by the algebra property \eqref{alge}, 
 we see that $u_N^2$ (and $u_N^4$, respectively) converges to  $u^2$ 
 (and $u^4$, respectively) 
 in  $L^p(\nu_s;\C^\s)$ for any finite $p \geq 2$. 
 
Proposition \ref{PROP:random_distr} asserts that 
$:\!(D^s u_N)^2\!:$ converges to $:\!(D^s u)^2\!: \, \in L^p(\nu_s, \C^{-1-\varepsilon}(\T^3))$ 
for any $\varepsilon > 0$. 
Recall from \eqref{prod2}  that the bilinear multiplication map from $\C^{s_1} \times \C^{s_2}$ 
to $\C^{s_1}$ is a continuous operation for $s_1 < 0 < s_2$ such that $s_1 + s_2 >0$. 
Therefore, by choosing $\s > 1 + \eps$ (which is possible since $s > \frac 32$), 
we conclude that 
\[ :\!(D^s u)^2\!:u^2 = \lim_{N \to \infty} :\!(D^s u_N)^2\!: u_N^2\]

\noi
exists as an element in $L^p(\nu_s; \C^{-1-\varepsilon}(\T^3))$ for all finite $p \geq 2$. 
This means that
\begin{equation}
\label{qft2_pf_1}
\frac 32 :\!(D^s u)^2\!: u^2 + \frac 14 u^4 \in L^p(\nu_s, \C^{-1-\varepsilon}(\T^3)).
\end{equation}
Lemma \ref{LEM:QFT2} then follows from \eqref{qft2_pf_1}. 
\end{proof}

\subsection{Variational formulation}
\label{SUBSEC:var}

In this subsection, we follow the argument in \cite{BG} 
and derive a variational formula for the normalization constant
$\ZZ_{s, N}$ in \eqref{Z1}, which is based on a well-known representation of the classical Gibbs variational principle on the Wiener space \cite[Proposition 4.5.1]{DE}.

Given small $\eps >0$, 
let $\O_\eps = C(\R_+, \C^{-\frac 32-\eps}(\T^3))$ 
equipped with its Borel $\s$-algebra.
Denote by\footnote{In the remaining part
of this section, we use the standard notation in stochastic analysis where subscripts denote parameters for stochastic processes.} 
 $\{X_t\}$  the coordinate process on $\O_\eps$ 
 and consider the probability measure $\PP$ that makes 
 $\{X_t\}$ a cylindrical Brownian motion in $L^2(\T^3)$.
 Namely, we have
\begin{align*}
X_t = \sum_{n \in \Z^3} B_t^n e^{i n \cdot x},
\end{align*}

\noi
where  
$\{B_t^n\}_{n \in \Z^3}$ is a sequence of independent complex-valued\footnote{We normalize
$B_t^n$ so that $\text{Var}(B_t^n) = t$.  Moreover, we impose that $B^0_t$ is real-valued.}  Brownian motions such that 
$\cj{B_t^n}= B^{-n}_t$, $n \in \Z^3$.
Then, define a centered Gaussian process $\{ Y_t\}$
by 
\begin{align}
Y_t
=  \J^{-s-1}X_t
 \stackrel{\text{def}}{=} B^0_t + \sum_{n \in \Z^3\setminus \{0\}} \frac{B^n_t}{(|n|^2+|n|^{2s+2})^{\frac{1}{2}}}
 e^{ in \cdot x}.
\label{Z2}
\end{align}

\noi
Then, in view of \eqref{series4}, 
we have $\Law_{\PP}(Y_1) = \nu_s$. 
By truncating the sum in \eqref{Z2}, 
we also define the truncated process $Y_t^N = \pi_N Y_t$ 
with the property $\Law_{\PP}(Y_1^N) = \Law_{\nu_s}(\pi_N u)$. 
Note that we have $\E [(D^s Y_1^N)^2] = \s_N$,
where $\s_N$ is as in \eqref{sigma}.
For simplicity of notations, 
 we suppress dependence on $N \in \N$ when it is clear from the context.

Let $\Ha$ denote the space of progressively measurable processes that belong to
$L^2([0,1]; L^2(\T^3))$, $\PP$-almost surely. 
We say that an element $\dr$ of $\Ha$ is a {\it drift}.
Given a drift $\dr \in \Ha$, 
we  define the measure $\Q^\dr$ whose Radon-Nikodym derivative 
with respect to $\PP$
is given by the following stochastic exponential:
\begin{equation}
\frac{d\Q^\dr}{d\PP} = e^{\int_0^1 \jb{\dr_t,  dX_t} - \frac{1}{2} \int_0^1 \| \dr_t \|_{L^2_x}^2dt}.
\label{Z2a}
\end{equation}

\noi
Here, $\jb{\cdot, \cdot}$ denotes the inner product on $L^2(\T^3)$.
Then, by letting  $\Hc$ denote the space of drifts such that $\Q^\dr(\O_\eps) = 1$,
it follows from Girsanov's theorem (\cite[Theorem 10.14]{DZ} and \cite[Theorems 1.4 and 1.7 in Chapter VIII]{RV})
that the process $X_t$ is a semimartingale under $\Q^\dr$ 
with a decomposition:
\begin{align}
 X_t = X_t^\dr + \int_0^t \dr_{t'}dt',
\label{Z3}
\end{align}

\noi
 where $X_t^\dr$ is now a cylindrical Brownian motion in $L^2(\T^3)$ under
 the new measure $\Q^\dr$.
From~\eqref{Z3}, 
we also obtain the decomposition:
\begin{align}
 Y_t = Y_t^\dr + I_t(\dr), 
\label{Z4}
\end{align}

\noi
where $Y_t^\dr = \J^{-s-1} X_t^\dr$ and $I_t(\dr) = \int_0^t \J^{-s-1} \dr_{t'} dt'$.
In the following, we use $\E$ to denote an expectation 
with respect to $\PP$,
while we use $\E_\Q$ for an expectation with respect to some other probability measure $\Q$.

Before proceeding further, let us recall the following 
estimate (\cite[Lemma 2.6]{Fol}):
\begin{align}
\int_0^1 \| \dr_t \|_{L^2_x}^2dt 
\leq 2 H(\Q^\dr|\PP), 
\label{Fol1}
\end{align}

\noi
where 
$H(\Q^\dr|\PP)$
denotes 
the relative entropy of $\Q^\dr$ with respect to $\PP$ defined by 
\[ H(\Q^\dr|\PP) = \E_{\Q^\dr}\bigg[\log \frac{d\Q^\dr}{d\PP}\bigg]
= \E\bigg[\frac{d\Q^\dr}{d\PP}\log \frac{d\Q^\dr}{d\PP}\bigg].\]

\noi
With the notations introduced above, 
we have the following
variational characterization
of the partition function $\ZZ_{s, N}$
defined in \eqref{Z1}.

\begin{proposition} \label{PROP:var}
For any $N \in \N$, we have
\begin{equation} \label{var}
- \log \ZZ_{s,N} = \inf_{\dr \in \Hc} \E_{\Q^\dr} 
\bigg[ R_{s, N}(Y_1^\dr + I_1(\dr)) + \frac{1}{2} \int_0^1 \| \dr_t \|_{L^2_x}^2 dt \bigg].
\end{equation}

\end{proposition}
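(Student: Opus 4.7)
The plan is to obtain \eqref{var} as a consequence of the Boué--Dupuis variational formula applied to the cylindrical Brownian motion $\{X_t\}$ on $L^2(\T^3)$. Recall that the Boué--Dupuis identity (in the form extended to unbounded functionals by Üstünel) states: for a measurable $F : \Omega_\eps \to \R$ with $e^{-F(X)} \in L^1(\PP)$ and appropriate lower-bound moments on $F(X)$,
\begin{equation*}
-\log \E\big[e^{-F(X)}\big] = \inf_{\dr \in \Ha} \E\bigg[F\Big(X_\cdot + \int_0^\cdot \dr_{t'}\, dt'\Big) + \tfrac12 \int_0^1 \|\dr_t\|_{L^2_x}^2\, dt\bigg].
\end{equation*}
I would apply this to the functional $F(X) := R_{s,N}\bigl(\pi_N \J^{-s-1}X_1\bigr) = R_{s,N}(Y_1^N)$. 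Since $\Law_\PP(Y_1) = \nu_s$, the left-hand side is exactly $-\log \ZZ_{s,N}$ by the definition \eqref{Z1}.

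Next, I would verify the hypotheses for this $F$. For fixed $N$, both constituents of
\begin{equation*}
R_{s,N}(u) = \tfrac32 \int_{\T^3} \!:\!(D^s u_N)^2\!:\, u_N^2 + \tfrac14 \int_{\T^3} u_N^4
\end{equation*}
are polynomials in the finitely many Gaussians $\{Y_1^N\,\widehat{}\,(n) : |n|\leq N\}$, and Young's inequality absorbs the signed term into the quartic one to give $R_{s,N} \geq -C(N) \|\!:\!(D^s u_N)^2\!:\!\|_{L^2}^2$. This yields $e^{-R_{s,N}(Y_1)} \in L^\infty(\PP)$ and $R_{s,N}(Y_1) \in L^p(\PP)$ for every finite $p$ via Wiener chaos bounds. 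A truncation $F_M := (F \wedge M) \vee (-M)$, combined with dominated convergence on the left side and monotone/dominated control on near-optimal drifts on the right, lifts Boué--Dupuis to the unbounded $F$ at hand.

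Once the identity above is established, I would rewrite the right-hand side as an expectation under $\Q^\dr$ via Girsanov. By \eqref{Z2a}--\eqref{Z4}, for each $\dr \in \Hc$ the $\PP$-law of $\bigl(X_\cdot + \int_0^\cdot \dr_{t'}\, dt'\bigr)$ coincides with the $\Q^\dr$-law of $(X_\cdot)$, so that applying $\pi_N \J^{-s-1}$ at $t=1$ gives
\begin{equation*}
\E\bigl[R_{s,N}(Y_1^N + I_1^N(\dr))\bigr] = \E_{\Q^\dr}\bigl[R_{s,N}(Y_1^\dr + I_1(\dr))\bigr],
\end{equation*}
and an analogous change of measure, together with \eqref{Fol1} (which is an equality for the Brownian shift), converts the entropy/penalty term. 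To justify restricting the infimum from $\Ha$ to $\Hc$, I would observe that drifts with $\Q^\dr(\Omega_\eps) < 1$ have infinite relative entropy and therefore cannot approach the infimum; conversely, $\Hc$-drifts are dense enough to realize the infimum in view of the Üstünel extension.

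The main obstacle is the rigorous extension of Boué--Dupuis to the unbounded $F$: the formula is usually proved for bounded $F$ and, while $R_{s,N}$ is bounded below for fixed $N$, its upper unboundedness must be handled. The key simplification at this stage is that all the randomness lives in the finite-dimensional space $\pi_N L^2(\T^3)$, so the truncation-and-passage-to-the-limit argument is entirely standard and parallels the proof of the corresponding variational formula in Barashkov--Gubinelli \cite{BG}. No uniformity in $N$ is needed here; the delicate $N$-uniform minimization is the subject of Subsection \ref{SUBSEC:const2}.
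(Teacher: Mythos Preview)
Your approach takes a genuinely different route from the paper, and it contains a real gap in the conversion step.

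The paper does \emph{not} invoke Bou\'e--Dupuis at all (see the Remark immediately following the proof). Instead it proves \eqref{var} directly as a Gibbs--Donsker--Varadhan identity: the inequality $-\log\ZZ_{s,N}\le \E_{\Q^\dr}[R_{s,N}(Y_1)+\tfrac12\int\|\dr\|^2]$ follows from the change of measure $d\PP = (d\PP/d\Q^\dr)\,d\Q^\dr$ and Jensen's inequality applied to $-\log$, after which the Girsanov decomposition \eqref{Z3}--\eqref{Z4} and the vanishing of the $\Q^\dr$-martingale $\int\jb{\dr,dX^\dr}$ give \eqref{vform_aim}. Equality is then exhibited by choosing $\Q^N = \ZZ_{s,N}^{-1}e^{-R_{s,N}(Y_1)}\,d\PP$ and using the Brownian martingale representation theorem to realize $\Q^N$ as $\Q^{\wt\dr^N}$ for some $\wt\dr^N\in\Hc$. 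This argument is elementary and self-contained.

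Your proposal instead starts from Bou\'e--Dupuis in the form $-\log\E[e^{-F(X)}]=\inf_{\dr\in\Ha}\E_\PP[F(X+\int\dr)+\tfrac12\int\|\dr\|^2]$ and then attempts to ``convert'' to \eqref{var}. The key claim---that the $\PP$-law of $X_\cdot+\int_0^\cdot\dr$ coincides with the $\Q^\dr$-law of $X_\cdot$---is false for random (adapted) drifts. Under $\PP$ the process $\wt X := X+\int\dr(X)$ has drift depending on the \emph{driving} Brownian motion, whereas under $\Q^\dr$ the Girsanov decomposition $X = X^\dr+\int\dr(X)$ makes the drift depend on the \emph{solution} $X$ itself; these are different SDEs with different laws (e.g.\ take $\dr_t=X_t$ and compare variances at $t=1$). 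For the same reason $\E_\PP[\int\|\dr\|^2]\ne\E_{\Q^\dr}[\int\|\dr\|^2]$ in general, so the penalty term does not convert either. Thus the bridge from the Bou\'e--Dupuis functional to the $\E_{\Q^\dr}$-functional in \eqref{var} is not the identity on drifts, and your argument does not establish the stated proposition. The Bou\'e--Dupuis formula is a valid (indeed stronger) alternative and would suffice for the uniform lower bound in Subsection~\ref{SUBSEC:const2}, but it is a different variational identity from \eqref{var}; proving \eqref{var} itself still requires the Jensen/martingale-representation argument the paper gives.
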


\begin{proof}
As a preliminary step, we first derive bounds
on $\ZZ_{s,N}$ and 
\[\E \bigg[ \frac{e^{-R_{s, N}(Y_1)}}{\ZZ_{s,N}} \log \bigg( \frac{e^{-R_{s, N}(Y_1)}}{\ZZ_{s,N}} \bigg)  \bigg].\] 
	
\noi
Note that these bounds imply 
that the measure $\frac{e^{-R_{s, N}(Y_1)}}{\ZZ_{s,N}}d\PP$ has a finite relative entropy with respect to $\PP$.

From \eqref{Z1},  Jensen's inequality, 
and \eqref{K1}, 
there exists finite $C(N) > 0$ such that
\begin{equation} 
\ZZ_{s,N} \geq e^{-\E[ R_{s, N}(Y_1) ]} 
\geq e^{-\E \big[ \frac 32 \int  (D^s Y^N_1)^2(Y_1^N)^2  +  \frac 14\int (Y_1^N)^4 \big]} \geq C(N).
\label{jensen1}
\end{equation}

\noi
In view of the following pointwise lower bound:
\begin{align}
\frac 32 (D^s Y_1^N)^2   (Y_1^N)^2 & -  \frac 32 \s_N (Y_1^N)^2 +  \frac 14(Y_1^N)^4
 \geq -\frac 32 \s_N     (Y_1^N)^2 +  \frac 14(Y_1^N)^4\notag \\
&   \geq -\frac 92 \s_N^2 +  \frac{1}{8}(Y_1^N)^4
 \geq - C(N) >-\infty, 
\label{Z5}
\end{align}

\noi
it follows from 
 $\eqref{jensen1}$, 
Cauchy's inequality, 
and  
Lemma \ref{LEM:QFT2}
that there exists finite $C(N) > 0$ such that
\begin{align}
\label{rent} 
\begin{split}
\E \bigg[ \frac{e^{-R_{s, N}(Y_1)}}{\ZZ_{s,N}} \log \bigg( \frac{e^{-R_{s, N}(Y_1)}}{\ZZ_{s,N}} \bigg)  \bigg] 
& \leq C(N) \E \Big[ e^{-R_{s, N}(Y_1)} \big(1 + \log e^{-R_{s, N}(Y_1)}\big)\Big] \\
& \leq C(N) \E \Big[ e^{-2R_{s, N}(Y_1)}+  |R_{s, N}(Y_1)|^2  + 1\Big]  \\
& \leq C(N) < \infty. 
\end{split}
\end{align}

Now, fix $\dr \in \Hc$. We show that
\begin{equation} \label{vform_aim}
- \log \ZZ_{s,N} \leq \E_{\Q^\dr} 
\bigg[ R_{s, N}(Y_1^\dr + I_1(\dr)) + \frac{1}{2} \int_0^1 \| \dr_t \|_{L^2_x}^2 dt \bigg].
\end{equation}  

\noi	
Suppose that 	
$\E_{\Q^\dr} \Big[ \int_0^1 \| \dr_t \|_{L^2_x}^2 dt \Big] = \infty$.
Then, $\eqref{vform_aim}$ holds trivially since
it follows from 
the decomposition \eqref{Z4} of $Y_t$ under $\Q^\dr$
and 
Cauchy's inequality
with Lemma \ref{LEM:QFT2},  \eqref{jensen1}, and~\eqref{Z5}
that
\begin{equation*}
\E_{\Q^\dr} \Big[ | R_{s, N}(Y_1^\dr + I_1(\dr)) | \Big] 
= \E \bigg[ |R_{s, N}(Y_1) | \frac{e^{-R_{s, N}(Y_1)}}{\ZZ_{s,N}} \bigg] < \infty.
\end{equation*}

Next, suppose  that 
\begin{align}
\E_{\Q^\dr} \bigg[ \int_0^1 \| \dr_t \|_{L^2_x}^2 dt \bigg] < \infty.
\label{Z6} 
\end{align}

\noi
Note that $\ZZ_{s,N} = \E [e^{-R_{s, N}(Y_1)} ]$. 
Then, 
by changing the measure
with \eqref{Z2a}, Jensen's inequality,
 and applying the decompositions \eqref{Z3} and \eqref{Z4} 
 of $X_t$ and $Y_t$ under $\Q^\dr$, we obtain
\begin{align} 
\begin{split}
-\log \ZZ_{s,N} 
& \leq \E_{\Q^\dr} \bigg[ R_{s, N}(Y_1) + \int_0^1 \jb{\dr_t,  dX_t} 
- \frac{1}{2} \int_0^1 \| \dr_t \|_{L^2_x}^2 dt \bigg] \\ 
&= \E_{\Q^\dr} \bigg[ R_{s, N}(Y_1^\dr + I_1(\dr)) 
+ \int_0^1 \jb{\dr_t,  dX^\dr_t} + \frac{1}{2} \int_0^1 \| \dr_t \|_{L^2_x}^2 dt \bigg].
\end{split}
\label{vform_1}
\end{align}

\noi
From \eqref{Z6}, 
we see that  the process $ \int_0^t \jb{\dr_{t'},  dX^\dr_{t'}}$ is a $\Q^\dr$-martingale and hence 
we conclude that 
\begin{align}
\E_{\Q^\dr} \bigg[ \int_0^1 \jb{\dr_t,  dX^\dr_t} \bigg] = 0.
\label{vform_1a}
\end{align}

\noi
Therefore, from \eqref{vform_1} and \eqref{vform_1a}, 
 we obtain \eqref{vform_aim}.

Next, we show that 
the infimum in \eqref{var} is indeed achieved	
for a special choice of drift. 
Given  $N \in \mathbb{N}$, define $\Q^N$ by the density
\begin{equation} \label{vform_2}
\frac{d\Q^N}{d\PP} = \frac{e^{-R_{s, N}(Y_1)}}{\ZZ_{s,N}}. 
\end{equation}

\noi
By the Brownian martingale representation theorem
(\cite[Proposition 1.6 in Chapter VIII]{RV}),
 there exists a drift $\wt{\dr}^N \in \Hc$ such that
\begin{equation} \label{vform_rep}
\frac{d\Q^N}{d\PP} = e^{ \int_0^1 \wt{\dr}^N_t dX_t - \frac{1}{2} \int_0^1 \| \wt{\dr}^N_t \|_{L^2_x}^2dt}.
\end{equation}

\noi
Then, from \eqref{vform_2} and\eqref{vform_rep}, we obtain
\begin{equation} \label{vform_3}
- \log \ZZ_{s,N} =  R_{s, N}(Y_1) + \int_0^1 \jb{\wt{\dr}^N_t,  dX_t} - \frac{1}{2} \int_0^1 \| \wt{\dr}^N_t \|_{L^2_x}^2dt.
\end{equation}

\noi
Taking expectations of \eqref{vform_3} with respect to $\Q^N$ 
and using the decompositions \eqref{Z3} and~\eqref{Z4} of $X_t$ and $Y_t$ under $\Q^N$, we obtain
\begin{equation} \label{vform_4}
- \log \ZZ_{s,N} =  \E_{\Q^N} 
\bigg[ R_{s, N}\big( Y_1^{\wt{\dr}^N} + I_1(\wt{\dr}^N)\big) + \int_0^1 \jb{\wt{\dr}^N_t,  dX^{\wt{\dr}^N}_t} 
+ \frac{1}{2} \int_0^1 \| \wt{\dr}^N_t \|_{L^2_x}^2dt \bigg].
\end{equation}

\noi
On the other hand, 
from  \eqref{vform_2} and \eqref{rent}, we have
\begin{align}
\E_{\Q^N} \bigg[ \log \frac{d\Q^N}{d\PP} \bigg] 
= \E \bigg[ \frac{e^{-R_{s, N}(Y_1)}}{\ZZ_{s,N}} \log \bigg( \frac{e^{-R_{s, N}(Y_1)}}{\ZZ_{s,N}} \bigg)  \bigg] < \infty.
 \label{vform_5}
\end{align}

\noi
In particular, it follows from \eqref{vform_5} and \eqref{Fol1} that
\[\E_{\Q^N} \bigg[ \int_0^1 \| \wt{\dr}^N_t \|_{L^2_x}^2 dt \bigg] < \infty.\]

\noi
This implies that the stochastic integral 
$ \int_0^t \jb{\wt{\dr}^N_{t'},  dX^{\wt{\dr}^N}_{t'}}$ is a $\Q^N$-martingale.
Therefore, from~\eqref{vform_4}, we obtain 
\begin{equation*} 
- \log \ZZ_{s,N} =  \E_{\Q^N} 
\bigg[ R_{s, N}\big( Y_1^{\wt{\dr}^N} + I_1(\wt{\dr}^N)\big) 
+ \frac{1}{2} \int_0^1 \| \wt{\dr}^N_t \|_{L^2_x}^2dt \bigg].
\end{equation*}

\noi
This completes the proof of Proposition \ref{PROP:var}.
\end{proof}

\begin{remark} \label{REM:BD}
\rm
The material presented above differs from \cite{BG} in the following ways: 
(i)  we do not need to introduce a time-dependent cutoff in the definition of $\{Y_t\}$
and (ii)   we do not need to use the stronger Bou\'e-Dupuis formula \cite{BD}:
\begin{align*} 
- \log \ZZ_{s,N} = \inf_{\dr \in \Ha} \E \left[ R_{s, N}(Y_1 + I_1(\dr)) + \frac{1}{2} \int_0^1 \| \dr_t \|_{L^2}^2dt \right].
\end{align*} 

\noi
See \cite{Ust} or Theorem 2 in \cite{BG} for further discussion. 

\end{remark}

\subsection{Exponential integrability}
 \label{SUBSEC:const2}
In this subsection, 
we present the proof of Proposition~\ref{PROP:QFT}
by studying the minimization problem \eqref{var}
in  Proposition \ref{PROP:var}. 
In particular, we show that the infimum in \eqref{var}
is bounded away from $- \infty$, uniformly in $N \in \N$.
Our strategy  is to 
use pathwise stochastic bounds on $Y_1^\dr$, uniform in the drift $\dr$
and 
use pathwise deterministic bounds on $I_1(\dr)$ independently of the drift
 (see Lemmas \ref{LEM:Y} and \ref{LEM:dr}).

We first state two lemmas on
the pathwise regularity estimates on $Y_1^\dr$ and $I_1(\dr)$.

\begin{lemma}  \label{LEM:Y}
Let $2\leq p < \infty$.
Then, we have
\begin{equation}
\sup_{\dr \in \Hc} \E_{\Q^\dr} 
\Big[\| D^s Y_1^\dr \|_{\C^{-\frac 12-\eps}}^p +  \| :\!(D^s Y_1^\dr)^2\!: \|_{\C^{-1-\eps}}^p   \Big]  <\infty
\label{Z7}
\end{equation}

\noi
for any $\eps > 0$.
Here, colons  denote  Wick renormalization.

\end{lemma}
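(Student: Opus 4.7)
The plan is to exploit the fact that the Radon-Nikodym derivative defining $\Q^\dr$ in \eqref{Z2a} was designed so as to make the drifted process a Brownian motion. More precisely, by Girsanov's theorem the process $X_t^\dr$ appearing in the decomposition \eqref{Z3} is a cylindrical Brownian motion on $L^2(\T^3)$ under $\Q^\dr$. Applying the (deterministic) Fourier multiplier $\J^{-s-1}$ term by term as in \eqref{Z2}, the centered Gaussian process $Y_t^\dr = \J^{-s-1} X_t^\dr$ has, under $\Q^\dr$, the same law as $Y_t$ has under $\PP$. In particular,
\[
\Law_{\Q^\dr}(Y_1^\dr) \; = \; \Law_{\PP}(Y_1) \; = \; \nu_s
\]
for every admissible drift $\dr \in \Hc$.

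Both summands on the left-hand side of \eqref{Z7} are measurable functionals of $Y_1^\dr$ only (the Wick square $:\!(D^s Y_1^\dr)^2\!:$ being defined, as usual, as the limit in $\C^{-1-\eps}(\T^3)$ of $(D^s \pi_N Y_1^\dr)^2 - \s_N$, which exists by the same computation as in Proposition~\ref{PROP:random_distr} since it depends only on the law of $Y_1^\dr$). Consequently their $\Q^\dr$-expectations do not depend on $\dr$ at all, and equal the corresponding expectations under $u \sim \nu_s$:
\[
\E_{\Q^\dr}\!\big[\|D^s Y_1^\dr\|_{\C^{-\frac12-\eps}}^p\big] = \E_{\nu_s}\!\big[\|D^s u\|_{\C^{-\frac12-\eps}}^p\big], \quad \E_{\Q^\dr}\!\big[\|{:}\!(D^s Y_1^\dr)^2\!{:}\|_{\C^{-1-\eps}}^p\big] = \E_{\nu_s}\!\big[\|{:}\!(D^s u)^2\!{:}\|_{\C^{-1-\eps}}^p\big].
\]
The supremum over $\dr \in \Hc$ is thus trivial and the lemma reduces to finiteness of these two $\nu_s$-expectations.

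The second one is exactly the content of estimate \eqref{rd1} in Proposition~\ref{PROP:random_distr}, passed to the limit $N \to \infty$. For the first, observe that $D^s u$ lies in the first homogeneous Wiener chaos and, from the expansion \eqref{series4},
\[
\E_{\nu_s}\!\big[|\F(D^s u)(n)|^2\big] \; = \; \frac{|n|^{2s}}{|n|^2 + |n|^{2s+2}} \; \lesssim \; \jb{n}^{-2}
\]
for every $n \in \Z^3 \setminus \{0\}$. Applying Lemma~\ref{LEM:MWX}\,(i) with $d = 3$ and $s_0 = -\frac12$ then gives $D^s u \in L^p(\nu_s; \C^{-\frac12-\eps}(\T^3))$ for every $\eps > 0$ and every finite $p \geq 2$, which completes the proof. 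There is no genuine obstacle: the entire purpose of the variational/Girsanov setup in Proposition~\ref{PROP:var} is precisely to pay the cost of the drift in the $\frac12\int_0^1\|\dr_t\|_{L^2}^2\, dt$ term, so that the stochastic part $Y_1^\dr$ can be analyzed as an ordinary Gaussian field independently of $\dr$.
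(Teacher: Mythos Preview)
Your proposal is correct and follows essentially the same approach as the paper: both observe that Girsanov's theorem makes $\Law_{\Q^\dr}(Y_1^\dr) = \nu_s$ independent of $\dr$, so the supremum is vacuous, and then appeal to Proposition~\ref{PROP:random_distr} for the Wick square and to the H\"older--Besov regularity of $\nu_s$-samples for the linear term. Your version simply spells out the latter via Lemma~\ref{LEM:MWX}, which the paper leaves implicit.
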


\begin{proof}
Recall that
 $\{X_t^\dr\}$ under $\Q^\dr$ is a cylindrical Brownian motion in $L^2(\T^3)$
 for any $\dr \in \Hc$.
Thus, the supremum in \eqref{Z7}
is superfluous since the law of $Y_1^\dr = \J^{-s-1} X_1^\dr$ under $\Q^\dr$ is invariant under a change of drifts. 
In particular, we have
$\Law_{\Q^\ta}(Y_1^\ta) = \nu_s$. 
Then,~\eqref{Z7} follows
from the H\"older-Besov regularity of samples under $\nu_s$
and \eqref{rd1} in Proposition~\ref{PROP:random_distr}. 
\end{proof}

\begin{lemma}[Cameron-Martin drift regularity]  \label{LEM:dr} 
The drift term  $\dr \in \Hc$ has the  regularity  of the Cameron-Martin space $H^{s+1}(\T^3)$:
	\begin{equation}
	\| I_1(\dr) \|_{H^{s+1}}^2 \leq \int_0^1 \| \dr_t \|_{L^2}^2dt.
\label{Z8}
	\end{equation}
\end{lemma}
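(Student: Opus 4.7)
The strategy is to exploit the fact that the operator $\J^{-s-1}$ defined by~\eqref{Z2} is, by construction, the Cameron--Martin isometry associated with the Gaussian measure $\nu_s^{(1)}$; that is, $\J^{-s-1}$ is an isometry from $L^2(\T^3)$ onto the Cameron--Martin space of $\nu_s^{(1)}$ (which plays the role of ``$H^{s+1}$'' in the statement, taking into account the minor modification at the zeroth frequency built into~\eqref{lili}). Writing out the symbol, $\J^{-s-1}$ has Fourier symbol $1$ at $n=0$ and $(|n|^2+|n|^{2s+2})^{-1/2}$ at $n\neq 0$, so that for any $f\in L^2(\T^3)$,
\begin{align*}
\|\J^{-s-1}f\|_{H^{s+1}}^2
&= |\widehat{f}(0)|^2
+\sum_{n\neq 0}\frac{|n|^2+|n|^{2s+2}}{|n|^2+|n|^{2s+2}}|\widehat{f}(n)|^2
=\|f\|_{L^2}^2,
\end{align*}
where the norm $\|\cdot\|_{H^{s+1}}^2$ is interpreted exactly as in $H_s^{(1)}$ from~\eqref{lili}.

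With this identification in hand, the plan is to estimate $I_1(\dr)=\int_0^1\J^{-s-1}\dr_t\,dt$ on the Fourier side and then apply Cauchy--Schwarz in the time variable. For each $n\in\Z^3$, the integral commutes with the Fourier multiplier, so
\begin{equation*}
\widehat{I_1(\dr)}(n)
=\frac{1}{(|n|^2+|n|^{2s+2})^{1/2}}\int_0^1\widehat{\dr_t}(n)\,dt
\qquad (n\neq 0),
\end{equation*}
together with $\widehat{I_1(\dr)}(0)=\int_0^1\widehat{\dr_t}(0)\,dt$. Cauchy--Schwarz in $t\in[0,1]$ then yields
\begin{equation*}
\bigl(|n|^2+|n|^{2s+2}\bigr)\bigl|\widehat{I_1(\dr)}(n)\bigr|^2
\le \int_0^1 |\widehat{\dr_t}(n)|^2\,dt,
\end{equation*}
and analogously for $n=0$. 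Summing over $n\in\Z^3$ and using Fubini's theorem to exchange the sum and the $t$-integral gives
\begin{equation*}
\|I_1(\dr)\|_{H^{s+1}}^2
\le \int_0^1\sum_{n\in\Z^3}|\widehat{\dr_t}(n)|^2\,dt
=\int_0^1\|\dr_t\|_{L^2}^2\,dt,
\end{equation*}
which is the desired bound. Alternatively, one can derive the same inequality by combining Minkowski's integral inequality (to pull $\|\cdot\|_{H^{s+1}}$ inside the time integral, using that $\J^{-s-1}:L^2\to H^{s+1}$ is an isometry) with Cauchy--Schwarz in $t$.

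There is no real obstacle here: the only point that needs care is the verification that $\J^{-s-1}$ is a true isometry between $L^2(\T^3)$ and the Cameron--Martin space induced by $H_s^{(1)}$, in particular handling the zeroth Fourier coefficient correctly (recall the modification at $n=0$ explained in Remark~\ref{REM:zero_freq}). Once this identification is made, the lemma reduces to a one-line application of Cauchy--Schwarz.
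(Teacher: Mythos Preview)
Your proposal is correct and essentially matches the paper's approach. The paper's proof is the one-liner you list as the ``alternative'': it uses the isometry $\J^{-s-1}:L^2\to H^{s+1}$ to write $\|I_1(\dr)\|_{H^{s+1}}=\big\|\int_0^1\dr_t\,dt\big\|_{L^2}$, then applies Minkowski's integral inequality followed by Cauchy--Schwarz in $t$. Your frequency-by-frequency Cauchy--Schwarz argument is an equivalent reformulation, and your explicit treatment of the zeroth Fourier mode and the modified $H^{s+1}$-norm from $H_s^{(1)}$ is in fact more careful than the paper's own presentation.
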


\begin{proof}
This is immediate from  Minkowski's integral inequality followed by Cauchy-Schwarz
inequality:
\begin{equation*}
\| I_1(\dr) \|_{H^{s+1}}= \bigg\| \int_0^1 \dr_t dt \bigg\|_{L^2} \leq \int_0^1 \| \dr_t \|_{L^2} dt \leq \left( \int_0^1 \| \dr_t \|_{L^2}^2 dt \right)^\frac{1}{2},
\end{equation*}

\noi
yielding \eqref{Z8}.
\end{proof}

We now present the proof of Proposition \ref{PROP:QFT},
 using Proposition \ref{PROP:var}. 
Fixing an arbitrary drift $\dr \in \Hc$, the quantity that we wish to bound from below is
\begin{equation}
\W_N(\dr) = \E_{\Q^\dr} 
\bigg[ R_{s, N}(Y_1^\dr + I_1(\dr)) + \frac{1}{2} \int_0^1 \| \dr_t \|_{L^2_x}^2 dt \bigg].	
\label{v_N0}
\end{equation}

\noi
Since the drift $\dr \in \Hc$ is fixed, 
we suppress the dependence on the drift $\dr$ henceforth 
and denote $Y = Y_1^\dr $ and $\Dr = I_1(\dr)$. 
From the definition \eqref{K1} of $R_{s, N}$, 
we have
\begin{align}
R_{s, N}(Y + \Dr)  & = 
\frac 32 \int_{\T^3} 
 : \!(D^s Y)^2 \!: (Y+\Dr)^2 + 2D^s Y D^s \Dr ( Y+\Dr)^2 + (D^s \Dr)^2 (Y+\Dr)^2 \notag \\
&\hphantom{X}
+ \frac 14 \int_{\T^3} (Y+\Dr)^4 . 
 \label{v_N}
\end{align}

\noi
The main strategy is to bound $\W_N(\dr)$ from below pathwise 
and independently of the drift by utilizing the positive terms:
\begin{equation}
\U_N(\dr) = \frac 32 \int (D^s \Dr)^2 \Dr^2 + \frac 14 \int \Dr^4 + \frac{1}{2} \int_0^1 \| \dr_t \|_{L^2_x}^2 dt .
\label{v_N1}
\end{equation}

\noi
In the following, we  state three lemmas, 
controlling the other terms appearing in~\eqref{v_N}.
The proofs of these lemmas follow from lengthy but straightforward computations
and are presented at the end of this section.
The first lemma handles the 
terms quadratic in $D^s Y$.

\begin{lemma}[Terms quadratic in $D^s Y$] \label{LEM:quad} 
Let $s > \frac 32$.
Then, given  $\dl>0$ sufficiently small, there exist
small $\eps > 0$ and  $c(\dl)>0$ such that
\begin{align}
\int_{\T^3} :\!(D^sY)^2\!: Y^2 
& \les \| :\!(D^s Y)^2\!: \|_{\C^{-1-\eps}}^2 + \| D^s Y \|_{\C^{-\frac 12 - \eps}}^4,
\label{quad_1}
 \\
\int_{\T^3} :\! (D^sY)^2\!: Y\Dr 
& \leq c(\dl) \Big( \| :\!(D^s Y)^2\!: \|_{\C^{-1-\eps}}^4 + \| D^s Y \|_{\C^{-\frac 12 - \eps}}^4 \Big) 
+ \dl \| \Dr \|_{H^{s+1}}^2,
\label{quad_2} \\
\int_{\T^3} :\!(D^sY)^2\!: \Dr^2 
& \leq c(\dl) \| :\!(D^s Y)^2\!: \|_{\C^{-1-\eps}}^4 + \dl \Big( \| \Dr \|_{H^{s+1}}^2 + \| \Dr \|_{L^4}^4 \Big).
\label{quad_3}
\end{align}

\end{lemma}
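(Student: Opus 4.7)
The three estimates share a common template: dualize the singular stochastic factor $:\!(D^sY)^2\!:$ against the accompanying product via the pairing \eqref{dual} between $\C^{-1-\eps}$ and $B^{1+\eps}_{1,1}$, resolve the product with the fractional Leibniz rule \eqref{prod} combined with the Besov embeddings \eqref{embed}--\eqref{emb_b} and algebra \eqref{alge}, and then trade regularity of $Y$ for that of $D^sY$ via the smoothing $\|Y\|_{\C^{\sigma+s}} \les \|D^sY\|_{\C^\sigma} + |\ft Y(0)|$ (the last term being a standard Gaussian under $\Q^\dr$ with finite moments, which is absorbed harmlessly into the $c(\dl)$-constants once expectations are taken). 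For the drift factor $\Dr = I_1(\dr)$ we invoke only Lemma \ref{LEM:dr}, i.e.\ $\|\Dr\|_{H^{s+1}} \leq \big(\int_0^1 \|\dr_t\|_{L^2}^2\,dt\big)^{1/2}$, and, in the last estimate, the coercive $L^4$-norm. Young's inequality then splits the resulting product into the stated form.

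For \eqref{quad_1}, these steps give $\|Y^2\|_{B^{1+\eps}_{1,1}} \les \|Y\|_{\C^{1+\eps'}}^2 \les \|D^sY\|_{\C^{-1/2-\eps}}^2$, using $s > 3/2$ so that $1+\eps' < s-1/2-\eps$ for $\eps, \eps'$ small; the elementary $ab \leq (a^2+b^2)/2$ then finishes the job. For \eqref{quad_2}, the fractional Leibniz yields $\|Y\Dr\|_{B^{1+\eps}_{1,1}} \les \|D^sY\|_{\C^{-1/2-\eps}}\|\Dr\|_{H^{s+1}}$, and the weighted Young $abc \leq c(\dl)(ab)^2 + \dl c^2$ followed by $(ab)^2 \leq (a^4+b^4)/2$ gives the stated bound.

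For \eqref{quad_3}, the fractional Leibniz gives $\|\Dr^2\|_{B^{1+\eps}_{1,1}} \les \|\Dr\|_{H^{1+2\eps}}\|\Dr\|_{L^2}$, which we interpolate via \eqref{interp} as $\|\Dr\|_{H^{1+2\eps}} \les \|\Dr\|_{H^{s+1}}^{\alpha}\|\Dr\|_{L^2}^{1-\alpha}$ with $\alpha = (1+2\eps)/(s+1)$ arbitrarily small; the factor $\|\Dr\|_{L^2}^{2-\alpha}$ is then bounded by $\|\Dr\|_{L^4}^{2-\alpha}$ on the torus. A three-term Young inequality with exponents $p_2 = 2/\alpha$ on the $\|\Dr\|_{H^{s+1}}$ factor and $p_3 = 4/(2-\alpha)$ on the $\|\Dr\|_{L^4}$ factor produces exactly the coercive powers $\|\Dr\|_{H^{s+1}}^2$ and $\|\Dr\|_{L^4}^4$, with dual exponent $p_1 = 4/(2-\alpha) < \infty$ on the stochastic factor $\|:\!(D^sY)^2\!:\|_{\C^{-1-\eps}}$, which is then bounded above by $\|:\!(D^sY)^2\!:\|_{\C^{-1-\eps}}^4 + 1$.

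The main obstacle is the final Young-inequality balancing in \eqref{quad_3}: the three drift factors must be assembled into exactly the two quantities $\|\Dr\|_{H^{s+1}}^2$ and $\|\Dr\|_{L^4}^4$ that appear in the coercive term $\U_N(\dr)$ of \eqref{v_N1}, and this dictates the specific choice of Young exponents. Everything else---the zero Fourier mode of $Y$, admissibility of the Besov embeddings in the regime $s > 3/2$, and the usual truncation bookkeeping---is routine.
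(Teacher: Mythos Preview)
Your argument is correct and follows the same template as the paper: dualize via \eqref{dual} into $\C^{-1-\eps}\times B^{1+\eps}_{1,1}$ (the paper uses the equivalent pairing $B^{-1-2\eps}_{\infty,2}\times B^{1+2\eps}_{1,2}$), apply the fractional Leibniz rule \eqref{prod}, and finish with Young's inequality. Your treatment of \eqref{quad_1} and \eqref{quad_2} is essentially identical to the paper's, and your remark on the zeroth Fourier mode of $Y$ matches how the paper handles it (implicitly, since in the application only $\E_{\Q^\dr}$-moments are needed and $\ft Y(0)=B^0_1$ has finite moments uniformly in $\dr$).

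For \eqref{quad_3} you take a slightly longer route than necessary. After reaching
\[
\int_{\T^3} :\!(D^sY)^2\!:\Dr^2 \;\les\; \|:\!(D^sY)^2\!:\|_{\C^{-1-\eps}}\,\|\Dr\|_{H^{1+2\eps}}\,\|\Dr\|_{L^2},
\]
you interpolate $\|\Dr\|_{H^{1+2\eps}}$ between $H^{s+1}$ and $L^2$, obtaining a Young exponent $p_1=4/(2-\alpha)<4$ on the Wick factor and hence an extraneous additive constant via $x^{p_1}\le x^4+1$. The paper instead observes directly that $\|\Dr\|_{H^{1+2\eps}}\les\|\Dr\|_{H^{s+1}}$ (since $1+2\eps<s+1$) and $\|\Dr\|_{L^2}\les\|\Dr\|_{L^4}$, after which Young's inequality with the exact exponents $(4,2,4)$ yields the stated bound on the nose. (Also, $\alpha=(1+2\eps)/(s+1)$ is not ``arbitrarily small'' but a fixed number in $(0,1)$; your Young balancing works for any such $\alpha$, so this slip is harmless.)
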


The next lemma handles  the 
terms linear in $D^s Y$.

\begin{lemma}[Terms linear in $D^s Y$] \label{LEM:lin} 
Let $s >  1$.
Then, given  $\dl > 0$ sufficiently small, 
there exist small $\eps > 0$,  $c(\dl) > 0$,  
and $p_j = p_j(\eps, s) > 1$, $j = 1,2$,  such that
\begin{align}
\int_{\T^3} D^sY D^s\Dr Y^2 
& \leq c(\dl)  \| D^s Y \|_{\C^{-\frac 12-\eps}}^6 + \dl \| \Dr \|_{H^{s+1}}^2, 
\label{lin_1}
 \\
\label{lin_2}
\int_{\T^3} D^sY D^s\Dr Y\Dr 
& \leq c(\dl) 
\Big( 1 + \| D^s Y \|_{\C^{-\frac 12-\eps}}  \Big)^{p_1}  
+\dl \Big( \| \Dr \|_{H^{s+1}}^2 + \| \Dr \|_{L^4}^4 \Big), 
 \\
\begin{split}
\int_{\T^3} D^sY D^s\Dr \Dr^2 
&\leq c(\dl) \Big(1+  \| D^s Y \|_{\C^{-\frac12-\eps}}\Big)^{p_2}  \\
&\hphantom{X}
+ \dl \Big( \| \Dr \|_{H^{s+1}}^2 + \| \Dr \|_{L^4}^4 
+ \| D^s \Dr \Dr \|_{L^2}^2\Big).
\end{split}
\label{lin_3}
\end{align}

\end{lemma}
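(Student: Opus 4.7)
The unifying strategy for the three bounds is to: (i) isolate the rough factor $D^sY \in \C^{-\frac12-\eps}$ via the duality pairing \eqref{dual}; (ii) control the remaining product by the fractional Leibniz rule \eqref{prod}, the algebra property \eqref{alge}, and Besov embeddings \eqref{emb_b}; and (iii) conclude via Young's inequality, so that every resulting $\Dr$-norm is pre-multiplied by $\dl$ while the $Y$-norms absorb a large constant $c(\dl)$. The key observation allowing every $Y$-norm to be recorded as $\|D^sY\|_{\C^{-\frac12-\eps}}$ is the Fourier-side identity $\widehat Y(n)=(|n|^2+|n|^{2s+2})^{-1/2}\widehat X(n)$ for $n\ne 0$, which gives the norm equivalence $\|Y\|_{\C^{s-\frac12-\eps}}\les \|D^sY\|_{\C^{-\frac12-\eps}} + |B_1^0|$; the zero-mode contribution has moments of all orders under $\Q^\dr$ by Lemma \ref{LEM:Y}, and the algebra property then yields $\|Y^k\|_{\C^{s-\frac12-\eps}}\les \|D^sY\|_{\C^{-\frac12-\eps}}^k + 1$ for any $k\in \N$.

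For \eqref{lin_1}, I would pair $D^sY$ against $D^s\Dr\cdot Y^2$ through \eqref{dual}; then by \eqref{prod} place $D^s\Dr$ in a Besov space controlled by $\|\Dr\|_{H^{s+1}}$ (using that $s\ge 1$ provides a generous derivative surplus) and $Y^2$ in $L^2$, controlled by $\|D^sY\|_{\C^{-\frac12-\eps}}^2$ up to the zero-mode constant. This produces the intermediate estimate $\les \|D^sY\|_{\C^{-\frac12-\eps}}^3\|\Dr\|_{H^{s+1}}$, and Young's inequality $ab\le \dl a^2+c(\dl)b^2$ with $a=\|\Dr\|_{H^{s+1}}$ and $b=\|D^sY\|_{\C^{-\frac12-\eps}}^3$ yields the exponent $6$ in \eqref{lin_1}. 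For \eqref{lin_2}, the analogous duality/Leibniz step produces an intermediate bound of the shape $\les \|D^sY\|_{\C^{-\frac12-\eps}}^2\, \|\Dr\|_{H^{s+1}}^{\al_1}\|\Dr\|_{L^4}^{\al_2}$ for some $\al_1,\al_2>0$ depending on how the extra $\tfrac12+\eps$ derivatives are distributed by \eqref{prod}; Young's inequality then absorbs both $\Dr$-norms into $\dl(\|\Dr\|_{H^{s+1}}^2 + \|\Dr\|_{L^4}^4)$ at the cost of a single polynomial power $p_1=p_1(\eps,s)>1$ of $\|D^sY\|_{\C^{-\frac12-\eps}}$.

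For \eqref{lin_3}, the two $\Dr$-factors force us to group one of them with $D^s\Dr$ and measure the resulting product in $L^2$: this is precisely the reason $\|D^s\Dr\cdot\Dr\|_{L^2}^2$ must appear explicitly on the right-hand side. After duality against $D^sY$, the residual factor $D^s\Dr\cdot\Dr^2$ is decomposed via \eqref{prod} and \eqref{emb_b} so that one copy of $\Dr$ is paired with $D^s\Dr$ in $L^2$ while the remaining copy is controlled in $L^4$; the derivative surplus needed to place the factor in $\B^{\frac12+\eps}_{1,1}$ comes from the high regularity of $\Dr\in H^{s+1}$. Young's inequality with three exponents adapted to $\|D^s\Dr\cdot\Dr\|_{L^2}^2$, $\|\Dr\|_{L^4}^4$, and $\|\Dr\|_{H^{s+1}}^2$ then yields the stated bound with some power $p_2=p_2(\eps,s)$. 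The main obstacle is the careful bookkeeping of the Besov exponents in \eqref{prod}: every intermediate factor must be expressible either as $\|\Dr\|_{H^{s+1}}$, $\|\Dr\|_{L^4}$, $\|D^s\Dr\cdot\Dr\|_{L^2}$, or a power of $\|D^sY\|_{\C^{-\frac12-\eps}}$. Once this matching is arranged, which uses in an essential way the surplus derivatives provided by $s>1$, the final Young step is routine.
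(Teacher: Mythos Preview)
Your proposal is correct and follows essentially the same approach as the paper: duality \eqref{dual} against $D^sY$, fractional Leibniz \eqref{prod} on the remaining product, interpolation \eqref{interp} to express intermediate $\Dr$-norms as $\|\Dr\|_{H^{s+1}}^\gamma\|\Dr\|_{L^4}^{1-\gamma}$, and Young's inequality to close. The one place that deserves more than a sentence is the second Leibniz term in \eqref{lin_3} (what the paper calls $T_4$), where $\|D^s\Dr\cdot\Dr\|_{\B^{\frac12+2\eps}_{2,\infty}}$ must itself be expanded by \eqref{prod} and \eqref{interp}; the paper carries out this step with explicit interpolation exponents $\gamma_2=\tfrac12+2\eps$ and $\gamma_3=\tfrac{3+2\eps}{2(s+1)}$ and verifies $\tfrac{\gamma_2(1+\gamma_3)}{2}+\tfrac{1-\gamma_2}{2}+\tfrac{1+\gamma_2(1-\gamma_3)}{4}<1$, which is exactly the ``careful bookkeeping'' you allude to and is where the condition $s>1$ (in fact $s>\tfrac12$) is used.
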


Lastly, the third lemma controls 
the term quadratic in $D^s \Dr$.

\begin{lemma}[Term quadratic in $D^s \Dr$] \label{LEM:quadV}
Let $s > 1$.
Then, given $\dl > 0$,  there exist
small $\eps > 0$,  $c(\dl) > 0$,  
and $p = p(s, \eps) > 1$ 
such that
\begin{align}
\int_{\T^3} (D^s \Dr)^2 Y\Dr 
&\leq c(\dl) \| D^s Y \|_{\C^{-\frac 12-\eps}}^{p} 
+ \dl \Big( \| \Dr \|_{H^{s+1}}^2 + \| \Dr \|_{L^4}^4 +\| D^s \Dr \Dr \|_{L^2}^2 \Big).
\label{quadV}
\end{align}
\end{lemma}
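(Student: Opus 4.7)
I would begin by pairing factors in the integrand via Cauchy--Schwarz so that the quantity $\|D^s\Dr \cdot \Dr\|_{L^2}$ appearing on the right-hand side is directly produced:
\begin{equation*}
\bigg| \int_{\T^3} (D^s\Dr)^2 Y \Dr \bigg|
= \bigg| \int_{\T^3} (D^s\Dr \cdot \Dr)(D^s\Dr \cdot Y) \bigg|
\le \|D^s\Dr \cdot \Dr\|_{L^2}\,\|D^s\Dr \cdot Y\|_{L^2}.
\end{equation*}
The essential observation is that for $s > 1$ we may fix $\eps > 0$ so small that $s - \tfrac12 - \eps > \tfrac12$; then the Besov embedding gives $\C^{s-\frac12-\eps}(\T^3) \hookrightarrow L^\infty(\T^3)$, and since $D^s$ annihilates only the zero Fourier mode, we have $\|Y\|_{L^\infty} \les \|D^s Y\|_{\C^{-\frac12-\eps}}$ modulo an additive contribution from $\widehat Y(0)$ which, being a single Gaussian random variable with finite moments, can be absorbed into $c(\dl)$. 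Consequently
\begin{equation*}
\|D^s\Dr \cdot Y\|_{L^2} \le \|Y\|_{L^\infty}\|D^s\Dr\|_{L^2} \les \|D^s Y\|_{\C^{-\frac12-\eps}}\,\|\Dr\|_{H^s}.
\end{equation*}

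Next I would apply Young's inequality $ab \le \dl a^2 + c(\dl) b^2$ in order to peel off the $\|D^s\Dr \cdot \Dr\|_{L^2}^2$ term, leaving
\begin{equation*}
\bigg| \int_{\T^3} (D^s\Dr)^2 Y \Dr \bigg|
\le \dl\, \|D^s\Dr \cdot \Dr\|_{L^2}^2 + c(\dl)\,\|D^s Y\|_{\C^{-\frac12-\eps}}^2 \|\Dr\|_{H^s}^2.
\end{equation*}
The remaining task is to distribute $\|D^s Y\|_{\C^{-\frac12-\eps}}^2\|\Dr\|_{H^s}^2$ among a single high (but finite) power of $\|D^s Y\|_{\C^{-\frac12-\eps}}$ with a large constant, and the two quantities $\dl\|\Dr\|_{H^{s+1}}^2$ and $\dl\|\Dr\|_{L^4}^4$ with small coefficient. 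For this I would interpolate
\begin{equation*}
\|\Dr\|_{H^s}^2 \les \|\Dr\|_{H^{s+1}}^{2s/(s+1)}\,\|\Dr\|_{L^2}^{2/(s+1)},
\end{equation*}
use $\|\Dr\|_{L^2} \le C \|\Dr\|_{L^4}$ on $\T^3$, and then invoke Young's inequality twice: first with conjugate exponents $\tfrac{s+1}{s}, s+1$ to split off $\dl\|\Dr\|_{H^{s+1}}^2$, and subsequently with exponents $2, 2$ to split off $\dl\|\Dr\|_{L^4}^4$. This produces the desired inequality with the explicit choice $p = 4(s+1)$.

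This argument is essentially deterministic, since no singular Wick-renormalized distribution appears in the integrand (in contrast with Lemma \ref{LEM:quad}, where $:\!(D^sY)^2\!:$ had to be handled in $\C^{-1-\eps}$). The only real technical obstacle is selecting the correct bilinear pairing at the Cauchy--Schwarz step --- this choice is forced by the presence of the nontrivial term $\|D^s\Dr \cdot \Dr\|_{L^2}^2$ on the right-hand side of the statement --- and then carrying out the bookkeeping in Young's inequality so that all three $\Dr$-dependent norms carry a small weight while $\|D^s Y\|_{\C^{-\frac12-\eps}}$ is concentrated into one factor with large but polynomial cost.
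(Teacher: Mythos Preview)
Your approach is essentially the paper's. Both arguments first split off $\dl\|D^s\Dr\cdot\Dr\|_{L^2}^2$ via Cauchy's inequality: the paper does this pointwise as $(D^s\Dr)^2 Y\Dr \le c(\dl)(D^s\Dr)^2 Y^2 + \dl (D^s\Dr\,\Dr)^2$, while you apply Cauchy--Schwarz on $L^2$ and then Young --- these are the same reduction to bounding $\int (D^s\Dr)^2 Y^2$. For that term the paper uses an $L^3$-based H\"older/Sobolev splitting ($\|D^s\Dr\|_{L^3}^2\|Y^2\|_{L^3}\les\|\Dr\|_{H^{s+1/2}}^2\|Y^2\|_{H^{1/2}}$), whereas you pull $Y$ out in $L^\infty$ and keep $D^s\Dr$ in $L^2$; both feed into the same interpolation-plus-Young endgame, and your route is if anything more direct.

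One caveat: your handling of the zero mode is not quite right as written. The constant $c(\dl)$ in the lemma is deterministic, so you cannot ``absorb'' the random quantity $\widehat Y(0)$ into it; the lemma is a pathwise estimate. The paper's own proof has the same gap, silently, when it passes from $\|Y^2\|_{\C^{\frac12+\eps}}$ to $\|D^s Y\|_{\C^{-\frac12-\eps}}^2$. The clean fix in either proof is to state the bound with $\|Y\|_{\C^{s-\frac12-\eps}}$ (which genuinely controls the zero frequency) in place of $\|D^s Y\|_{\C^{-\frac12-\eps}}$; this is harmless downstream since Lemma~\ref{LEM:Y} is proved via the full H\"older--Besov regularity of samples under $\nu_s$.
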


The regularity restriction $s > \frac 32$ appears 
in controlling the terms quadratic in $D^s Y$.
We now prove Proposition \ref{PROP:QFT}, 
assuming Lemmas \ref{LEM:quad}, \ref{LEM:lin}, and \ref{LEM:quadV}.

First, note that the remaining terms left to treat in $\eqref{v_N}$
 are harmless. 
 The terms
$\int_{\T^3} (D^s \Dr)^2 Y^2$, 
$\int_{\T^3} Y^4$, and $\int_{\T^3} Y^2 \Dr^2$  are positive and thus can be discarded. 
The remaining two terms can be controlled by 
Young's inequality:
\begin{align*}
\int_{\T^3} Y^3 \Dr  +  \int_{\T^3} Y \Dr^3  \leq c(\dl) \| Y \|_{L^4}^{4} + \dl \| \Dr \|_{L^4}^4
\end{align*}

\noi
for any $\dl > 0$.
We now apply
 the regularity estimates of Lemmas \ref{LEM:Y} and \ref{LEM:dr} to the bounds obtained in 
 Lemmas \ref{LEM:quad}, \ref{LEM:lin}, and \ref{LEM:quadV}, and the bounds on the harmless terms.
 Then, from \eqref{v_N0}, \eqref{v_N}, and \eqref{v_N1}, 
 we  conclude that, by choosing $\dl > 0$ sufficiently small, there exists finite $C = C(\dl)>0$ such that
\begin{align*}
\sup_{N \in \mathbb{N}} \sup_{\dr \in \Hc} \W_N(\dr) 
\geq 
\sup_{N \in \mathbb{N}} \sup_{\dr \in \Hc}
\Big\{ -C(\dl) + \frac{1}{4}\U_N(\dr)\Big\}
 \geq - C(\dl)
>-\infty.
\end{align*}

\noi
Therefore, 
 by Proposition \ref{PROP:var}, this proves Proposition \ref{PROP:QFT}
 (when $p = 1$).

\medskip

In the remaining part of this section, 
we present the proofs of 
 Lemmas \ref{LEM:quad}, \ref{LEM:lin}, and~\ref{LEM:quadV}.

\begin{proof}[Proof of Lemma \ref{LEM:quad}]
	
By duality $\eqref{dual}$ and the algebra property $\eqref{alge}$, we have
\begin{equation*}
\text{LHS of } \eqref{quad_1}
\leq  \| :\!(D^s Y)^2\!: \|_{\B^{-1-2\eps}_{1,1}} \| Y \|_{\C^{1+2\eps}}^2.
\end{equation*}

\noi
Then, by choosing $\eps > 0$ sufficiently small, 
\eqref{quad_1} follows from the trivial embeddings \eqref{embed}
and  Cauchy's inequality, provided that $s > \frac 32$.
	
By  duality $\eqref{dual}$ and the fractional Leibniz rule  $\eqref{prod}$, 
we have
\begin{align*} 
\text{LHS of } \eqref{quad_2}
&\les \| :\!(D^sY)^2\!: \|_{\B^{-1 - 2 \eps}_{\infty,2}}
\| Y\Dr \|_{\B^{1+2 \eps}_{1,2}} \\ 
&\les \| :\!(D^s Y)^2 \!: \|_{\C^{-1-\eps}} 
\Big( \| Y \|_{\B^{1+2 \eps}_{2,2}}\| \Dr \|_{L^2} + \| Y \|_{L^2}\| \Dr \|_{\B^{1+2\eps}_{2,2}} \Big).
\end{align*}

\noi
Then, by choosing $\eps > 0$ sufficiently small, \eqref{quad_2} follows from 
 \eqref{embed} 
and Young's inequality,
 provided that $s > \frac 32$.
	
Lastly, proceeding as above with \eqref{dual}
and  \eqref{prod}, we have
\begin{align*} 
\text{LHS of } \eqref{quad_3}
\les \| :\!(D^s Y)^2\!: \|_{\B^{-1-2\eps}_{\infty,2}}
\| \Dr \|_{\B^{1+2\eps }_{2,2}} \| \Dr \|_{L^2}.
\end{align*}

\noi
Then,  $\eqref{quad_3}$ follows from  \eqref{embed}, $L^4 (\T^3)  \hra L^2(\T^3)$, and Young's inequality. 		
\end{proof}

Next, we present the proof of  Lemma \ref{LEM:lin}.
The main idea
is to use
(i) $ \| \Dr \|_{H^{s+1}}$ for controlling derivatives on $\Dr$
and  (ii) $\| \Dr \|_{L^4}$ 
and  $\| D^s \Dr \Dr \|_{L^2}$ 
for controlling homogeneity of~$\Dr$.

\begin{proof}[Proof of Lemma \ref{LEM:lin}]

By duality \eqref{dual} and the fractional Leibniz rule \eqref{prod} with \eqref{embed}, 
we have
\begin{align*} 
\text{LHS of } \eqref{lin_1}
 &\les \| D^s Y \|_{\B^{-\frac12-2\eps}_{\infty, 2}} \| D^s \Dr Y^2 \|_{\B^{\frac12+2\eps}_{1,2}} \\
& \les  \| D^s Y \|_{\C^{-\frac12-\eps}} 
\Big( \| Y^2 \|_{\B^{\frac 12+2 \eps}_{2,2}} \| D^s \Dr \|_{L^2} 
+ \| Y^2 \|_{L^2} \| D^s \Dr \|_{B^{\frac 12+2\eps}_{2,2}} \Big)\\
& \leq  \| D^s Y \|_{\C^{-\frac 12-\eps}} \| Y \|_{\C^{\frac 12+3\eps}}^2 \| \Dr \|_{H^{s+1}}.
\end{align*}

\noi
Then, by choosing $\eps > 0$ sufficiently small, 
\eqref{lin_1} follows from Cauchy's inequality, 
provided that $s > 1$.

By duality \eqref{dual} and the fractional Leibniz rule \eqref{prod} with \eqref{embed}
and \eqref{alge}, 
we have
\begin{align*}
\text{LHS of } \eqref{lin_2}
&\les \| D^s Y \|_{\B^{-\frac 12-2\eps}_{\infty,2}} \| D^s \Dr Y \Dr \|_{\B^{\frac 12+2\eps}_{1,2}} \\
& \les \| D^s Y \|_{\C^{-\frac 12-\eps}} 
\Big( \| Y\Dr\|_{\B^{\frac 12+2\eps}_{2,2}} \| D^s \Dr \|_{L^2} 
+ \| Y\Dr \|_{L^2} \| D^s \Dr \|_{\B^{\frac 12+2\eps}_{2,2}} \Big) \\
&
=:  T_1 + T_2.
\end{align*}

\noi
By  H\"older's inequality and \eqref{embed}, we have
\begin{align} 
\begin{split}
T_2 
& \les \| D^s Y \|_{\C^{-\frac 12-\eps}} \| Y \|_{L^4} \| \Dr \|_{H^{s+1}} \| \Dr \|_{L^4}  \\
& \les \| D^s Y \|_{\C^{-\frac 12-\eps}}^2 \| \Dr \|_{H^{s+1}} \| \Dr \|_{L^4}
\end{split}
\label{T2}
\end{align}

\noi
for $s > \frac 12$ and small $\eps > 0$.

By \eqref{prod}, \eqref{embed},  and the interpolation \eqref{interp},
we have
\begin{align*}
\| Y\Dr \|_{\B^{\frac 12+2 \eps}_{2,2}} 
& \les
 \| Y \|_{\B^{\frac 12+2\eps}_{\infty,2}} \| \Dr \|_{L^2}
+  \| Y \|_{L^\infty} \| \Dr \|_{\B^{\frac 12+2\eps}_{2,2}} \\
& \les \| Y \|_{\C^{\frac 12 + 3\eps} }\| \Dr \|_{H^{\frac 12+2\eps}}\\
& \les \| Y \|_{\C^{\frac 12 + 3\eps} }\| \Dr \|_{H^{s+1}}^\g \|\Dr\|_{L^2}^{1-\g}
\end{align*}

\noi
for some $\g = \g(s, \eps) \in (0, 1)$.
Thus, we have
\begin{align} 
\begin{split}
T_1
& \les \| D^s Y \|_{\C^{-\frac 12-\eps}}^2 
\| \Dr \|_{H^{s+1}}^{1+\g} \|\Dr\|_{L^4}^{1-\g}
\end{split}
\label{T1}
\end{align}

\noi
for $s > 1$ and small $\eps > 0$.
Hence, noting that $\frac 12 + \frac 14 < 1$ and $\frac{1+\g}{2} + \frac{1 - \g}{4} < 1$
for $\g \in (0, 1)$, 
the desired estimate \eqref{lin_2} follows
from applying Young's inequality to \eqref{T2} and \eqref{T1}.

Finally, we consider $\eqref{lin_3}$. 
By  \eqref{dual} and \eqref{prod}
with \eqref{embed}, 
we have
\begin{align*}
\text{LHS of } \eqref{lin_3} 
 &\les \| D^s Y \|_{\B^{-\frac12-2\eps}_{\infty, 1}} \| D^s \Dr \Dr^2 \|_{\B^{\frac 12 +2\eps}_{1,\infty}} \\
& \les \| D^s Y \|_{\C^{-\frac12-\eps}} 
\Big( \| D^s \Dr \Dr \|_{L^2} \| \Dr \|_{\B^{\frac 12+2\eps}_{2,\infty}} 
+ \| D^s\Dr \Dr\|_{\B^{\frac12+2\eps}_{2, \infty}} \| \Dr \|_{L^2} \Big) \\
& =: T_3 + T_4.
\end{align*}

\noi
By the interpolation \eqref{interp} with $L^4 (\T^3) \hra L^2(\T^3)$, 
there exists $\g_1  = \g_1(s , \eps) \in (0, 1)$ such that
\begin{align*}
T_3 
& \les \| D^s Y \|_{\C^{-\frac 12-\eps}} 
\| D^s \Dr \Dr \|_{L^2} \| \Dr \|_{H^{s+1}}^{\g_1} \| \Dr \|_{L^4}^{1-\g_1}.
\end{align*}

\noi
Noting that
$\frac 12 + \frac{\g_1}{2} + \frac {1-\g_1}{4} < 1$, 
we can apply Young's inequality to 
bound the contribution from $T_3$ by the right-hand side of \eqref{lin_3}.

It remains to estimate $T_4$.
By the interpolation \eqref{interp} and \eqref{prod}, 
we have 
\begin{align}
\begin{split}
\| D^s \Dr \Dr \|_{H^{\frac 12+2\eps}}\| \Dr \|_{L^2} 
 &\les \| D^s \Dr \Dr \|_{H^{1}}^{\g_2} \| D^s \Dr \Dr \|_{L^2}^{1-\g_2} \| \Dr \|_{L^2} \\
& \les \Big( \| D^s \Dr \|_{\B^{1}_{2,2}} \| \Dr \|_{L^\infty} 
+ \| D^s \Dr \|_{L^6} \| \Dr \|_{\B^{1}_{3, 2}} \Big)^{\g_2}  \\ 
&\hphantom{X}
 \times \| D^s \Dr \Dr \|_{L^2}^{1-\g_2} \| \Dr \|_{L^4},
\end{split}
 \label{lin3_t1}
\end{align}

\noi
where $\g_2 = \g_2(\eps) \in(0, 1)$ is given by 
\begin{align}
\g_2 = \frac 12 + 2\eps.
\label{g2}
\end{align}

\noi
By Sobolev's inequality
and the interpolation \eqref{interp} (with $s > \frac 12$), 
we have
\begin{equation}
 \| D^s \Dr \|_{\B^{1}_{2,2}} \| \Dr \|_{L^\infty} 
+ \| D^s \Dr \|_{L^6} \| \Dr \|_{\B^{1}_{3, 2}} 
 \les \| \Dr \|_{H^{s+1}} \| \Dr \|_{H^{\frac 32 + \eps}} 
 \les \| \Dr \|_{H^{s+1}}^{1 +\g_3} \| \Dr \|_{L^4}^{1-\g_3},
 \label{lin3_t1a}
\end{equation}

\noi
where $\g_3 = \g_3(s, \eps) \in(0, 1)$ is given by 
\begin{align}
\g_3 = \frac{3 + 2\eps}{2(s+1)}.
\label{g3}
\end{align}

\noi
Combining \eqref{lin3_t1} and \eqref{lin3_t1a}, we obtain
\begin{equation*} 
T_4 \les \| D^s Y \|_{\C^{-\frac 12-\eps}} 
\| \Dr \|_{H^{s+1}}^{ \g_2(1+\g_3)} 
\| D^s \Dr \Dr \|_{L^2}^{1-\g_2} \| \Dr \|_{L^4}^{1 +\g_2(1-\g_3)}.
\end{equation*}

\noi
From \eqref{g2} and \eqref{g3}, we observe that 
\[\frac{ \g_2(1+\g_3)}{2}
+ \frac{1-\g_2}{2} 
+ \frac{1 +\g_2(1-\g_3)}{4} < 1, \]

\noi
provided that $s > \frac 12$ and $\eps > 0$ is sufficiently small.
Therefore, 
we can apply Young's inequality to 
bound the contribution from $T_4$ by the right-hand side of \eqref{lin_3}.
This completes the proof of Lemma \ref{LEM:lin}.
\end{proof}

We conclude this section by presenting the proof of Lemma \ref{LEM:quadV}.

\begin{proof}[Proof of Lemma \ref{LEM:quadV}]
By Cauchy's inequality, we have
\begin{equation}
\int_{\T^3} (D^s \Dr)^2 Y \Dr  \leq c(\dl) \int_{\T^3} (D^s \Dr)^2 Y^2  + \dl \| D^s \Dr \Dr\|_{L^2}^2.
 \label{quadV_1}
\end{equation}

\noi
By H\"older's and Sobolev's inequalities
followed by 
the interpolation \eqref{interp}
with \eqref{embed} and~\eqref{alge}, we have
\begin{align}
\begin{split}
\int_{\T^3} (D^s \Dr)^2 Y^2 
&\les \| D^s \Dr \|_{L^3}^2 \| Y^2 \|_{L^3} \les \|  \Dr \|_{H^{s+ \frac 12}}^2 \| Y^2 \|_{ H^{\frac 12}} \\
& \les \|  \Dr \|_{H^{s+1}}^{2\g} \|  \Dr \|_{L^2}^{2(1-\g)} \| Y^2 \|_{\C^{\frac12 +\eps}} 
 \\
& \les \|  \Dr \|_{H^{s+1}}^{2\g} \|  \Dr \|_{L^4}^{2(1-\g)} \| D^s Y \|_{\C^{-\frac12 -\eps}}^2 
\end{split}
\label{quadV_2}
\end{align} 

\noi
for some $\g = \g(s) \in (0, 1)$, provided that $s > 1$ and $\eps > 0$ is sufficiently small.
Noting that $\frac{2\g}{2} + \frac{2(1-\g)}{4} < 1$, 
\eqref{quadV} follows from 
\eqref{quadV_1}, 
\eqref{quadV_2}, and 
Young's inequality.
\end{proof}

\section{Renormalized energy estimate} 
\label{SEC:ENERGY}

Recall from  \eqref{H8} that 
\begin{equation*}
\partial_t E_{s,N} (\pi_N \Phi_N(t) (\vec{u}) )\Big|_{t=0}  = F_1(\vec{u}_N) + F_2(\vec{u}_N) + F_3(\vec{u}_N), 	
\end{equation*}

\noi
where $\u_N = (u_N, v_N)$ and 
\begin{align*}
F_1(\vec{u}_N) &= 3 \int_{\T^3}  \DD  v_N  u_N,  \\
F_2(\vec{u}_N) &= \sum_{\substack{ |\al|+|\be|+|\g|=  s \\
|\al|,|\be|,|\g|<s
}}
c_{\al,\be,\gamma}
\int_{\T^3}
D^sv_N\cdot \dd^\al u_N \cdot \dd^\be u_N \cdot \dd^\g u_N , \\
F_3(\vec{u}_N) &= \bigg(\int_{\T^3}  u_N\bigg) \bigg( \int_{\T^3} v_N \bigg).
\end{align*}

\begin{proposition}
\label{PROP:energyestimate}
Let $s \geq 4$ be an even integer.  Then,
there exist $\s < s - \frac 12$ sufficiently close to $s - \frac 12$ and small $\eps >0$
such that 
\begin{align}
\bigg|\,\partial_t E_{s,N} (\pi_N \Phi_N(t) (\vec{u}) )\Big|_{t=0} \bigg|
\leq \big(1 +\| \vec{u}_N \|^2_{\H^\s}\big) F(\vec{u}_N) , 
\label{X1}
\end{align}

\noi
where
\begin{align*}
\begin{split}
F(\vec{u}_N)  = 1  &  + \| \DD \|_{\C^{-1-\varepsilon}} \\
& +
 \sup_{\substack{|k|=s-1 \\ |\alpha|=s}}\| \partial^\kk v_N \, \partial^\alpha u_N \|_{\C^{-1-\varepsilon}}
 + 
\sup_{\substack{|k|=s-1 \\ |\alpha| \leq s-1}}\| \partial^\kk v_N \, \partial^\alpha u_N \|_{\C^{- \frac 12 - \varepsilon}}.
\end{split}
\end{align*}

\end{proposition}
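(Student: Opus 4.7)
The plan is to decompose $\partial_t E_{s,N}(\pi_N\Phi_N(t)(\vec u))|_{t=0} = F_1 + F_2 + F_3$ as in the paragraph preceding the proposition and treat each piece separately, with the random-product estimates of Proposition~\ref{PROP:random_distr} providing the key probabilistic input. The simplest piece is $F_3 = (\int_{\T^3} u_N)(\int_{\T^3} v_N)$, which is bounded directly by the Cauchy--Schwarz inequality as $|F_3| \leq \|\vec u_N\|_{\H^0}^2 \leq \|\vec u_N\|_{\H^\s}^2$. For $F_1 = 3\int_{\T^3} Q_{s,N}(u_N)\, v_N u_N \, dx$, I would use the duality pairing~\eqref{dual} to place $Q_{s,N}(u_N)$ in $\C^{-1-\eps}$ against $v_N u_N$ in $\B^{1+\eps'}_{1,1}$; the fractional Leibniz rule~\eqref{prod} together with the Besov embeddings~\eqref{embed}--\eqref{emb_b} then yields $\|v_N u_N\|_{\B^{1+\eps'}_{1,1}} \les \|\vec u_N\|_{\H^\s}^2$ once $\s > 1 + \eps'$, generating the first nontrivial contribution to $F(\vec u_N)$.

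The bulk of the argument concerns $F_2$. Since $s$ is an even integer, $D^s = (-\Delta)^{s/2}$ is a linear combination of pure partial derivatives $\partial^{2\mu}$ with $|\mu| = s/2$, so one can integrate by parts once in $x$ to shift a single derivative from $D^sv_N$ onto the product $\partial^\alpha u_N \cdot \partial^\be u_N \cdot \partial^\g u_N$. Expanding with the Leibniz rule yields a finite sum of terms of the shape
\begin{align*}
\int_{\T^3} \partial^\kk v_N \cdot \partial^{\alpha'} u_N \cdot \partial^\be u_N \cdot \partial^\g u_N\, dx,
\end{align*}
with $|\kk| = s-1$, $|\alpha'| + |\be| + |\g| = s+1$, $|\alpha'| \leq s$, and $|\be|, |\g| \leq s-1$. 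I would then group $\partial^\kk v_N \cdot \partial^{\alpha'} u_N$ as the ``random'' factor controlled via Proposition~\ref{PROP:random_distr} and pair it by duality against the remaining factor $\partial^\be u_N \cdot \partial^\g u_N$ estimated deterministically through~\eqref{prod}.

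The analysis then splits into two cases depending on $|\alpha'|$. In \emph{Case A}, $|\alpha'| = s$ forces $|\be|+|\g|=1$; the random product lies in $\C^{-1-\eps}$ by~\eqref{rd2}, producing the second sup in $F(\vec u_N)$, and the smooth factor $\partial^\be u_N \cdot \partial^\g u_N$ trivially satisfies $\|\partial^\be u_N \partial^\g u_N\|_{\B^{1+\eps'}_{1,1}} \les \|\vec u_N\|_{\H^\s}^2$ by~\eqref{prod}. In \emph{Case B}, $|\alpha'| \leq s-1$, so the random product lies only in $\C^{-\frac12-\eps}$ by~\eqref{rd3}, giving the third sup in $F$, and one needs $\|\partial^\be u_N \partial^\g u_N\|_{\B^{1/2+\eps'}_{1,1}} \les \|\vec u_N\|_{\H^\s}^2$.

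I expect the main obstacle to lie in the worst sub-case of Case B, where $|\be| = s-1$ (so $|\g| \leq 2$) and $\partial^\be u_N$ belongs only to $H^{\s - s + 1} \approx H^{1/2 - \delta}$ for $\s$ just below $s - \tfrac12$. A symmetric application of the fractional Leibniz rule fails, and one is forced to place $\partial^\be u_N$ in $L^2$ (requiring $\s \geq s-1$) while placing $\partial^\g u_N$ in $\B^{1/2+\eps'}_{2,1}$ (requiring $\s > |\g| + \tfrac12$, i.e.\ $\s > \tfrac52$); both constraints are mutually compatible precisely because $s \geq 4$ makes the admissible window $\s \in [s-1, s - \tfrac12)$ non-empty. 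This is where the assumption $s \geq 4$ is used sharply, together with the evenness of $s$ that enabled the integration by parts. Collecting these bounds for all sub-cases of $F_2$ and combining them with the estimates for $F_1$ and $F_3$ yields the desired bound~\eqref{X1} with $F$ as in the statement.
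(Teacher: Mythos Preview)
Your overall architecture matches the paper's: split into $F_1,F_2,F_3$, integrate by parts once in $F_2$ to reduce $D^s v_N$ to $\partial^\kappa v_N$ with $|\kappa|=s-1$, group $\partial^\kappa v_N$ with one $u$-factor as the stochastic input from Proposition~\ref{PROP:random_distr}, and estimate the remaining two $u$-factors deterministically by duality and the fractional Leibniz rule. (A small slip: for $F_1$ you need $\sigma>2+\eps$, not $\sigma>1+\eps'$, since $\|v_N u_N\|_{B^{1+\eps'}_{1,1}}$ forces $v_N\in H^{1+\eps''}$; this is harmless for $s\geq 4$.)

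The genuine gap is in your Case~B worst sub-case. You fix the pairing $\partial^\kappa v_N\cdot\partial^{\alpha'}u_N$ with the factor $\alpha'$ that received the integrated derivative, and then face $|\beta|=s-1$. At that point you assert that one can bound $\|\partial^\beta u_N\,\partial^\gamma u_N\|_{B^{1/2+\eps'}_{1,1}}$ by placing $\partial^\beta u_N$ in $L^2$ and $\partial^\gamma u_N$ in $B^{1/2+\eps'}_{2,1}$. But the fractional Leibniz rule~\eqref{prod} is symmetric: it also produces the term $\|\partial^\beta u_N\|_{B^{1/2+\eps'}_{p_1,1}}\|\partial^\gamma u_N\|_{L^{p_2}}$, and no choice of $p_1$ (nor any paraproduct refinement) controls this when $\partial^\beta u_N$ has only $H^{\sigma-s+1}\subset H^{1/2-}$ regularity. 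The paraproduct $T_{\partial^\gamma u}\,\partial^\beta u$ already requires $\partial^\beta u_N$ in a Besov space of positive index $>\tfrac12$, which is unavailable for $\sigma<s-\tfrac12$. So this estimate fails as written, and your explanation of why $s\geq 4$ enters is accordingly off.

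The paper avoids this by a relabeling you omit: after integration by parts one has $|\kappa|=s-1$ and $|\alpha|+|\beta|+|\gamma|=s+1$ with $\max\leq s$, and \emph{then} one chooses to pair $\partial^\kappa v_N$ with the \emph{largest} of the three $u$-factors (WLOG $|\alpha|\geq|\beta|\geq|\gamma|$). This forces $|\beta|\leq\tfrac{s+1}{2}$ and $|\gamma|\leq\tfrac{s+1}{3}$, so that for $s>3$ both $\partial^\beta u_N,\partial^\gamma u_N\in H^{1/2+\eps}$ and the symmetric Leibniz rule goes through directly. Your argument becomes correct once you make the same choice: in Case~B all three $u$-orders are $\leq s-1$, so you are free to pair $\partial^\kappa v_N$ with the factor of highest order (still covered by~\eqref{rd3}), leaving the two lowest-order factors for the deterministic bound.
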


\begin{proof}
In the following, we prove \eqref{X1} uniformly in $N \in \N$.
Thus,  we drop the $N$-dependence and write $Q_s(u)$ for $\DD$.

First, note that  the estimate for $F_3$ follows trivially from Cauchy-Schwarz inequality. 
Next, we treat $F_1$.
By  duality \eqref{dual} and the fractional Leibniz rule \eqref{prod}, we have
\begin{align}
\begin{split}
\int_{\mathbb{T}^3} \DDD u v 
& \lesssim \| \DDD \|_{\mathcal{C}^{-1-\varepsilon}}
 \|  u  v \|_{\mathcal{B}^{1+\varepsilon}_{1,1}}	\\
& \lesssim \| \DDD \|_{\mathcal{C}^{-1-\varepsilon}} \| u \|_{H^\sigma} \| v \|_{H^{\sigma - 1}}, 
\end{split}
\label{est0}
\end{align}

\noi
provided that $\s > 2 + \eps$.
This is guaranteed by choosing $\s$ sufficiently close to $s - \frac 12$, when $s > \frac 52$.

It remains to consider $F_2$.
By integration by parts, it suffices to  consider terms of the form:
\begin{equation*}
\int_{\mathbb{T}^3} \partial^\kk v \, \partial^\alpha u \, \partial^\beta u \, \partial^\gamma u,
\end{equation*}

\noi
where $|\kk| = s-1$, $\max(\alpha, \beta, \gamma) \leq s$,
 and $|\alpha| + |\beta| + |\gamma| = s+1$. Without loss of generality, we assume 
 that $|\alpha| \geq |\beta| \geq |\gamma|$. 
The idea is to group the low regularity terms ($\partial^\kk v$ and $\partial^\alpha u$) and treat them as one piece.

First, let us assume that $|\alpha|=s$.
In this case, we have  $|\beta| = 1$ and $|\gamma|=0$.
By duality~\eqref{dual} and the fractional Leibniz rule \eqref{prod}, we have
\begin{align} 
\bigg|\int_{\mathbb{T}^3} \partial^\kk v  \partial^\alpha u \, \partial u  \, u \bigg|
\lesssim \| \partial^\kk v \, \partial^\alpha u \|_{\mathcal{C}^{-1-\varepsilon}} \| \partial u \, u \|_{\mathcal{B}^{1+\varepsilon}_{1,1}} \lesssim \| \partial^\kk v \, \partial^\alpha u \|_{\mathcal{C}^{-1-\varepsilon}} \| u \|_{H^{\sigma}}^2,
\label{est1}
\end{align}

\noi
provided that $\s > 2 +\eps$.
By choosing $\eps > 0$ sufficiently small, 
we can guarantee this condition if  $s > \frac{5}{2}$.

This leaves the case $|\alpha| \leq s-1$.
Noting  that $|\beta| \leq \frac{s+1}{2}$ and $|\gamma| \leq \frac{s+1}{3}$
(under $|\alpha| \geq |\beta| \geq |\gamma|$), 
we see that  $\partial^\beta u, \partial^\gamma u \in H^{\frac{1}{2}+\varepsilon}(\T^3)$
for $s > 3$.
Thus, by duality \eqref{dual} and the fractional Leibniz rule~\eqref{prod}, we have:
\begin{equation} \label{est2}
\bigg|\int_{\mathbb{T}^3} \partial^\kk v \, \partial^\alpha u \, \partial^\beta u \dd^\g  u \bigg|
\lesssim \| \partial^\kk v \, \partial^\alpha u \|_{\mathcal{C}^{-\frac{1}{2}-\varepsilon}} \| \partial^\beta u \, u \|_{\mathcal{B}^{\frac{1}{2}+\varepsilon}_{1,1}} \lesssim \| \partial^\kk v \, \partial^\alpha u \|_{\mathcal{C}^{-\frac{1}{2}-\varepsilon}} \| u \|_{H^{\sigma}}^2.
\end{equation}

\noi
This completes the proof of Proposition \ref{PROP:energyestimate}.
\end{proof}

\begin{remark}\rm
The restriction $s > 3$ in the last case appears only when $|\be| = \frac{s+1}{2}$.
In fact,  when $|\be| \leq \frac s2$, 
the estimate \eqref{est2} holds true for $s > 2$.
On the other hand, when $|\be| = \frac{s+1}{2}$, 
we must have $|\al| = |\be| = \frac{s+1}{2}$.
In this case, by applying dyadic decompositions
and working with the Littlewood-Paley pieces $\P_{j_2} \dd^\al u\, \P_{j_3} \dd^\be u$, 
we can move half a derivative from the third factor to the second factor,
thus showing that a slight variant of \eqref{est2} holds for $s > 2$.
Therefore, the estimates \eqref{est0} and \eqref{est1}  on $F_1$ and $F_2$
impose the regularity restriction~$s > \frac 52$.

\end{remark}

Finally, we conclude this paper 
by establishing 
the renormalized energy estimate (Proposition \ref{PROP:Nenergy}).

\begin{proof}[Proof of Proposition \ref{PROP:Nenergy}]
The renormalized energy estimate \eqref{XX1}
 follows from Proposition~\ref{PROP:energyestimate}, the cutoff in the $\H^\s$-norm, and 
Proposition  \ref{PROP:random_distr}
 with \eqref{XX2}, 
 controlling $F(\vec{u}_N)$.
\end{proof}

\begin{ackno}\rm

T.S.G.~was supported by ESPRC as part of the MASDOC CDT at the University of Warwick, Grant No.~EP/HO23364/1. 
T.O.~was supported by the European Research Council (grant no.~637995 ``ProbDynDispEq''
and grant no.~864138 ``SingStochDispDyn"). 
N.T.~was supported by the ANR grant ODA (ANR-18-CE40-0020-01).
H.W.~was supported by the Royal Society through the University Research Fellowship UF140187.
T.S.G.~and H.W.~would like to thank the Isaac Newton Institute for Mathematical Sciences for support and hospitality during the program: ``Scaling limits, rough paths, quantum field theory'' when work on this paper was undertaken.  
This work was supported by EPSRC Grant Number EP/R014604/1.

\end{ackno}

\end{document}